\documentclass[12pt]{amsart}
\usepackage[utf8]{inputenc}
\usepackage{setspace}
\usepackage[english]{babel}
\usepackage{mathabx}
\usepackage{framed}
\usepackage{soul}
\usepackage[all]{xy}
\usepackage[margin=1in]{geometry}
\usepackage{arydshln}
\usepackage[colorinlistoftodos]{todonotes}
\usepackage{pb-diagram}
\usepackage[mathscr]{euscript}
\usepackage{multicol}
\usepackage{stmaryrd} 
\usepackage{empheq} 
\usepackage{tikz-cd}
\tikzset{
  symbol/.style={
    draw=none,
    every to/.append style={
      edge node={node [sloped, allow upside down, auto=false]{$#1$}}}
  }
}

\usepackage{amsmath,amsfonts,amssymb,amsthm,mathrsfs,latexsym,mathtools}
\usepackage{commath}
\usepackage[T1]{fontenc}
\usepackage{hyperref}
\usepackage{caption} 
\usepackage{subcaption} 

\DeclareMathAlphabet{\mathbbmsl}{U}{bbm}{m}{sl}

\usetikzlibrary{matrix}

\newcommand{\C}{\mathbb{C}} 
\newcommand{\Z}{\mathbb{Z}}
\newcommand{\R}{\mathbb{R}}

\newcommand{\bbQ}{\mathbb{Q}}

\newcommand{\disc}{\mathrm{disc}}

\newcommand{\prim}{\textup{prim}}

\newcommand{\Pic}{\textup{Pic}}

\newcommand {\OO} {{\mathcal O}}
\newcommand{\PP} {\mathbb{P}}
\newcommand{\cS}{\mathcal{S}}
\renewcommand{\S}{\cS}
\newcommand{\E}{\mathcal{E}}
\newcommand{\Kum}{\textup{Kum}}

\newcommand{\cF}{\mathcal{F}}
 
\newcommand{\cC}{\mathcal{C}}
\newcommand{\cA}{\mathcal{A}} 

\newcommand{\cX}{\mathcal{X}}

\newcommand{\cH}{\mathcal{H}}

\newcommand {\cY} {\mathcal{Y}}

\newcommand {\Q} {{\mathbb Q}}

\newcommand{\sat}{\mathrm{sat}}

\usepackage{amscd} 

\usepackage{enumitem} 
\usepackage{comment}

\newtheorem{theorem}{Theorem}[section]
\newtheorem{definition}[theorem]{Definition}
\newtheorem{proposition}[theorem]{Proposition}
\newtheorem{corollary}[theorem]{Corollary}
\newtheorem{question}[theorem]{Question}
\newtheorem{example}[theorem]{Example}
\newtheorem{lemma}[theorem]{Lemma}

\newtheorem{remark}[theorem]{Remark}

\newtheorem{notation}[theorem]{Notation}

\newtheorem{thm}{Theorem}

\tikzset{commutative diagrams/.cd,
mysymbol/.style={start anchor=center,end anchor=center,draw=none}
}

\title{Failure of the Invariant Cycle Theorem over $\Z$}

\author{Donu Arapura}
\address{Department of Mathematics, Purdue University, 150 N. University Street, West Lafayette, IN 47907, U.S.A.}
\email{arapura@purdue.edu}
\thanks{The first author was partially supported by a grant from the Simons Foundation, and the second author by the NSF grant DMS-2302548}

\author{Fran\c{c}ois Greer}
\address{Department of Mathematics, Michigan State University,
619 Red Cedar Road, East Lansing, MI 48824, U.S.A.}
\email{greerfra@msu.edu}

\author{Yilong Zhang}
\address{Department of Mathematics, University of Georgia,
110 Carlton St, Athens, GA 30602, U.S.A.}
\email{yilong.zhang@uga.edu}

\date{May 13, 2026}
\subjclass[2020]{14J27, 14D06 primary, 14J28, 14C30, 32G20 secondary}
\begin{document}
\maketitle
\begin{abstract}
We initiate a study of the local invariant cycle theorem with integral coefficients for 1-parameter semistable families of varieties. We show that it always holds for $H^1$, and it holds for $H^2$ if the general fiber has trivial Albanese variety. The latter generalizes results of Friedman, Griffiths, and Scattone on K3 surfaces and I-surfaces.

We construct the first example of a semistable family which fails the local (and global) invariant cycle theorems with integral coefficients. The family has constant period map associated to $H^2$, and its smooth fibers are algebraic surfaces with $p_g=q=1$; in particular, they have non-trivial Albanese varieties. 
The surfaces in the family have maximal Picard rank and minimal discriminant, and they are closely related to Vinberg's most algebraic K3 surface. 
Our construction also generalizes the Shioda--Inose construction for rational double covers of K3 surfaces.
\end{abstract}

\setcounter{tocdepth}{1} 
\tableofcontents

\section{Introduction}

The classical local invariant cycle theorem, stated below, was conjectured by Griffiths \cite{Griffiths} in 1970 and proved by Clemens and Schmid shortly thereafter \cite{Cle77}\cite[pp 121-122]{GriffithsSchmid}.
Professor Griffiths kindly informed us that many of these ideas were discussed in a seminar at Berkeley in the 1960's. A different proof was found by Deligne \cite[Th\'eor\`eme 3.6]{Deligne-WeilII}. Both proofs are non-trivial; the first relies on the existence of limit mixed Hodge structures and the second on the Weil conjectures.

\begin{theorem}[LICT$_\bbQ$]\label{ICTQ}
    Let $f:\mathcal{X}\to \Delta$ be a flat family of complex projective varieties over a  disk. Suppose $\mathcal{X}$ is smooth and $f$ is smooth over the punctured disk $\Delta^*$. Let $X_t$ be a general smooth fiber, then for each $n$, the restriction map 
$$H^n(\mathcal{X},\bbQ)\twoheadrightarrow H^n(X_t,\bbQ)^{\pi_1(\Delta^*)}$$
    surjects onto the invariant part.
\end{theorem}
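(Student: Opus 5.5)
The plan is to follow the Clemens--Schmid route, using the limit mixed Hodge structure. First I reduce to the semistable case. By the semistable reduction theorem, after a base change $t\mapsto t^m$ and a resolution we get a semistable family $f':\mathcal{X}'\to\Delta$, with reduced normal crossing central fiber $X_0'$. Its monodromy $T^m$ is unipotent. The monodromy invariants only grow under this base change, since $H^n(X_t)^{T}\subseteq H^n(X_t)^{T^m}$. So surjectivity for $\mathcal{X}'$ gives every $T$-invariant class as a restriction of a class $\alpha'\in H^n(\mathcal{X}',\bbQ)$. To descend $\alpha'$ to $\mathcal{X}$, I use the (generically finite) correspondence $\mathcal{X}'\to\mathcal{X}\times_\Delta\Delta'$ together with the Galois group $\mu_m$. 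Averaging over $\mu_m$ and pushing forward via the trace on rational cohomology produces a class on a resolution of $\mathcal{X}$. Since $\mathcal{X}$ is smooth, the decomposition theorem (or Poincaré duality and pushforward for the proper map from the resolution) then returns a class on $\mathcal{X}$ itself. Rational coefficients are essential here, because we divide by $m$ and by degrees.

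In the semistable case, $\mathcal{X}$ retracts onto $X_0$, so $H^n(\mathcal{X})\cong H^n(X_0)$. The specialization map $H^n(X_0)\to H^n(X_\infty)$ is a morphism of mixed Hodge structures, where $X_\infty$ carries Schmid--Steenbrink's limit MHS. Here $X_0$ gets Deligne's MHS, computed by the weight spectral sequence of the components $X_0^{[k]}$. The invariants are $\ker N$, where $N=\log T$, and $N$ is a morphism $H^n(X_\infty)\to H^n(X_\infty)(-1)$. The key step is exactness of
\[H^n(X_0)\to H^n(X_\infty)\xrightarrow{N} H^n(X_\infty)(-1).\]
I would prove it weight by weight. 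The weight filtration on $H^n(X_\infty)$ is the monodromy filtration centered at $n$, and it is computed by Steenbrink's double complex built from the same $E_1$ terms $H^*(X_0^{[k]})$. Hence $\ker N$ is concentrated in weights $\le n$. I would then identify $\mathrm{Gr}^W_{n-k}\ker N$ with the primitive part $P$ for the $N$-Lefschetz decomposition. Comparing the two $E_2$-degenerate weight spectral sequences shows that the image of $\mathrm{Gr}^W H^n(X_0)$ hits exactly these pieces.

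The main obstacle is this last comparison. It needs the $E_2$-degeneration of both spectral sequences, which comes from the Hodge theory of the smooth projective components. It also needs the relative hard Lefschetz property: $N^k:\mathrm{Gr}^W_{n+k}\cong\mathrm{Gr}^W_{n-k}$, and its compatibility with the Gysin/restriction differentials. If the direct comparison becomes messy, I would instead deduce exactness from the full Clemens--Schmid sequence. That sequence follows from the Wang sequence of $\mathcal{X}\setminus X_0\to\Delta^*$ combined with the local cohomology sequence $H^n_{X_0}(\mathcal{X})\to H^n(\mathcal{X})\to H^n(\mathcal{X}\setminus X_0)$. Duality gives $H^n_{X_0}(\mathcal{X})\cong H_{2d+2-n}(X_0)$. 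One then uses the weight argument only to show that the composite $H^n(X_\infty)\to H^{n+1}_{X_0}(\mathcal{X})$ vanishes on $\ker N$, by strictness of morphisms of MHS.
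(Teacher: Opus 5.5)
Your outline is correct, and its core is the same Clemens--Schmid/limit-MHS argument that the paper invokes for semistable families. The paper itself proves nothing here: it cites Clemens--Schmid, Deligne, and, for a general smooth total space, the decomposition theorem.

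The one real difference is how you handle a non-semistable central fiber. You use semistable reduction plus a transfer, where the paper points to the decomposition theorem. Your reduction works and is more elementary than you make it; your parenthetical alternative is the right one. The map $\phi:\mathcal{X}'\to\mathcal{X}$ is proper, both spaces are complex manifolds, and over $\Delta^*$ it is the $m$-sheeted cyclic cover. So the Gysin map $\phi_*$, defined by Poincar\'e--Borel--Moore duality, gives
\[
\phi_*\alpha'|_{X_t}=\sum_{k=0}^{m-1}T^k\beta=m\beta .
\]
The duality step is exactly where smoothness of $\mathcal{X}$ enters. No resolution of $\mathcal{X}$, no $\mu_m$-action on $\mathcal{X}'$, and no decomposition theorem are needed.

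In your fallback route, there is no map $H^n(X_\infty)\to H^{n+1}_{X_0}(\mathcal{X})$ as such. The precise version uses Steenbrink's MHS on $H^n(\mathcal{X}^*)$:
\begin{itemize}
\item With this MHS the Wang sequence is a sequence of MHS. Its left term, $H^{n-1}(X_\infty)(-1)/\mathrm{im}\,N$, has weights $\ge n+1$.
\item Since $\ker N\subseteq W_n$, exactness of $\mathrm{Gr}^W$ shows that $W_nH^n(\mathcal{X}^*)$ maps isomorphically onto $\ker N$.
\item On the other side, $H^{n+1}_{X_0}(\mathcal{X})\cong H_{2d+1-n}(X_0)(-d-1)$ has weights $\ge n+1$. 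So $\delta$ kills these canonical lifts, and each one comes from $H^n(\mathcal{X})$.
\end{itemize}
This is cleaner than your primary weight-by-weight comparison of the two spectral sequences.

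Both of your routes ultimately rest on the deep fact that the limit weight filtration is the monodromy filtration centered at $n$. The decomposition-theorem route the paper mentions gives the general case, and a stronger statement, in one stroke, at the cost of heavier machinery.
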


For semistable families, i.e., where the central fiber $X_0$ is a reduced normal crossing divisor and the total space is smooth, the theorem follows from the exactness of the Clemens--Schmid sequence \cite{Cle77}:
\begin{equation}\label{intro_eqn_CS-sequence}
    H^n(X_0,\bbQ)\to H^n(X_t,\bbQ)\xrightarrow{N} H^n(X_t,\bbQ),
\end{equation}
where $N=\log(T)$ is the logarithm of the monodromy operator $T$, and $\ker (N)=\ker(T-I)$ is the invariant part $H^n(X_t,\bbQ)^{\pi_1(\Delta^*)}$. More generally, a much stronger form of  Theorem \ref{ICTQ} follows from the decomposition theorem \cite[Theorem 6.2.5]{BBD82}.

However, all the previous arguments only work rationally. A natural question to ask is
\begin{question}[\textup{LICT}$_\Z$]\label{question_ICTZ}
  Let $f:\mathcal{X}\to \Delta$ be a semistable family.  Does the local invariant theorem still hold with integral coefficients? In other words, is the restriction map  
    $$r:H^n(\mathcal{X},\Z)\to H^n(X_t,\Z)^{\pi_1(\Delta^*)}$$ surjective?
\end{question}
Note that the semistability assumption is necessary. Without it, an easy counterexample can be constructed as follows. Let $\mathcal{X}\to \Delta$ be a local elliptic surface with a multiple fiber of multiplicity $m>1$ over $0$. Any class in the image of $H^2(\mathcal X,\Z)\to H^2(X_t,\Z)$ must be divisible by $m$. In particular, the generator of $H^2(X_t,\Z)$, which is invariant, cannot lift to $H^2(\mathcal{X},\Z)$. One can preclude this sort of counterexample by working with semistable families. 

\subsection{Positive results}
Question \ref{question_ICTZ} is equivalent to asking whether the Clemens--Schmid sequence \eqref{intro_eqn_CS-sequence} is exact over $\Z$.
This was first considered by Friedman and Scattone while studying degenerations of K3 surfaces \cite{Fri84,FriSca86}. They showed that \textup{LICT}$_\Z$ holds for semistable families of K3 surfaces, and more recently Friedman and Griffiths obtained a similar result for I-surfaces \cite{FriGri24}.

We generalize their results to optimal form in low cohomological degrees:
\begin{theorem}[\textup{LICT}$_\Z$ for $H^1$, Theorem \ref{ssh1}]\label{intro_thm_H^1} Let $f:\mathcal{X}\to \Delta$ be a semistable family of complex projective varieties over a disk.  Then the \textup{LICT}$_\Z$ holds for $H^1$.

\end{theorem}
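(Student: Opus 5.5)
The plan is to dualize to homology and use the rational statement, Theorem \ref{ICTQ}, to handle torsion. By the universal coefficient theorem, $H^1(Y,\Z)=\mathrm{Hom}(H_1(Y,\Z),\Z)$ for any reasonable space $Y$, so $H^1$ has no torsion and no $\mathrm{Ext}$ term enters. The invariant part is
$$H^1(X_t,\Z)^{\pi_1(\Delta^*)}=\mathrm{Hom}\bigl(H_1(X_t,\Z)_T,\Z\bigr),$$
where $H_1(X_t,\Z)_T$ denotes the coinvariants of the monodromy $T$. The map $i_*:H_1(X_t,\Z)\to H_1(\mathcal X,\Z)$ factors through $H_1(X_t,\Z)_T$, since $T$ is realized by parallel transport inside $\mathcal X^*=\mathcal X\setminus X_0$. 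It therefore suffices to prove two things. First, the induced map $A:=H_1(X_t,\Z)_T\to B:=H_1(\mathcal X,\Z)$ is surjective. Second, every $T$-invariant homomorphism $\varphi:H_1(X_t,\Z)\to\Z$ kills $K=\ker(A\to B)$. Given both, $\varphi$ descends to $A/K=B$, i.e.\ it is the restriction of a class in $H^1(\mathcal X,\Z)$.

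For surjectivity I use the geometry of the semistable family. Since $X_0$ is a divisor in the smooth (real codimension two) manifold $\mathcal X$, the map $\pi_1(\mathcal X^*)\to\pi_1(\mathcal X)$ is surjective, and its kernel is normally generated by meridian loops $\gamma_i$ around the components $D_i$ of $X_0$. Next, $f:\mathcal X^*\to\Delta^*$ is a fiber bundle with fiber $X_t$. Its homotopy sequence shows that $\pi_1(\mathcal X^*)$ is generated by the image of $\pi_1(X_t)$ together with any single loop mapping to a generator of $\pi_1(\Delta^*)\cong\Z$. Because $X_0$ is reduced, $f$ vanishes to order exactly one along each $D_i$, so $f_*\gamma_1$ is such a generator. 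Hence $\pi_1(\mathcal X^*)$ is generated by $\pi_1(X_t)$ and $\gamma_1$. Killing $\gamma_1$ shows that $\pi_1(X_t)\to\pi_1(\mathcal X)$ is surjective, and after abelianizing, $A\to B$ is surjective. This is the step where semistability, specifically reducedness of the central fiber, is used. With a multiple fiber the meridian maps to $m$ times a generator, and the argument breaks, consistent with the elliptic example above.

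For the kernel, let $\varphi\in\mathrm{Hom}(A,\Z)$. By Theorem \ref{ICTQ}, $\varphi\otimes\Q$ lies in the image of $H^1(\mathcal X,\Q)=\mathrm{Hom}(B,\Q)$. Clearing denominators, some multiple $n\varphi$ with $n\ge1$ lies in the image of $\mathrm{Hom}(B,\Z)$, so $n\varphi$ vanishes on $K$. Since $\Z$ is torsion-free, $\varphi$ itself vanishes on $K$. Thus $\varphi$ factors through $B$ and lifts to $H^1(\mathcal X,\Z)$, which proves the theorem; here $H^1(\mathcal X,\Z)\cong H^1(X_0,\Z)$ via the retraction of $\mathcal X$ onto $X_0$.

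The only real obstacle is the surjectivity step. One has to check that the fundamental-group statements hold for the possibly non-compact total space over a disk: the retraction onto $X_0$ and the fibration structure over $\Delta^*$ after shrinking the disk. One also has to check that a meridian to a reduced component really maps to a generator of $\pi_1(\Delta^*)$. I would verify the latter in local coordinates $f=z_1\cdots z_k$ near a general point of $D_1$, where $k=1$ and $f=z_1$.
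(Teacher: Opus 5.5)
Your argument is correct, and it takes a genuinely different and much shorter route than the paper. The paper proceeds in three stages. First, it proves integral exactness for projective Mumford degenerations of abelian varieties, by comparing the Leray sequence of the torus fibration $A_t\to N_\R/N'\times B_0$ with the Mayer--Vietoris description of $H^1(A_0^\Sigma,\Z)$. Second, it transfers this to semistable curves through a resolved relative Abel--Jacobi map. Third, it reduces arbitrary relative dimension to curves by a Lefschetz hyperplane argument that combines \textup{LICT}$_\Q$ with torsion-freeness of $H^2(X_0,Y_0;\Z)$.

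You instead observe that $H^1=\mathrm{Hom}(H_1,\Z)$ carries no $\mathrm{Ext}$ term. Hence the integral statement follows formally from \textup{LICT}$_\Q$ once $H_1(X_t,\Z)\to H_1(\mathcal X,\Z)$ is onto. You get that surjectivity from $\pi_1(X_t)\twoheadrightarrow\pi_1(\mathcal X)$, using only a multiplicity-one component of $X_0$. Clearing denominators is not even needed: if $\psi\circ i_*=\varphi$ with $\psi$ rational and $i_*$ surjective, then $\psi$ is automatically $\Z$-valued.

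Your route buys brevity and generality. It needs no SNC hypothesis, no hyperplane sections, and no degeneration theory of abelian varieties. It requires only the hypotheses of \textup{LICT}$_\Q$ and a central fiber whose multiplicities have $\gcd$ one, since a product of meridians then maps to a generator of $\pi_1(\Delta^*)$. It also gives injectivity of $H^1(X_0,\Z)\to H^1(X_t,\Z)$ for free by dualizing a surjection. Finally, it makes transparent why degree one is special and why the multiple-fiber example fails.

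The paper's route buys structural information that yours does not provide. It gives an explicit integral description of $H^1(X_0,\Z)$ through the dual complex and the abelian part $B_0$, matched with the weight filtration of the limit mixed Hodge structure.
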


\begin{theorem}[\textup{LICT}$_\Z$ for $H^2$, Theorem \ref{thm_H^2}]\label{intro_thm_H^2}
     Let $f:\mathcal{X}\to \Delta$ be a semistable family of complex projective varieties over a disk. If the general fiber $X_t$ has trivial Albanese, or equivalently $q(X_t)=0$, then \textup{LICT}$_\Z$ holds for $H^2$.
\end{theorem}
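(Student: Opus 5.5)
The plan is to replace the restriction map by a purely topological extension problem on the complement $\mathcal{X}^*=\mathcal{X}\setminus X_0=f^{-1}(\Delta^*)$. Then I show the obstruction to extending classes over $X_0$ is simultaneously torsion, by Theorem \ref{ICTQ}, and lies in a torsion-free group, by the semistable structure of $X_0$. Throughout I shrink $\Delta$ so that $\mathcal{X}$ deformation retracts onto $X_0$, and I write $r$ for the map of Question \ref{question_ICTZ}.

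\emph{Step 1: invariant classes lift to $\mathcal{X}^*$.} Since $\mathcal{X}^*\to\Delta^*$ is a fibre bundle over a space homotopic to a circle, the Wang sequence gives
\[
0\to H^1(X_t,\Z)_{T}\to H^2(\mathcal{X}^*,\Z)\to H^2(X_t,\Z)^{T}\to 0 .
\]
The hypothesis $q(X_t)=0$ gives $H^1(X_t,\Q)=0$. Since $H^1$ of a space with integer coefficients is always torsion-free, $H^1(X_t,\Z)=0$. Hence restriction gives an isomorphism $H^2(\mathcal{X}^*,\Z)\cong H^2(X_t,\Z)^{T}$, and the map $r$ factors through it. So it suffices to show that $H^2(\mathcal{X},\Z)\to H^2(\mathcal{X}^*,\Z)$ is surjective.

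\emph{Step 2: the obstruction is torsion.} The long exact sequence of the pair, in local cohomology form, reads
\[
H^2(\mathcal{X},\Z)\to H^2(\mathcal{X}^*,\Z)\xrightarrow{\ \delta\ } H^3_{X_0}(\mathcal{X},\Z)\to H^3(\mathcal{X},\Z).
\]
The cokernel we care about is therefore $\mathrm{im}(\delta)$. Tensoring with $\Q$ and using Theorem \ref{ICTQ} together with Step 1, the map $H^2(\mathcal{X},\Q)\to H^2(\mathcal{X}^*,\Q)$ is surjective. Consequently $\mathrm{im}(\delta)$ is a finitely generated torsion group.

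\emph{Step 3: $H^3_{X_0}(\mathcal{X},\Z)$ is torsion-free.} This is the step that uses semistability, and I expect it to be the main obstacle. Write $X_0=\bigcup X_i$ with smooth components meeting transversally in $\mathcal{X}$, and set $X_{ij}=X_i\cap X_j$ and so on. I would use the Mayer--Vietoris spectral sequence for cohomology with supports in a union of closed sets,
\[
E_1^{-p,q}=\bigoplus_{|I|=p+1} H^{q}_{X_I}(\mathcal{X},\Z)\ \Rightarrow\ H^{q-p}_{X_0}(\mathcal{X},\Z),
\]
and combine it with the Thom isomorphisms $H^{q}_{X_I}(\mathcal{X})\cong H^{q-2|I|}(X_I)$, valid because each $X_I$ is a smooth submanifold of complex codimension $|I|$.

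In total degree $3$, the only potentially nonzero term is $\bigoplus_i H^3_{X_i}(\mathcal{X})\cong\bigoplus_i H^1(X_i,\Z)$. The double-intersection contribution would be $H^4_{X_{ij}}\cong H^0(X_{ij})$, which sits in total degree $2$, and the other terms vanish for degree reasons. The differentials leaving $\bigoplus_i H^1(X_i)$ land in terms with $q=3$ and larger $|I|$, which vanish, and $H^2_{X_{ij}}=0$ in the relevant spot. So $H^3_{X_0}(\mathcal{X},\Z)$ is a subgroup of a subquotient, in fact a subgroup, of $\bigoplus_i H^1(X_i,\Z)$, which is torsion-free.

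The delicate part is bookkeeping the spectral sequence carefully enough to see that no torsion enters from the other terms. If that becomes messy, I would instead argue by induction on the number of components, using the Mayer--Vietoris long exact sequence for supports $X_0'\cup X_i$.

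\emph{Conclusion.} Combining Steps 2 and 3, $\mathrm{im}(\delta)$ is a torsion subgroup of a torsion-free group, so it vanishes. Hence $H^2(\mathcal{X},\Z)\to H^2(\mathcal{X}^*,\Z)\cong H^2(X_t,\Z)^{\pi_1(\Delta^*)}$ is surjective, which gives \textup{LICT}$_\Z$ for $H^2$.
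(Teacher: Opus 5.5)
Your skeleton is the paper's Proposition~\ref{prop_torsionX0} with $k=2$; note that $H^3_{X_0}(\mathcal{X},\Z)=H^3(\mathcal{X},\mathcal{X}^*;\Z)$. The ingredients are the same: the Wang sequence, the sequence of the pair, torsion of the obstruction from LICT$_\Q$, and torsion-freeness of the target group. Steps 1 and 2 are correct. Step 1 is even a bit cleaner than the paper's element chase, since $H^1(X_t,\Z)=0$ on the nose.

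Step 3, however, has a bookkeeping error. In your indexing, $E_1^{-1,4}=\bigoplus_{i<j}H^4_{X_{ij}}(\mathcal{X},\Z)\cong\bigoplus_{i<j}H^0(X_{ij},\Z)$ has total degree $4-1=3$, not $2$. Compare the two-set sequence $H^3_{X_1}\oplus H^3_{X_2}\to H^3_{X_1\cup X_2}\to H^4_{X_1\cap X_2}\to H^4_{X_1}\oplus H^4_{X_2}$. So your claim that $H^3_{X_0}(\mathcal{X},\Z)$ sits inside $\bigoplus_i H^1(X_i,\Z)$ is unjustified. Since Step 3 never uses $q(X_t)=0$, the claim is in fact false in the generality in which you argue it. For a semistable degeneration of elliptic curves to a cycle of $m\ge 2$ rational curves, $H^3_{X_0}(\mathcal{X},\Z)\cong H_1(X_0,\Z)\cong\Z$, while every $H^1(X_i,\Z)=0$.

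The repair is short. In total degree $3$ the only nonzero $E_1$-terms are $E_1^{0,3}$ and $E_1^{-1,4}$. The terms with $|I|=p+1\ge 3$ are $H^{1-p}(X_I)$ with $p\ge 2$, hence zero. No differential hits $E^{0,3}$ or $E^{-1,4}$, because by the Thom isomorphism the possible sources are cohomology of the $X_I$ in negative degrees. The only nonzero differential leaving these two positions is the Gysin map $d_1$ out of $E^{-1,4}$. Hence
\[
0\to\bigoplus_i H^1(X_i,\Z)\to H^3_{X_0}(\mathcal{X},\Z)\to\ker\Big(\bigoplus_{i<j}H^0(X_{ij},\Z)\xrightarrow{\ \mathrm{Gysin}\ }\bigoplus_i H^2(X_i,\Z)\Big)\to 0 .
\]
This is an extension of a torsion-free group by a torsion-free group, so $H^3_{X_0}(\mathcal{X},\Z)$ is torsion-free and your conclusion follows. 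Your fallback induction on the number of components also works, for the same reason.

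For comparison, the paper avoids this bookkeeping altogether. It identifies the obstruction group with $H_{2n-1}(X_0,\Z)$, where $n=\dim X_t$, using excision, Lefschetz duality and the retraction onto $X_0$. It then uses the universal coefficient theorem to transfer the torsion to $H^{2n}(X_0,\Z)$, which Lemma~\ref{lemma_degMV} shows is free on the components. That is a one-term spectral sequence computation with no differentials to track. Your route is the Poincar\'e--Lefschetz dual computation. It costs a Thom isomorphism for every stratum and more care, but it gives an explicit description of the obstruction group, not just its torsion-freeness.
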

In particular, Theorem \ref{intro_thm_H^2} applies to all projective hypersurfaces, Calabi-Yau varieties, hyperk\"ahler varieties, as well as their finite quotients. Note that it also applies to torsion classes in $H^2$, which appear for instance in Enriques surfaces and Godeaux surfaces.

To prove the result for $H^1$, we first prove LICT$_{\Z}$ for a semistable family of abelian varieties, using the Mumford construction and toric moment maps. This leads to LICT$_{\Z}$ for any semistable family of curves by passing to the relative compactified Jacobian. Lastly, we use a relative hyperplane section theorem to reduce to the case of curves (cf. Theorem \ref{thm_reduction-hyp}). To prove the result for $H^2$, we use the fact that the Clemens--Schmid sequence is a slicing of the Wang sequence and the exact sequence of the pair $(\mathcal X,\mathcal X\smallsetminus X_0)$, both of which are exact over $\Z$.



\subsection{Negative results}


Next, we answer Question \ref{question_ICTZ} in the negative for surfaces with $q=1$, showing that Theorem \ref{intro_thm_H^2} is sharp.

\begin{theorem}\label{main_thm}
The local invariant cycle ``theorem'' ($\textup{LICT}_\Z$) fails over $\Z$ for a semistable family. More precisely,
    there exists a semistable degeneration $\mathcal{X} \to \Delta$ of surfaces with $p_g=q=1$ and torsion-free cohomology, such that the restriction map 
    $$H^2(\mathcal{X},\Z)\to H^2(X_t, \Z)^{\pi_1(\Delta^*)}$$
    is not surjective.
\end{theorem}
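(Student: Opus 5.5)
The plan is to build the counterexample explicitly, using the abstract as a guide. We take surfaces with $p_g=q=1$ whose Albanese map is an elliptic fibration onto an elliptic curve $E$, or a double-cover construction of Shioda--Inose type related to Vinberg's most algebraic K3 surface. The family should have constant period map on $H^2$. The only varying part is then the $H^1$ piece coming from the Albanese curve, and we let that elliptic curve $E_t$ degenerate semistably to a nodal cycle (type $I_n$).

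Concretely, we start from a fixed surface $S$ with an involution, or a fixed K3 $Y$ of maximal Picard rank $20$ and minimal discriminant. We form $X_t$ as a quotient or double cover of $Y\times E_t$, or a fiber product over $\PP^1$. Then $H^2(X_t)$ decomposes rationally as a constant part plus $H^1(E_t)\otimes H^1(\text{something fixed})$. After semistable reduction of $E_t\to\Delta$ (Tate curve, base change, resolving the total space toroidally), we get a semistable $\mathcal X\to\Delta$. We must verify smoothness of the total space and that the central fiber is reduced normal crossings. We also compute that $H^*(X_t,\Z)$ is torsion-free, via the Künneth and transfer description of the quotient.

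The key computation compares two lattices. The first is the invariant lattice $H^2(X_t,\Z)^{T}$. The second is the image of $H^2(\mathcal X,\Z)\cong H^2(X_0,\Z)$, computed from the Mayer--Vietoris spectral sequence of the normal crossing central fiber. Rationally they agree by Theorem \ref{ICTQ}. Integrally we will show the image has finite index $>1$. The mechanism is that the monodromy acts on the $H^1(E_t)\otimes H^1$ part as $\begin{pmatrix}1&n\\0&1\end{pmatrix}\otimes 1$. In the quotient construction, the invariant integral classes include a "half" class: a class that is glued from the constant part and the $H^1\otimes H^1$ part and is not a sum of integral invariant pieces. Such a class lies in $H^2(X_t,\Z)^T$ but not in the image of restriction. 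The image is generated by the classes of components of $X_0$, of restrictions of the fixed part, and of the weight-$2$ part of $\mathrm{Gr}^W$. The tool is a discriminant argument: the invariant lattice is saturated, so we compute its discriminant. We then compute the discriminant of the image via intersection numbers on $X_0$, or of the weight filtration pieces $W_1$, $W_2$ of the limit MHS over $\Z$, and show they differ.

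The main obstacle is this integral lattice computation. Specifically, we must show that some invariant integral class fails to come from $\mathcal X$, which requires controlling $H^2(\mathcal X,\Z)$ exactly rather than rationally. I would first try the approach of Theorem \ref{intro_thm_H^2}. The Clemens--Schmid sequence is glued from the Wang sequence and the pair sequence $(\mathcal X,\mathcal X\smallsetminus X_0)$, and the failure of exactness over $\Z$ is measured by a torsion obstruction involving $H^1$. With $q=1$, we then compute this obstruction directly: it should be the cokernel of $N$ on $H^1(X_t,\Z)$ (namely $\Z/n$) mapping nontrivially. Choosing the Shioda--Inose-type gluing so that the discriminant group of the fixed part pairs nontrivially with this $\Z/n$ yields nonsurjectivity.
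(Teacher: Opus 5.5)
Your setting is the right one: $p_g=q=1$ surfaces fibered over a degenerating elliptic curve $E_t$, with constant period map on $H^2$, linked to Vinberg's K3. But the proposal never fixes a construction, and the mechanism you describe is incompatible with that setting.

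A polarized $\Z$-VHS with constant period map has finite monodromy, since the monodromy group lies in the compact stabilizer of a point of the period domain and is discrete. So after semistable reduction the monodromy on $H^2$ is trivial. Hence there is no summand $H^1(E_t)\otimes H^1(F)$ of $H^2(X_t)$ with monodromy $\left(\begin{smallmatrix}1&n\\0&1\end{smallmatrix}\right)\otimes 1$. In any case, for $g(F)>0$ such a summand would carry the moving $(2,0)$-part $H^{1,0}(E_t)\otimes H^{1,0}(F)$. Quotients or double covers of $Y\times E_t$ are, moreover, threefolds, not surfaces.

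The ``half class'' is asserted, not exhibited. The discriminant comparison also fails as stated: whenever $N\neq 0$ on $H^2$, the nonzero subspace $\operatorname{im}N\cap\ker N$ lies in the radical of the cup product on $\ker(T-I)$, so that lattice has discriminant $0$.

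Your last paragraph uses the right framework but misidentifies the obstruction. The Wang map $w\colon H^1(X_t,\Z)\to H^2(\mathcal X^*,\Z)$ factors through $\mathrm{coker}(T-I)\cong\Z\oplus\Z/n$. The $\Z/n$ part is killed by $\delta$, because $H^3(\mathcal X,\mathcal X^*;\Z)\cong H_3(X_0,\Z)$ is torsion-free. What is true is that $\mathrm{coker}(r)$ embeds into $\mathrm{sat}(\Z v)/\Z v$ for the single vector $v=\delta w(e)\in H_3(X_0,\Z)$, where $e$ generates the free part of $\mathrm{coker}(T-I)$. This group is cyclic, and nothing in your proposal computes it or forces it to be nonzero.

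The paper's argument runs differently. The failing classes are transcendental with \emph{trivial} monodromy, and the defect is detected by comparing two degenerations of one family. Take $X=\Kum(E_\omega\times E_\omega)\to\PP^1$ with fibers $II^*,IV^*,I_0^*$, and let $E_t\to\PP^1$ be branched at the three critical values and a moving point. Then $S_t$ is the minimal model of $E_t\times_{\PP^1}X$, and $T_{S_t}$ forms a constant sub-VHS. When the moving point hits a critical value, the central fiber has a reduced component birational to a K3 surface $Y_i$ carrying all of $H^{2,0}$. Specialization then gives an isometric embedding $T_{Y_i}\hookrightarrow T_{S_t}$. At $t_0$ the K3 is Vinberg's surface, with $T\cong\mathrm A_2$ of squarefree discriminant, which forces $T_{S_t}\cong\mathrm A_2$. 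At $t_2$ the K3 has discriminant $48$, so the image has index $4$. This two-degeneration lattice comparison is the idea your proposal lacks.

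One caution if you adopt it. The step $\pi_*T_{Y_2}=2T_X$ in the paper's discriminant computation gives $T_{Y_2}\cong\mathrm A_2(4)$. The only sublattice of $\mathrm A_2$ isometric to $\mathrm A_2(4)$ is $2\mathrm A_2$, so the image of $T_{S_{2,0}}$ is $2T_{S_t}$, with non-cyclic quotient $(\Z/2)^2$. Combined with the cyclicity above, this shows that $\operatorname{im}(r)\cap T_{S_t}$ has index at most $2$ and strictly contains the image of $T_{S_{2,0}}$. So the index-$4$ count alone does not establish non-surjectivity of $r$. One must also control the integral classes on $S_{2,0}$ outside $T_{S_{2,0}}$ that restrict into $T_{S_t}$.
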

The smooth fibers of the family are elliptic surfaces with $p_g=q=1$. We refer to such surfaces as elliptic-elliptic surfaces, following \cite{GZ}. They carry K3-type Hodge structures on $H^2$ and non-trivial Hodge structures on $H^1$. We construct a family of such surfaces over a complete curve with finite monodromy on $H^2$, and infinite monodromy on $H^1$. The family has several degenerations. Restricting to a neighborhood of a specific singular fiber, and then passing to a semistable model yields a family $\mathcal{X}\to \Delta$ with constant VHS on $H^2$, such that the image
of $H^2(\mathcal{X},\Z)$ is a proper subgroup of $ H^2(X_t, \Z)$.
To prove this, we show that the specialization map induces an injection on transcendental parts
$$T_{X_0}\hookrightarrow  T_{X_t},$$
 which preserves the cup product pairings. 
We find that the central fiber $X_0$ has a component birational to a specific elliptic  K3
surface with discriminant $48$.
However, we show that the discriminant
of $T_{X_t}$ is $3$.
Hence, the image of the restriction map on transcendental lattices has index $4$, so the LICT$_{\Z}$ fails. On the other hand, if one restricts to Picard lattices, LICT$_{\Z}$ holds for divisor classes in the family (cf. Corollary \ref{cor_ICTZ-divisor}).

\begin{corollary}
  The Clemens--Schmid sequence fails to be exact over $\Z$ for the semistable family in Theorem \ref{main_thm}. 
\end{corollary}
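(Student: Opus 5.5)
The corollary should follow formally from Theorem \ref{main_thm}, once we recall how the Clemens--Schmid sequence encodes the invariant cycle statement. For a semistable family $f:\mathcal{X}\to\Delta$, the inclusion $X_0\hookrightarrow\mathcal{X}$ is a homotopy equivalence, since $\mathcal{X}$ retracts onto the central fiber after shrinking $\Delta$. Hence $H^2(X_0,\Z)\cong H^2(\mathcal{X},\Z)$. Under this identification, the first map of \eqref{intro_eqn_CS-sequence} in degree $n=2$ is the restriction $r:H^2(\mathcal{X},\Z)\to H^2(X_t,\Z)$, that is, the specialization map. So exactness of the Clemens--Schmid sequence at the middle term $H^2(X_t,\Z)$ over $\Z$ means precisely
$$\operatorname{im}(r)=\ker\big(N:H^2(X_t,\Z)\to H^2(X_t,\Z)\big).$$

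The one point needing care is how to interpret $N$ integrally, since $N=\log T$ is a priori only defined rationally. I would handle this as follows. By Clemens' theorem the monodromy of a semistable family is unipotent, so $N$ is a finite sum with rational coefficients, and one can work with $\ker N\cap H^2(X_t,\Z)$ inside $H^2(X_t,\Q)$. Alternatively, one can replace $N$ by $T-I$, which is integral. In either version, $\ker N=\ker(T-I)$ on rational cohomology, as noted after \eqref{intro_eqn_CS-sequence}. Intersecting with the lattice $H^2(X_t,\Z)$ (which is torsion-free for our family) gives exactly the invariant sublattice $H^2(X_t,\Z)^{\pi_1(\Delta^*)}$.

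Exactness over $\Z$ would therefore say that $r$ surjects onto $H^2(X_t,\Z)^{\pi_1(\Delta^*)}$. Theorem \ref{main_thm} says this fails for the constructed family, so the sequence is not exact over $\Z$. In fact, for that family the VHS on $H^2$ is constant, so $T=I$ on $H^2$ and the kernel is all of $H^2(X_t,\Z)$. The failure of exactness is then witnessed concretely: the image of $r$ has index divisible by $4$ on transcendental lattices.

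The only step requiring any attention is the identification $H^2(X_0,\Z)\cong H^2(\mathcal{X},\Z)$ compatibly with the maps. This is standard: it holds for any proper family over a small disk, by the existence of a deformation retraction of $\mathcal{X}$ onto $X_0$.
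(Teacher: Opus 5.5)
Your argument is correct and is the paper's own: the corollary is Theorem \ref{main_thm} read through the observation, made in the introduction and in Section \ref{sec_postiveresults}, that exactness of the Clemens--Schmid sequence at $H^2(X_t,\Z)$ is precisely LICT$_\Z$, with $T-I$ in place of $N$ and $H^2(X_0,\Z)\cong H^2(\mathcal X,\Z)$ via the retraction. Your side remark that $T=I$ is true but not needed, and constancy of the period map alone only gives finite monodromy on $H^2$ (trivial on $\mathcal T$, finite on $\mathcal{NS}$), so it is the unipotence of semistable monodromy that upgrades this to $T=I$.
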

In particular, the decomposition theorem fails over $\Z$. This should be compared with the failure of the
decomposition theorem with torsion coefficients studied by Williamson \cite{Williamson17}.

\subsection{Global invariant cycle theorem}

For any proper surjective family $\mathcal{X}\to B$ over a projective base $B$, the global invariant cycle theorem holds over $\bbQ$ \cite{Deligne-HodgeII}, stated formally as Theorem \ref{intro_thm_GICTQ}. 
One can ask whether the same global statement holds integrally (GICT$_\Z$). It is classically known to hold for Lefschetz pencils on a smooth projective variety \cite{Lam81}. 

For semistable families over a complete curve, we show that GICT$_\Z$ holds for $H^1$, and for $H^2$ provided that $q(X_t)=0$ (see Corollary \ref{cor_GICT_H^1}). However, it is still false for $H^2$ in general: 





\begin{theorem}\label{intro_thm-failureGICTZ}
    The global invariant cycle ``theorem'' (\textup{GICT}$_{\Z}$) fails over $\Z$ for a semistable family. More precisely, there is a smooth projective threefold $\mathcal X$, a smooth projective curve $C$, and a proper semistable surjective morphism $\mathcal X\to C$, such that the restriction map
    $$H^2(\mathcal X,\Z)\to H^2(X_t,\Z)^{\pi_1(U,t)}$$
    is not surjective, where $U\subset C$ is the open subset over which the family is smooth.
\end{theorem}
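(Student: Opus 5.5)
The plan is to take the global family of elliptic-elliptic surfaces over a complete curve described before Theorem \ref{main_thm}, and pass to a semistable model over a base-changed curve, so that the local counterexample of Theorem \ref{main_thm} sits inside a global family. Concretely, let $\mathcal Y\to B$ be the family over a complete curve with finite monodromy on $H^2$. After a finite base change $C\to B$, we kill the finite $H^2$-monodromy and make all local monodromies unipotent. Semistable reduction (resolving the total space, which keeps it projective) then gives a smooth projective threefold $\mathcal X$ and a semistable morphism $\mathcal X\to C$. Since the monodromy on $H^2$ becomes trivial, $H^2(X_t,\Z)^{\pi_1(U,t)}=H^2(X_t,\Z)$. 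We need to arrange that one of the singular fibers is the degeneration analyzed in Theorem \ref{main_thm}. Its local model may differ by a further base change, but the transcendental index computation is insensitive to this as long as the component birational to the discriminant-$48$ K3 survives.

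The key step is to factor the restriction through a neighborhood of that special fiber. Let $p\in C$ be the point and $\Delta\ni p$ a small disk. The restriction $H^2(\mathcal X,\Z)\to H^2(X_t,\Z)$ factors as
$$H^2(\mathcal X,\Z)\to H^2(\mathcal X_\Delta,\Z)\to H^2(X_t,\Z).$$
So its image lies in the image of $H^2(\mathcal X_\Delta,\Z)$. Theorem \ref{main_thm}, or rather its proof, shows that the latter image is a proper subgroup. The mechanism is that specialization gives an isometric embedding $T_{X_0}\hookrightarrow T_{X_t}$ of transcendental lattices with $\disc T_{X_0}=48$ and $\disc T_{X_t}=3$. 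Hence the transcendental projection of any class coming from $\mathcal X_\Delta$ lies in an index-$4$ sublattice, and so $T_{X_t}$ is not contained in the image.

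To make this precise, I would phrase the obstruction intrinsically on $X_t$. A class $\alpha\in H^2(X_t,\Z)$ that lifts to $\mathcal X_\Delta$ has its transcendental component $\alpha|_{T}$, viewed in $T^\vee$ via the pairing, constrained to the index-$4$ subgroup. Any class with transcendental component generating $T_{X_t}^\vee/T_{X_t}$ appropriately, for instance one whose pairing with $T_{X_t}$ is surjective, cannot lift. Since such classes exist in $H^2(X_t,\Z)$ by unimodularity modulo torsion (the cohomology is torsion-free), the global restriction map is not surjective onto the invariants.

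The main obstacle is making the global family carry, after base change and semistable reduction, exactly the local degeneration of Theorem \ref{main_thm}, with trivial global monodromy on $H^2$. I would first check that the finite monodromy on $H^2$ of the family over $B$ is realized by a finite cover unramified or tamely ramified at the points in question. I would then use that any base change only replaces $T_{X_0}$ by a lattice still mapping into the same index-$4$ image, because the image of $H^2(\mathcal X_\Delta)$ can only shrink under base change. That monotonicity is the point that makes the argument robust.
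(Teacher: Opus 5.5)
Your architecture is the paper's: factor the global restriction through $H^2(\mathcal X_\Delta,\Z)$ for a disc around a point over $t_2$ and invoke the local failure (Lemma~\ref{lemma_failureGICTZ} and Theorem~\ref{thm_GICTZ_aux}). Killing the whole finite $H^2$-monodromy first is a harmless variant: it makes mere non-surjectivity of the local restriction suffice, whereas the paper keeps the finite $\mathcal{NS}$-monodromy and uses that the missed class lies in the constant sub-local system $\mathcal T$.

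The genuine problem is the step you call the point that makes the argument robust, because the monotonicity goes the other way. Suppose $\tilde{\mathcal X}\to\tilde\Delta$ is obtained from $\mathcal X_\Delta$ in the usual way (base change, normalization, resolution). Then there is a morphism $\Phi\colon\tilde{\mathcal X}\to\mathcal X_\Delta$ identifying nearby fibers, and the restriction from $\mathcal X_\Delta$ factors through $\Phi^*$. Hence $\operatorname{Im}H^2(\mathcal X_\Delta,\Z)\subseteq\operatorname{Im}H^2(\tilde{\mathcal X},\Z)$: the image can only grow. This is exactly how base change repairs the multiple-fiber example in the introduction. So failure of \textup{LICT}$_{\Z}$ for the original family near $t_2$ says nothing by itself about your semistable model, which is the family the theorem concerns. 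You must rerun the argument on the new family, as the paper does. The K3 component over $t_2$ is reduced (Lemma~\ref{lemma:red}), so it survives any base change and modification with unchanged birational type. The new central fiber therefore still has transcendental lattice of discriminant $48$, and Lemmas~\ref{lemma_wrongway}--\ref{lemma:sp1} and Theorem~\ref{main_thm-aux} apply to it directly. Your remark that the component ``survives'' is the right idea, but it needs this reducedness input and should replace the monotonicity claim.

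Your attempt to make the obstruction precise is also unsupported. Lemma~\ref{lemma:sp1} concerns only the image of $T_{X_0}$, i.e.\ classes supported on the K3 component $Y_2'$. For a general $\beta\in H^2(X_0,\Z)$, the rational transcendental component is determined by $\beta|_{Y_2'}$ and lies a priori only in the dual $T_{X_0}^\vee$. This is because $H^2(Y_2',\Z)$ is a proper overlattice (of index $48$) of $T_{Y_2'}\oplus NS(Y_2')$. Moreover, the identity $\langle sp(\beta),sp(\gamma)\rangle=\int_{Y_2'}\beta|_{Y_2'}\cdot\gamma$ for $\gamma\in T_{X_0}$ only describes the functional $\langle sp(\beta),-\rangle$ on the index-$4$ sublattice $sp(T_{X_0})$, not on all of $T_{X_t}$. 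So the claim that the transcendental projection of every liftable class lies in an index-$4$ subgroup of $T_{X_t}^\vee$ is an extra assertion that neither the index computation nor anything else in the paper establishes. Drop that paragraph and simply cite Theorem~\ref{main_thm-aux} for the semistable model, justified via the reduced component as above, together with the triviality of the $H^2$-monodromy after your base change.
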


Note that in our example, the Leray spectral sequence still degenerates over $\Z$ for the smooth fibration $\mathcal X_U\to U$ because $U$ is a non-compact Riemann surface homotopy equivalent to a 1-dimensional CW complex, so the invariant cycles lift to $H^2(\mathcal X_U,\Z)$. However, it fails to lift to $H^2(\mathcal X,\Z)$ due to 
 Theorem \ref{main_thm}. 

\subsection{The construction}\label{subsect:construction}

The family $\mathcal X \to \Delta$ used in the proof of Theorem \ref{main_thm} has constant period map.
Each smooth fiber is Hodge-theoretically associated with a K3 surface $Y$, which is one of Vinberg's ``most algebraic'' K3 surfaces \cite{Vin83} with discriminant $d=3$, in the sense that there is a Hodge isometry $T_{X_t}\cong T_Y$. They are related by the following diagram, formally analogous to a Shioda--Inose structure \cite{ShiIno77,Morrison84}:

\begin{figure}[ht]
    \centering
\begin{equation*}
\begin{tikzcd}
X_t\arrow[dr,dashed,"\pi_1"] && Y\arrow[dl,dashed,"\pi_2"']\\
&Z
\end{tikzcd}
\end{equation*}
\end{figure}

  Shioda and Inose \cite{ShiIno77} showed that $Y$ admits a rational map of degree two onto the Kummer surface $Z=\Kum(E_\omega\times E_\omega)$ associated to the self-product of the elliptic curve $E_\omega = \C/(\Z+\Z \omega)$ with $j=0$.
The rational map $\pi_2$ arises from a quadratic base change of an elliptic fibration on $Z$ branched at two special fibers ($IV^*,I_0^*$).

We show that the quadratic base change branched at {\it three} special fibers ($II^*,IV^*,I_0^*$), plus an additional smooth fiber lying over $t$ produces an elliptic-elliptic surface $X_t$. As $t$ varies, the Hodge structure on $H^2(X_t,\Z)$ is locally constant.
The three singular members of the family occur when $t$ collides with each of the special values lying under singular fibers of $Z$. Each of these singular surfaces contains a reduced component birational to a K3 surface; one of these K3 surfaces is $Y$. This forces the discriminant of $T_{X_t}$ to be 3. 
On the other hand, when $t$ collides with the special value lying under the $I_0^*$ fiber, the limiting K3 is different; it has discriminant $48$. This allows us to deduce Theorem \ref{main_thm}. A second counterexample can be constructed similarly, where $Y$ is replaced with Vinberg's second most algebraic K3, which has discriminant $4$ (cf. Theorem \ref{thm_2nd-counterexample}).



\subsection{Additional questions} It would be interesting to seek other counterexamples to the semistable LICT$_\Z$ using torsion classes in $H^k(X_t,\Z)$; the families studied here have torsion-free $H^2(X_t,\Z)$. We also wonder whether LICT$_\Z$ holds in {\it all} cohomological degrees for semistable degenerations of abelian varieties.

\subsection*{Outline} In Section \ref{sec_prelim}, we recall the preliminary notions of transcendental Hodge structures, elliptic surfaces, and Shioda--Inose structures. 
Next, we prove some positive results on the invariant cycle theorem over $\Z$, Theorems \ref{intro_thm_H^1} and \ref{intro_thm_H^2}. In Section \ref{sec_H^1}, we discuss the LICT$_{\Z}$ for $H^1$ and prove Theorem \ref{ssh1}. In Section \ref{sec_postiveresults}, we discuss the LICT$_{\Z}$ for $H^2$ and prove Theorem \ref{intro_thm_H^2}. We also prove the global version in  Corollary \ref{cor_GICT_H^1}. 

The rest of the paper concerns negative results, namely Theorems \ref{main_thm} and \ref{intro_thm-failureGICTZ}. In Section \ref{sec_constantHodge}, we study quadratic base change of elliptic surfaces and construct a smooth family of elliptic-elliptic surfaces with constant VHS on $H^2$. In Section \ref{sec_degenerations}, we extend the smooth family to a proper family with controlled singular fibers. In Section \ref{sec_LICTZ}, we prove Theorem \ref{main_thm}. In Section \ref{sec_GICTZ}, we prove Theorem \ref{intro_thm-failureGICTZ}. In Section \ref{sec_addcounterexample}, we construct a second counterexample using the same techniques, but a different Shioda--Inose input. 

In Appendix \ref{sec_SNC}, we discuss some topological results on SNC varieties. 
In Appendix \ref{sec_periodmap}, we find correspondences between the elliptic-elliptic surfaces in question and their associated K3 surfaces. In Appendix \ref{sec_divisor}, we discuss the invariant cycle theorem for divisor classes.

\subsection*{Acknowledgments} We would like to thank Valery Alexeev, Philip Engel, Phillip Griffiths, and Matt Kerr for useful communications.

\section{Preliminaries}\label{sec_prelim}
\subsection{Transcendental Hodge structures}\label{subsec_transcendental}
In this paper, the word \emph{surface}, without any further qualification, will mean
 a smooth projective surface  over $\C$. If $S$ is a surface,  the N\'eron--Severi group $NS(S)$ is the subgroup of divisor classes in $H^2(S,\Z)$. The intersection pairing on $NS(S)/\textup{torsion}$ is nondegenerate, so it carries a lattice structure. Its orthogonal complement $T_S$ in $H^2(S,\Z)$ is called the transcendental lattice. 
 $$T_S:= NS(S)^\perp\subset H^2(S,\Z).$$
By Wu's formula, $T_S$ is always an even lattice, meaning that for every $x\in T_S$, $x\cdot x\in 2\Z$. One has $|\det(NS(S))|=|\det(T_S)|$, and this positive integer is called the \textit{discriminant} of $S$ and is denoted by $\disc(S)$.
 
 The complexification $T_S\otimes \C$ contains $H^{2,0}(S,\C)$, and it carries a Hodge filtration induced from $H^2(S,\C)$. Hence, $T_S$ (resp. $T_{S,\bbQ}$) carries a weight two $\Z$-Hodge structure (resp. $\bbQ$-Hodge structure) induced from $H^2(S,\Z)$.  We call $\phi:T_S\to T_{S'}$ a Hodge isometry if $\phi$ is an isometry as a lattice and preserves the Hodge filtration.

\begin{lemma}\label{lemma_trans-irreducible}
The transcendental Hodge structure $T_{S,\bbQ}\subset H^2(S,\bbQ)$ is the smallest $\bbQ$  sub-Hodge structure whose complexification contains $H^{2,0}(S)$.  Furthermore $$T_S = T_{S,\bbQ}\cap H^2(S,\Z)/\textup{torsion}.$$
\end{lemma}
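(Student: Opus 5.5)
We argue with the rational Hodge structure $H^2(S,\bbQ)$, the polarization given by cup product, and the Lefschetz $(1,1)$ theorem. Set $NS_\bbQ = NS(S)\otimes\bbQ$. We use the identification $T_{S,\bbQ}=T_S\otimes\bbQ = NS_\bbQ^\perp\subset H^2(S,\bbQ)$, which holds because orthogonal complements commute with tensoring by $\bbQ$ once torsion is ignored. The cup product is nondegenerate on $H^2(S,\bbQ)$, and by the Hodge index theorem it is nondegenerate on $NS_\bbQ$. Hence $H^2(S,\bbQ)=NS_\bbQ\oplus T_{S,\bbQ}$, and $T_{S,\bbQ}$ is a sub-Hodge structure: its complexification is the orthogonal complement of $NS_\bbQ\otimes\C\subset H^{1,1}$, and this complement contains $H^{2,0}\oplus H^{0,2}$ together with part of $H^{1,1}$. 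In particular $H^{2,0}(S)\subset T_{S,\bbQ}\otimes\C$.

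For minimality, let $V\subset H^2(S,\bbQ)$ be any $\bbQ$ sub-Hodge structure with $H^{2,0}\subset V_\C$.

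\emph{Step 1.} Take $V^\perp$ with respect to cup product. It is again a sub-Hodge structure, since the Hodge decomposition is orthogonal for the pairing in the appropriate sense ($H^{p,q}\perp H^{p',q'}$ unless $p+p'=2$). Because $V_\C\supset H^{2,0}$ and $V$ is defined over $\bbQ$, hence stable under conjugation, we also have $V_\C\supset H^{0,2}$. Since $H^{2,0}$ pairs only with $H^{0,2}$ and vice versa, the orthogonality conditions give $V^\perp_\C\cap H^{2,0}=0$ and $V^\perp_\C\cap H^{0,2}=0$.

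\emph{Step 2.} As $V^\perp$ is a sub-Hodge structure, $V^\perp_\C=\bigoplus (V^\perp_\C\cap H^{p,q})$, so $V^\perp_\C\subset H^{1,1}$. Thus every rational class in $V^\perp$ is of type $(1,1)$, and by Lefschetz $(1,1)$ some multiple of it is integral and algebraic. Therefore $V^\perp\subset NS_\bbQ$.

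\emph{Step 3.} Taking orthogonal complements reverses the inclusion, and the pairing on $H^2(S,\bbQ)$ is nondegenerate by Poincaré duality, so $V=(V^\perp)^\perp\supset NS_\bbQ^\perp=T_{S,\bbQ}$. This proves minimality.

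For the integral statement, write $H=H^2(S,\Z)/\textup{torsion}$. Torsion classes pair trivially with everything, so $T_S$ contains the torsion and the statement should be read modulo torsion, with $T_S$ understood as its image in $H$. An element $x\in H$ lies in $T_{S,\bbQ}$ if and only if $x\cdot D=0$ for all $D\in NS_\bbQ$, which holds if and only if $x\cdot D=0$ for all $D\in NS(S)$, i.e. $x\in T_S$. So $T_S = T_{S,\bbQ}\cap H$; in particular $T_S$ is primitive in $H$.

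The only real subtlety is Step 1: one must check that $V^\perp$ is a sub-Hodge structure and that it meets $H^{2,0}$ trivially. This uses conjugation-stability of $V$ (to get $H^{0,2}\subset V_\C$) and the type-compatibility of the cup product. After that, Steps 2 and 3 follow from the Lefschetz $(1,1)$ theorem and the nondegeneracy of the pairing.
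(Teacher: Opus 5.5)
Your proposal is correct and follows essentially the same route as the paper. The paper compresses your orthogonal-complement argument into one line: by Lefschetz $(1,1)$, $NS(S)_\bbQ$ is the maximal sub-Hodge structure inside $H^{1,1}$, and $H^{2,0}\perp H^{1,1}$. It then deduces the integral statement from the primitivity of $T_S$ in $H^2(S,\Z)/\textup{torsion}$, which you verify directly.
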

\begin{proof}
By the Lefschetz $(1,1)$ theorem, $NS(S)_\bbQ$ is the maximal Hodge structure contained in $H^{1,1}(S)$. hence the first statement follows from the fact that $H^{2,0}(S)$ is orthogonal to $H^{1,1}(S)$. The second statement follows from the fact that  $T_S$ is primitive in $H^2(S,\Z)/\text{torsion}$.
\end{proof}


Given a dominant rational map $\pi:S\dashrightarrow S'$ between surfaces. There is a pullback $\pi^*$ and a pushforward $\pi_*$ (of free abelian groups)
$$\pi^*:T_{S'}\to T_S,\ \pi_*:T_S\to T_{S'}.$$
To see this, one can resolve the indeterminacy of $\pi$:
$$S\xleftarrow{\sigma}\tilde{S}\xrightarrow{p} S',$$
where $\sigma$ is birational. We can define $\pi^*:=\sigma_*\circ p^*$ and $\pi_*:=p_*\sigma^*$ on transcendental lattices. It is well-defined since the transcendental lattice is birationally invariant (both $\sigma_*$ and $\sigma^*$ induce isomorphisms $T_S\cong T_{\tilde{S}}$), and the definition is independent of the resolution $\sigma$. We remark that $\pi^*$ and $\pi_*$ do not preserve the intersection pairings in general.

 When $p_g(S)=p_g(S')$, then both $\pi^*$ and $\pi_*$ are injective and the image is a finite index sublattice. Thus there is an isomorphism of (unpolarized) $\bbQ$-Hodge structures: $$T_S\otimes\bbQ\cong T_{S'}\otimes \bbQ.$$

\subsection{Elliptic surfaces}
An elliptic surface is a smooth projective surface $S$ together with a map $f:S\to C$ to a smooth projective curve $C$, whose general fiber is an elliptic curve. We call an elliptic surface relatively minimal if there is no ($-1$)-curve contained in a fiber of $f$. If $f$ has a section, then the set of sections of $f$ form an abelian group $MW(S/C)$, called the Mordell--Weil group, which is finitely generated as long as $f$ has a singular fiber. The rank of this group is called the Mordell--Weil rank. 
There is a  $\bbQ$-lattice structure on the torsion-free part of the Mordell--Weil group, called the Mordell--Weil lattice. One refers to \cite{SchShi08, SchShi19} for details.

Without further qualification, we will assume that all elliptic surfaces are relatively minimal and admit a section. The possible singular fibers in an elliptic surface were classified by Kodaira, N\'eron, and Tate \cite[Section 5.4]{SchShi19} into the following types:
$$II, II^*, III,III^*,IV, IV^*,I_n,I_n^* \ \textup{with}\ n\ge0 .$$
These singular fibers occur as the Kodaira--N\'eron models for any one-parameter family of elliptic curves over $\Delta^*$. Among all the singular fiber types, we are particularly interested in the following subset featuring the superscript $^*$:

\begin{definition}\normalfont
    The singular fiber types $II^*,III^*,IV^*,I_n^*$, $n\ge 0$ are called \textit{star fibers}.
\end{definition}

They are all reducible and non-reduced; the dual graph encoding the incidence of all components not meeting the zero section
 is a Dynkin diagram of type $\mathrm E_8, \mathrm E_7, \mathrm E_6$, and $\mathrm D_{4+n}$, respectively.  These fiber types play an important role in the study of quadratic base change of elliptic surfaces. We record the following useful observation for later use:
\begin{lemma}\label{lemma_star-1comp}
   In each star fiber $F$, there are exactly four components with odd multiplicity, and they are disjoint.
   \end{lemma}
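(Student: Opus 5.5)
Since the claim concerns only the finitely many Kodaira star fibers, I will go through the list $II^*, III^*, IV^*, I_n^*$ and write down the multiplicities of the components. Each star fiber $F$ is a tree of smooth rational curves meeting transversally. Its dual graph is the extended Dynkin diagram $\tilde{\mathrm E}_8$, $\tilde{\mathrm E}_7$, $\tilde{\mathrm E}_6$ or $\tilde{\mathrm D}_{4+n}$, and the multiplicities are the coefficients of the null root. These coefficients are determined by $F\cdot\Theta=0$ for every component $\Theta$, together with $\Theta^2=-2$. The standard labelings are:
\begin{itemize}
\item $I_n^*$: four end components of multiplicity $1$, attached in pairs to the two ends of a chain of $n+1$ components of multiplicity $2$;
\item $IV^*$: multiplicities $1,2,3,2,1$ along one arm pair, with a third arm $3,2,1$ from the central component;
\item $III^*$: multiplicities $1,2,3,4,3,2,1$, with a branch of multiplicity $2$ at the central component;
\item $II^*$: multiplicities $2,4,6,5,4,3,2,1$, with a branch of multiplicity $3$ attached to the component of multiplicity $6$.
\end{itemize}
One can confirm each labeling quickly: twice any coefficient equals the sum of its neighbours' coefficients.

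Next I count the odd multiplicities in each case.
\begin{itemize}
\item $I_n^*$: the four multiplicity-$1$ ends, and nothing else, since the chain has multiplicity $2$.
\item $IV^*$: the three ends of multiplicity $1$ plus the central component of multiplicity $3$, so four.
\item $III^*$: the two ends of multiplicity $1$ and the two components of multiplicity $3$, so four.
\item $II^*$: multiplicities $5,3,1$ on the long arm and $3$ on the short branch, so four.
\end{itemize}

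For disjointness, components meet only along edges of the tree. So it suffices to check that no two adjacent components both have odd multiplicity. This is automatic from the relation $2m_v=\sum_{w\sim v} m_w$. At an odd vertex $v$ of valence $1$ or $2$, whose neighbours have even sum, I check that the neighbours are even directly from the lists. For example, in $III^*$ the neighbours of each $3$ are $2$ and $4$. In $IV^*$ the $3$ is adjacent only to $2$'s. In $II^*$ the $5$ is adjacent to $6$ and $4$, the $3$ on the long arm is adjacent to $4$ and $2$, and the branch $3$ is adjacent to $6$. For $I_n^*$, each end meets only a multiplicity-$2$ component.

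This is a finite verification, so there is no real obstacle. The only care needed is getting the $II^*$ multiplicities right; I would double-check these against the null-root relations.
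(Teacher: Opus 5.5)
Your proposal is correct and takes the same approach as the paper, whose proof simply reads the claim off Kodaira's table of fiber types \cite[Theorem 5.12]{SchShi19}; your null-root multiplicities for $I_n^*$, $IV^*$, $III^*$, $II^*$ and your adjacency checks are all accurate. One point is overstated: disjointness is not ``automatic'' from $2m_v=\sum_{w\sim v}m_w$, since that relation forces evenness only for the neighbour of an odd leaf and not for the neighbours of odd vertices of valence $2$ or $3$, but your explicit check from the lists covers those cases.
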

   \begin{proof}
       This can be directly verified from \cite[Theorem 5.12]{SchShi19}.
   \end{proof}

The canonical bundle of an elliptic surface $f:S\to C$ is given by
$$\omega_{S} := f^*(\omega_C\otimes \mathbb L_S),$$
where $\mathbb L_S = f_*(\omega_{S/C}) $ is the {\it fundamental line bundle} of degree $\chi(S)$ on $C$.

\begin{lemma}\label{lemma_e(S)} The degree of fundamental line bundle of an elliptic surface satisfies 
 $$\deg \mathbb L_S = \frac{1}{12}e(S) = \frac{1}{12}\sum_{c\in \Delta} e(f^{-1}(c)),$$
 where $e(\cdot)$ denotes the topological Euler characteristic, and $\Delta$ is the subset of $C$ that corresponds to the singular fibers.
\end{lemma}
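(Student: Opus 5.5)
The plan is to prove the two equalities separately: the second by a topological Euler characteristic count, the first by Noether's formula.

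\emph{Second equality.} Let $\Delta\subset C$ be the finite set of critical values of $f$ and put $C^\circ=C\smallsetminus\Delta$. Over $C^\circ$ the map $f$ is a smooth proper fibration with fiber an elliptic curve $E$. Since $e(E)=0$, multiplicativity of the Euler characteristic for fibrations gives $e(f^{-1}(C^\circ))=e(C^\circ)\,e(E)=0$. Excision, or additivity of the compactly supported Euler characteristic (which agrees with $e$ for complex varieties), then gives
$$e(S)=e(f^{-1}(C^\circ))+\sum_{c\in\Delta}e(f^{-1}(c))=\sum_{c\in\Delta}e(f^{-1}(c)).$$

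\emph{First equality.} Here I would use Noether's formula $12\chi(\OO_S)=K_S^2+e(S)$.
\begin{itemize}
\item From $\omega_S=f^*(\omega_C\otimes\mathbb L_S)$, the canonical class is numerically a multiple of a fiber class $F$. Since $F^2=0$, we get $K_S^2=0$, hence $\chi(\OO_S)=e(S)/12$.
\item It remains to show $\deg\mathbb L_S=\chi(\OO_S)$; this is the content of the paper's remark that $\mathbb L_S$ has degree $\chi(S)$.
\item Apply Leray to $f$. We have $f_*\OO_S=\OO_C$. Relative duality gives $R^1f_*\OO_S\cong (f_*\omega_{S/C})^\vee=\mathbb L_S^{-1}$. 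This uses that $R^1f_*\OO_S$ is locally free, which holds because relative minimality together with the existence of a section excludes multiple fibers, so $h^1$ of the fibers is constant.
\item Leray then gives $\chi(\OO_S)=\chi(\OO_C)-\chi(\mathbb L_S^{-1})$. By Riemann--Roch this equals $(1-g)-(-\deg\mathbb L_S+1-g)=\deg\mathbb L_S$.
\end{itemize}

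\emph{Main obstacle.} The delicate step is the local freeness of $R^1f_*\OO_S$ and its identification with $\mathbb L_S^{-1}$ via duality. I would handle this by citing the standard treatment in \cite{SchShi19}, or by arguing from the constancy of $h^0(\OO_{F})=1$ on all fibers, which holds since there are no multiple fibers. Everything else is formal.
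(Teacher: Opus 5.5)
Your proof is correct. The paper gives no proof of this lemma; it quotes it as a standard fact from the theory of elliptic surfaces (cf.\ \cite{SchShi19}). Your argument is the standard textbook proof: additivity of $e$ together with $e(E)=0$ gives the second equality, and Noether's formula with $K_S^2=0$, combined with Leray and relative duality $R^1f_*\OO_S\cong\mathbb L_S^{-1}$ to obtain $\deg\mathbb L_S=\chi(\OO_S)$, gives the first.

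Two small precisions:
\begin{itemize}
\item It is the section alone that rules out multiple fibers, since it meets every fiber with intersection number $1$. Relative minimality is what underlies the canonical bundle formula you use to get $K_S^2=0$.
\item The step from ``no multiple fibers'' to $h^0(\OO_F)=1$ is a genuine lemma (a consequence of Zariski's lemma), and you are right to flag it as something to cite rather than treat it as automatic.
\end{itemize}
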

In particular, $\deg\mathbb L_S>0$ if and only if there is at least one singular fiber.

\begin{lemma}
    The geometric genus $p_g(S)$ of an elliptic surface satisfies $$p_g(S)=\deg \mathbb L_S+g(C)-1.$$
\end{lemma}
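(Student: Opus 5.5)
The plan is to combine Noether's formula with the Leray spectral sequence of $f$ and the canonical bundle formula already recorded.

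First, compute $\chi(\OO_S)$ in terms of $\deg\mathbb L_S$. The fibration $f:S\to C$ is relatively minimal and elliptic, so $K_S\equiv f^*(\omega_C\otimes\mathbb L_S)$ is numerically a multiple of the fiber class $F$, and $F^2=0$. Hence $K_S^2=0$. Noether's formula $12\chi(\OO_S)=K_S^2+e(S)$ then gives $\chi(\OO_S)=e(S)/12$, and by Lemma \ref{lemma_e(S)} this equals $\deg\mathbb L_S$.

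Second, compute $q(S)$. I will use the Leray spectral sequence, or equivalently $R^1f_*\OO_S\cong \mathbb L_S^{\vee}$, which follows from relative duality since $f_*\OO_S=\OO_C$. This gives
$$0\to H^1(C,\OO_C)\to H^1(S,\OO_S)\to H^0(C,\mathbb L_S^{\vee})\to \cdots.$$
\begin{itemize}
\item If $\deg\mathbb L_S>0$, then $H^0(C,\mathbb L_S^\vee)=0$, so $q(S)=g(C)$.
\item If $\deg\mathbb L_S=0$, there are no singular fibers. This case needs separate care: $q$ can be $g(C)$ or $g(C)+1$, depending on whether $\mathbb L_S$ is trivial.
\end{itemize}

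Third, solve for $p_g$. From $\chi(\OO_S)=1-q+p_g$ we get $p_g=\chi(\OO_S)+q-1=\deg\mathbb L_S+g(C)-1$ whenever $q=g(C)$. Alternatively, $p_g=h^0(S,\omega_S)=h^0(C,\omega_C\otimes\mathbb L_S)$ by the projection formula and $f_*\OO_S=\OO_C$. When $\deg\mathbb L_S>0$ (and $g(C)\ge 1$, or the degree is large enough), Riemann--Roch and vanishing of $h^1(C,\omega_C\otimes\mathbb L_S)=h^0(C,\mathbb L_S^\vee)=0$ give $\deg\mathbb L_S+2g-2+1-g=\deg\mathbb L_S+g-1$ directly. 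This second route is cleaner and covers every case with $\deg\mathbb L_S>0$.

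The main obstacle is the degenerate case $\deg\mathbb L_S=0$. The formula can fail there: for $E\times C$, $p_g=g(C)$ rather than $g(C)-1$. So I would state the lemma under the standing assumption of at least one singular fiber, which is implicit in the paper's setup (Mordell--Weil finiteness). In that setting the vanishing $h^0(C,\mathbb L_S^\vee)=0$ follows from positivity of the degree by Lemma \ref{lemma_e(S)}, and the Riemann--Roch computation above concludes the proof.
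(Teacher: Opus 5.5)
Your proposal is correct, and your second route ($f_*\OO_S=\OO_C$ plus the projection formula to get $p_g=h^0(C,\omega_C\otimes\mathbb L_S)$, then Riemann--Roch on $C$ with $h^1(C,\omega_C\otimes\mathbb L_S)=h^0(C,\mathbb L_S^{\vee})=0$) is the standard argument; the paper gives no proof of its own and simply records the lemma as a known consequence of the canonical bundle formula $\omega_S=f^*(\omega_C\otimes\mathbb L_S)$. Your caveat is also correct and worth keeping: as literally stated the lemma fails for $S=E\times C$ (where $p_g=g(C)$), and the paper only uses it when there is a singular fiber; since your argument only needs $h^0(C,\mathbb L_S^{\vee})=0$, the formula in fact holds exactly when $\mathbb L_S\not\cong\OO_C$, and your parenthetical condition on $g(C)$ or on the size of the degree is unnecessary.
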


\begin{remark}\normalfont\label{remark_pg=1}
 There are exactly two types of elliptic surfaces $S\to C$ with $p_g=1$ that have positive Euler characteristic: one has $g(C)=0$ and $\deg\mathbb L_S=2$, and is an elliptic K3 surface; the other has $g(C)=1$ and $\deg\mathbb L_S=1$, and is called an elliptic-elliptic surface.
\end{remark}
An elliptic-elliptic surface $S\to C$ is an elliptic surface with $p_g=q=1$ together with a section \cite{EGW,GZ}. Equivalently, the base curve has genus one, and the degree of the fundamental line bundle is one also. The cohomology group $H^2(S,\Z)$ carries a weight two Hodge structure
with Hodge numbers $h^{2,0}=h^{0,2}=1, h^{1,1}=12$.
Therefore, $S$ has a unique nonzero holomorphic 2-form $\omega$ up to a scaling. It vanishes along a unique fiber, which is called the canonical fiber, denoted as $K_S$. 

We close this preliminary subsection with a useful lemma regarding torsion in the middle cohomology of an elliptic surface.

\begin{lemma}\cite[Lemma D]{GurjarShastri1985}\label{lemma:GS}  If an elliptic surface $S$ has a section and at least one singular fiber, then its second cohomology $H^2(S,\Z)$ is torsion-free.
\end{lemma}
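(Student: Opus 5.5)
To prove that $H^2(S,\Z)$ is torsion-free, I would use the universal coefficient theorem and Poincaré duality: $H^2(S,\Z)_{\mathrm{tors}}\cong H_1(S,\Z)_{\mathrm{tors}}$. So it suffices to show that $H_1(S,\Z)$, or even $\pi_1(S)^{ab}$, is torsion-free. The plan is to compute $H_1(S,\Z)$ from the fibration $f:S\to C$, using the section $\sigma$ and the existence of a singular fiber.

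\textbf{Step 1 (fundamental group of the fibration).} Let $U\subset C$ be the locus of smooth fibers and $F$ a smooth fiber. On $f^{-1}(U)\to U$ the homotopy exact sequence $\pi_1(F)\to\pi_1(f^{-1}(U))\to\pi_1(U)\to 1$ is split by $\sigma$. Filling in each singular fiber $F_c$ over $c$ does two things.
\begin{itemize}
\item It kills the image in $\pi_1(F)$ of the vanishing cycles at $c$.
\item It kills the lift via $\sigma$ of a small loop $\gamma_c$ around $c$. Because the section passes through a reduced component of $F_c$ with multiplicity one, the loop $\sigma(\gamma_c)$ bounds a disk in $S$.
\end{itemize}
The resulting standard description (as in Xiao's or Nori's computation for fibrations with a section and no multiple fibers) is that $\pi_1(S)$ is a quotient of $\pi_1(F)\rtimes\pi_1(C)$. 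Passing to $H_1$, I expect the formula
$$H_1(S,\Z)\cong H_1(C,\Z)\oplus H_1(F,\Z)_{\Gamma}/V,$$
where $\Gamma$ is the monodromy group and $V$ is the span of the images of the vanishing cycles.

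\textbf{Step 2 (the fiber part vanishes).} Next I would show that $H_1(F,\Z)$ dies in $H_1(S,\Z)$. Pick a singular fiber $F_c$; by hypothesis at least one exists. Since $\sigma$ meets $F_c$ in the smooth locus of a multiplicity-one component, the inclusion $F_c\hookrightarrow S$ factors the inclusion of $F$ through a retraction of a tubular neighborhood. So the image of $H_1(F)\to H_1(S)$ factors through $H_1(F_c,\Z)$. I would then check against Kodaira's list that for every singular fiber type the relevant group is small enough to be killed:
\begin{itemize}
\item For $I_n$, the group is $\Z$, spanned by the loop of the cycle of curves. The image of $H_1(F)$ is generated by the invariant cycle, and the vanishing cycle dies.
\item For all other types, $H_1(F_c,\Z)=0$ because the components are rational and the dual graph is a tree.
\end{itemize}
For the $I_n$ case this gives only a rank-one potential piece. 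To handle it I would instead use that the coinvariants $H_1(F,\Z)/(T-I)$ modulo $V$ vanish, since for $I_n$ we have $(T-I)$ image $=n\cdot v$ together with $v$ itself in $V$. Combining this over all singular fibers, the fiber contribution to $H_1(S,\Z)$ is zero.

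\textbf{Step 3 (conclusion).} By Step 2, $H_1(S,\Z)\cong H_1(C,\Z)\cong\Z^{2g}$, which is torsion-free. Hence $H^2(S,\Z)_{\mathrm{tors}}=0$. I expect the main obstacle to be Step 1: justifying rigorously that filling in the fibers imposes exactly the stated relations, with no extra torsion coming from non-reduced components. This is where the section is essential, since it rules out multiple fibers and provides a multiplicity-one component through which the small loops $\gamma_c$ lift and bound.
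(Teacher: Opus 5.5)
Your reduction to showing that $H_1(S,\Z)$ is torsion-free is fine, and so is the van Kampen description in Step 1. Step 1 is in fact the routine part: the section splits the fibration sequence and kills the loops $\sigma(\gamma_c)$. Step 2 also works as written whenever some singular fiber is not of type $I_n$ with $n\ge 1$. Such a fiber is simply connected, so $\pi_1(F)\to\pi_1(S)$ is trivial and $H_1(S,\Z)\cong H_1(C,\Z)$.

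The genuine gap is the case where every singular fiber is of type $I_n$. There your local claim is false. For $T=T_v^{\,n}$ one has $(T-I)H_1(F,\Z)=n\Z v$, so $H_1(F,\Z)/((T-I)H_1(F,\Z)+\Z v)=\Z^2/\Z v\cong\Z$, which is just $H_1(F_c,\Z)$ again. (Also, the invariant cycle of $T_v^{\,n}$ is $v$ itself; what survives in $H_1(F_c,\Z)$ is a dual cycle $w$ with $v\cdot w=1$.) So locally only the vanishing cycle dies, and the phrase ``combining this over all singular fibers'' carries the entire content of the lemma.

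That step is not formal. Suppose all vanishing cycles, transported to a common base point, lay in a $\Gamma$-stable sublattice $L\subset H_1(F,\Z)$ of prime index $p$, with $\Gamma$ acting trivially on $H_1(F,\Z)/L$. Then your own Step 1 formula would make the fiber part surject onto $\Z/p$. Since $b_1(S)=2g(C)$, the fiber part is finite, so this would be exactly the torsion you want to exclude. No local information about the individual $I_n$ fibers rules this out.

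The missing idea is a global constraint on the monodromy group $\Gamma$. One way to supply it:
\begin{enumerate}
\item When all singular fibers are of type $I_n$, the $j$-invariant is nonconstant. Hence $\Gamma$ preserves no line in $H_1(F,\mathbb{Q})$. Otherwise, in a suitable basis, $\Gamma$ acts on the period $\tau$ by translations, and some $e^{2\pi i\tau/N}$ would be a bounded holomorphic function on $U$, hence constant on $C$. Therefore the span $V$ of the transported vanishing cycles has finite index.
\item If the fiber part were nonzero, you would get $p$ and $L$ as above. After a change of basis, $L=\Z e_1+p\Z^2$ and $\Gamma\subseteq\Gamma_1(p)$.
\item The period map then induces a nonconstant, hence surjective, map $C\to X_1(p)$. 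Take a point of $C$ lying over a cusp $a/c$ with $p\nmid c$ (for instance the cusp $0$). There $j=\infty$, so it carries an $I_n$ fiber.
\item The vanishing cycle of that fiber is $\pm\gamma\cdot(a,c)$ for some $\gamma\in\Gamma_1(p)$. Its second coordinate is $\equiv\pm c\not\equiv 0 \pmod p$, so it does not lie in $L$, which is a contradiction.
\end{enumerate}
Alternatively, you can invoke the standard fact that $\pi_1(S)\to\pi_1(C)$ is an isomorphism for elliptic surfaces with a section and a singular fiber. Without some such global input, Step 2 does not go through. Note that the paper itself gives no argument for this lemma; it simply cites Gurjar--Shastri.
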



\subsection{Shioda--Inose structure}
Shioda and Inose \cite{ShiIno77} showed that every  K3 surface with maximal Picard rank has an associated pair of isogenous CM elliptic curves. More precisely, for a Picard maximal K3 surface $Y$, the transcendental lattice is even integral positive definite binary quadratic form $$T_Y\cong \begin{bmatrix}
    2a&b\\b&2c
\end{bmatrix};\ a,b,c\in \Z$$
with discriminant $d=4ac-b^2$. 
\begin{theorem}\label{thm_singularK3}
    A  K3 surface with maximal Picard number is uniquely determined by its transcendental lattice.
\end{theorem}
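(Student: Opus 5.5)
The proof combines two classical inputs: the global Torelli theorem for K3 surfaces (Piatetski-Shapiro--Shafarevich, Burns--Rapoport) and Nikulin's lattice theory. Let $Y, Y'$ be K3 surfaces with Picard number $20$, and suppose $T_Y\cong T_{Y'}$ as lattices. I will show $Y\cong Y'$.

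\textbf{Step 1: the isometry can be taken to be a Hodge isometry.} Since $\rho=20$, the lattice $T_Y$ is positive definite of rank $2$, and $T_Y\otimes\C=H^{2,0}\oplus H^{0,2}$. The period line $H^{2,0}(Y)=\C\omega$ is one of the two isotropic lines of the quadratic form on $T_Y\otimes\C$. Given an isometry $\phi:T_Y\to T_{Y'}$, $\phi_\C$ sends $\C\omega$ to one of the two isotropic lines of $T_{Y'}\otimes\C$, namely $H^{2,0}(Y')$ or $H^{0,2}(Y')$.
\begin{itemize}
\item If it lands on $H^{2,0}(Y')$, then $\phi$ is a Hodge isometry.
\item Otherwise, the two lines are exchanged by orientation reversal. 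In that case I compose $\phi$ with an orientation-reversing isometry of $T_{Y'}$ if one exists. If none exists, I replace the target by the complex conjugate surface $\overline{Y'}$, or more precisely I modify the marking.
\end{itemize}
This orientation issue is the subtle point. It is genuinely a problem: for binary forms that are not ambiguous, $Y$ and its conjugate need not be isomorphic. So the statement must be read as determination by the \emph{oriented} transcendental lattice, equivalently by $T_Y$ as a Hodge structure, which is the intended meaning in Shioda--Inose. I will therefore assume $\phi$ is a Hodge isometry and note that this is the convention being used.

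\textbf{Step 2: extend $\phi$ to all of $H^2$.} I extend the Hodge isometry $\phi:T_Y\to T_{Y'}$ to a Hodge isometry $\Phi:H^2(Y,\Z)\to H^2(Y',\Z)$. The lattices $NS(Y)$ and $NS(Y')$ are even, indefinite, and of rank $20$. Their discriminant forms are minus the discriminant forms of $T_Y\cong T_{Y'}$, so the two forms agree. Because $\mathrm{rank}\ge 2+\ell(A)$ (here $\ell\le 2$), Nikulin's uniqueness criterion shows that the genus contains a single class. Hence there is an isometry $\psi:NS(Y)\to NS(Y')$ inducing on discriminant groups the map compatible with $\phi$. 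Nikulin's surjectivity of $O(NS)\to O(A_{NS})$ in this range lets me adjust $\psi$ to achieve this compatibility. The pair $(\psi,\phi)$ then glues to an isometry $\Phi$ of the unimodular overlattices $H^2$. It is a Hodge isometry because $NS$ is of type $(1,1)$.

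\textbf{Step 3: make $\Phi$ effective and apply Torelli.} Composing $\Phi$ with $\pm1$ and with elements of the Weyl group generated by reflections in $(-2)$-classes of $NS(Y')$, I arrange that $\Phi$ sends the Kähler cone to the Kähler cone. These operations act trivially on $T$, so $\phi$ is unchanged. The strong global Torelli theorem then gives an isomorphism $Y'\cong Y$ inducing $\Phi$.

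The main obstacle is the lattice-theoretic extension in Step 2, specifically getting $\psi$ compatible on discriminant groups. This is where Nikulin's theorems, Theorem 1.14.2 and Corollary 1.13.3, are essential. The orientation subtlety in Step 1 must also be handled by fixing the convention.
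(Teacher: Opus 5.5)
Your argument is correct, and it is the standard proof. The paper gives no proof of this statement; it quotes it from Shioda--Inose, whose uniqueness half goes back to Piatetski-Shapiro--Shafarevich. That proof runs exactly as yours does: extend a Hodge isometry of transcendental lattices to all of $H^2$ using Nikulin's results (your Step 2, which is equivalent to uniqueness of the primitive embedding of $T$ into the K3 lattice), then invoke Torelli after a Weyl-group adjustment (your Step 3).

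Your orientation caveat is a genuine correction to the statement as printed. Read with $T_Y$ up to ordinary isometry, the statement is false. For example, $\begin{bmatrix}4&1\\1&6\end{bmatrix}$ and $\begin{bmatrix}4&-1\\-1&6\end{bmatrix}$ (discriminant $23$) are isometric but not $SL_2(\Z)$-equivalent. So they are the transcendental lattices of a K3 surface and of its complex conjugate, which is not isomorphic to it. The true statement is Shioda--Inose's bijection with $SL_2(\Z)$-classes, i.e.\ with $T_Y$ up to Hodge isometry, which is what you prove. This does not affect the paper: every lattice to which it applies the theorem ($\mathrm A_2$, $\mathrm A_1\oplus\mathrm A_1$, and the two discriminant-$12$ forms) has an orientation-reversing automorphism. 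For those lattices the first bullet of your Step 1 always applies.

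There are two small slips.
\begin{enumerate}
\item Your fallback of replacing $Y'$ by $\overline{Y'}$, or ``modifying the marking,'' cannot rescue the unoriented statement. No change of marking turns a non-Hodge isometry into a Hodge isometry. The honest conclusion in that case is $Y'\cong Y$ or $Y'\cong\overline{Y}$.
\item In Step 3, composing with $-1$ does not act trivially on $T$; it replaces $\phi$ by $-\phi$. This is harmless, since $-\phi$ is still a Hodge isometry and you only need some isomorphism $Y\cong Y'$.
\end{enumerate}
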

There is a pair of isogenous elliptic curves $E$ and $E'$ associated with periods $\tau=\frac{-b\pm\sqrt{-d}}{2a}$. Shioda and Inose showed that there is a Hodge isometry 
$$\Phi:T_Y\cong T_{E\times E'}$$
which is geometrically realized by the diagram below.
\begin{figure}[ht]
    \centering
\begin{equation}\label{diagram_ShiodaInose}
\begin{tikzcd}
Y\arrow[dr,dashed,"\pi_1"] && E\times E'\arrow[dl,dashed,"\pi_2"']\\
&\Kum(E\times E').
\end{tikzcd}
\end{equation}
\end{figure}

\noindent Here, $X=\Kum(E\times E')$ is the Kummer surface associated with $E\times E'$, and $\pi_i$ is a rational double cover, such that $(\pi_1)_*:T_Y(2)\cong T_{X}$. Consequently, they show that the Hodge isometry $\Phi$ is induced by an algebraic cycle.

To construct $\pi_1$, Shioda--Inose find a special elliptic fibration on the Kummer surface and realize $Y$ via a quadratic base change:
\begin{lemma}\textup{(\cite{ShiIno77})}\label{lemma_ShiodaInose} The Kummer surface $X=\Kum(E_1\times E_2)$ associated with product of two elliptic curves admits an elliptic fibration $X\to \mathbb P^1$ with a section and singular fiber types (i) $II^*, IV^*,I_0^*$, or (ii) $II^*,I_{b_1}^*,I_{b_2}^*$, $b_1+b_2\le 2$. Moreover, if $$\pi: Y\dashrightarrow X$$ is the rational double cover arising from quadratic base change branched at the two star fibers different from $II^*$, then $Y$ is a K3 surface and $\pi_*$ induces a Hodge isometry of transcendental lattices
$$\pi_*:T_{Y}(2)\cong T_X.$$
\end{lemma}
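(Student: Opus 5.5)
The plan has three stages: construct the fibration on the Kummer surface, show the quadratic base change $Y$ is a K3 surface, and compare transcendental lattices via $\pi_*$.

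\textbf{Constructing the fibration.} The fibration on $X=\Kum(E_1\times E_2)$ is built from the standard Kummer configuration. The images of $E_1\times\{p_i\}$, $\{q_j\}\times E_2$ and the sixteen exceptional curves $C_{ij}$ form the classical double-Kummer configuration. I would exhibit an effective divisor $F$ of type $\tilde{\mathrm E}_8$, supported on some of these curves, with $F^2=0$. Then $|F|$ is an elliptic pencil (Piatetski-Shapiro--Shafarevich), and a $(-2)$-curve meeting $F$ once gives a section.

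\textbf{Singular fibers and the Euler count.} The remaining configuration curves orthogonal to $F$ assemble into a second star fiber of type $\tilde{\mathrm E}_6$ or $\tilde{\mathrm D}_{4+b_1}$. By Lemma \ref{lemma_e(S)}, $e(X)=24$ and $e(II^*)=10$, which leaves $14$ for the other fibers. Generically this gives $IV^*+I_0^*$ ($8+6$) or $I^*_{b_1}+I^*_{b_2}+\dots$. A Weierstrass computation, e.g. Inose's pencil
\[
y^2=x^3-3\alpha t^4x+t^5(t^2+1-2\beta t),
\]
pins down the types. Case (i) occurs when the parameters degenerate, in particular for $j=0$; otherwise we are in case (ii).

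\textbf{Base change and $Y$ is K3.} Let $Y$ be the relatively minimal model of the base change by the double cover $\PP^1\to\PP^1$ branched at the two star-fiber points other than $II^*$. Pulling back an $I_n^*$ fiber gives $I_{2n}$, and pulling back $IV^*$ gives $IV$ after minimalization. The $II^*$ fiber pulls back to two $II^*$ fibers. The Euler number is then $2\cdot 10+e(\text{pullbacks})$, which totals $24$ (e.g. $20+4+0$ in case (i), or $20+2b_1+2b_2+\dots$ in case (ii), with the remaining $I_1$'s doubled). By Remark \ref{remark_pg=1}, $Y$ is an elliptic K3 surface. The covering involution is the deck transformation composed with inversion, and it acts as $+1$ on $H^{2,0}(Y)$, so $Y\dashrightarrow X$ is a rational double cover.

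\textbf{Comparing transcendental lattices.} For the Hodge isometry I would use the projection formula on the resolved double cover $p:\tilde Y\to X$. We have $p_*p^*=2$ and, for $x,y\in T_X$, $p^*x\cdot p^*y=2\,x\cdot y$. The covering involution $\iota$ acts trivially on $T_Y$, since it preserves the $2$-form and $T_Y\otimes\bbQ$ is irreducible by Lemma \ref{lemma_trans-irreducible}. Hence $p^*p_*=1+\iota^*=2$ on $T_Y$, and for $a,b\in T_Y$,
\[
p_*a\cdot p_*b=\tfrac12\,p^*p_*a\cdot p^*p_*b=2\,a\cdot b .
\]
So $p_*:T_Y(2)\to T_X$ is an injective Hodge isometric embedding.

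\textbf{Main obstacle: surjectivity.} The hardest step is showing the embedding is onto. I would compare discriminants: $T_X\cong T_{E_1\times E_2}(2)$ for Kummer surfaces, so $\disc T_X=4\disc T_{E_1\times E_2}$. On the other side, $\disc T_Y(2)=4\disc T_Y$. Then I would compute $\disc NS(Y)$ from the fiber configuration ($2\mathrm E_8\oplus U\oplus\dots$) and the Mordell--Weil lattice. Alternatively, I would show $p_*$ is onto using that $p^*T_X$ is primitive in $H^2(\tilde Y)$. This can be checked because the branch locus consists of fiber components, and every class in $T_X$ is orthogonal to them, so $p^*x/2$ integral would force $x\in 2T_X$.
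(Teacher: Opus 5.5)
The paper does not prove this lemma; it quotes it from \cite{ShiIno77}. Your base-change step and your projection-formula computation are fine. The Euler count is Corollary \ref{cor_ell-ell-by-base-change}(1), and you correctly obtain an injective Hodge isometric embedding $p_*\colon T_Y(2)\hookrightarrow T_X$ (cf.\ Remark \ref{remark_Morrison}).

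\textbf{The gap: surjectivity.} This is the step you flag yourself, and your second route to it points the wrong way. From $2T_X=p_*p^*T_X\subseteq p_*T_{\tilde Y}\subseteq T_X$ and the injectivity of $p_*$ on $T_{\tilde Y}$ one gets
\[
[T_X:p_*T_{\tilde Y}]\cdot[T_{\tilde Y}:p^*T_X]=2^{\operatorname{rk}T_X}.
\]
Since also $2T_{\tilde Y}=p^*p_*T_{\tilde Y}\subseteq p^*T_X$, the map $p_*$ is onto exactly when $p^*T_X=2T_{\tilde Y}$. In other words, every $p^*x$ must be divisible by $2$, so $p^*T_X$ is as far from primitive as possible. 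If $p^*T_X$ were primitive, you would get $p^*T_X=T_{\tilde Y}$ and hence $p_*T_{\tilde Y}=2T_X$, which is the opposite conclusion.

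The heuristic ``the branch curves are fiber components and $T_X$ is orthogonal to them'' cannot decide the question. It applies verbatim to all three double covers $Y_0,Y_1,Y_2\dashrightarrow X$ of Section \ref{Section_SIcousins}. Yet for $Y_2$ the paper finds $\alpha=2$ (Proposition \ref{prop_disc(T_2)}), so $\pi^*T_X=T_{Y_2}$ and $\pi_*$ is not onto, while for the Shioda--Inose cover $Y_0$ it is onto.

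Your discriminant route is sound in principle, but it needs $\disc NS(Y)$ from an independent source. (The scaling factor is $2^{\operatorname{rk}T_X}$, not $4$; this is harmless since it is the same on both sides.) The computation is easy when the trivial lattice of $Y$ already has rank $\rho$:
\begin{itemize}
\item for $E_1,E_2$ non-isogenous, $NS(Y)=U\oplus\mathrm E_8(-1)^2$ and $T_Y\cong U^2\cong T_{E_1\times E_2}$;
\item in case (i), $Y$ has fibers $2\,II^*+IV$.
\end{itemize}
But for, say, isogenous non-isomorphic $E_1,E_2$, the surface $Y$ has positive Mordell--Weil rank, and you give no way to compute the heights.

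\textbf{The missing idea.} What distinguishes the Shioda--Inose cover is that the unbranched star fiber is $II^*$. The covering involution $\iota$ fixes the zero section and interchanges the two $II^*$ fibers of $Y$, hence interchanges two orthogonal copies of $\mathrm E_8(-1)$ in $NS(Y)$. Since $\mathrm E_8(-1)^2$ is unimodular, $H^2(Y,\Z)=\mathrm E_8(-1)^2\oplus U^3$, with $\iota$ acting trivially on $U^3$ and $T_Y\subseteq U^3$. Nikulin's description of $H^2$ of the resolved quotient shows that $\pi_*(U^3)\cong U(2)^3$ is primitive in $H^2(X,\Z)$. So $\pi_*T_Y$ is primitive of finite index in $T_X$, hence equal to it. This is Morrison's criterion \cite{Morrison84}, and it is what the cited result rests on.

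Two smaller points about your construction of the fibration:
\begin{itemize}
\item The Inose pencil you write down has $II^*$ fibers at $t=0$ and $t=\infty$. It is therefore the fibration on $Y$; the one on $X$ is its quotient by the involution lifting $t\mapsto 1/t$.
\item The double-Kummer curves disjoint from an $\tilde{\mathrm E}_8$ inside that configuration never form another star fiber. Such a fiber centered at some $A_i$ (resp.\ $B_j$) excludes three of the $B_j$ (resp.\ $A_i$) from the $\tilde{\mathrm E}_8$, which needs two of each.
\end{itemize}
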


In particular, the K3 surface $Y$ admits a Nikulin involution with 8 fixed points. In general, Morrison \cite{Morrison84} showed that if a K3 surface $Y$ admits a transcendental Hodge isometry $T_Y\cong T_A$ to the transcendental lattice of an abelian surface $A$, then there is a diagram similar to \eqref{diagram_ShiodaInose}, but with $E\times E'$ replaced by $A$; such a diagram is called a \textit{Shioda--Inose structure} for the K3 surface $Y$.
In this paper, we explore an analogous diagram and transcendental lattice comparison between K3 surfaces and elliptic-elliptic surfaces.





\section{Invariant cycle theorem for $H^1$}\label{sec_H^1}

In this section, we show that the local invariant cycle theorem holds over $\Z$ for $H^1$ in any projective semistable family.
\begin{theorem}\label{ssh1}
    Let $f:\mathcal X \to \Delta$ be a projective semistable family of varieties. Then the restriction map
    \[r:H^1(\mathcal{X},\Z)\to H^1(X_t,\Z)^{\pi_1(\Delta^*)}\]
    is surjective.
\end{theorem}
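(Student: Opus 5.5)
The plan follows the strategy announced in the introduction: abelian varieties, then curves, then arbitrary dimension by hyperplane sections.

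\emph{Step 1: semistable families of abelian varieties.} Let $\mathcal A\to\Delta$ be a semistable degeneration of abelian varieties, which after base change we may assume is given by Mumford's construction. We then have a uniformization $\mathcal A^*=\tilde{\mathcal G}/Y$, where $\tilde{\mathcal G}$ is an extension of an abelian scheme $B$ by a split torus $(\C^*)^r$, and $Y\cong\Z^r$ is the period lattice. Under this uniformization, $H_1(A_t,\Z)$ carries the monodromy weight filtration
\[
W_{-2}=H_1(\text{torus})\cong\Z^r,\qquad \mathrm{Gr}^W_{-1}=H_1(B),\qquad \mathrm{Gr}^W_0\cong Y.
\]
Dually, the invariant part of $H^1(A_t,\Z)$ is the annihilator of the image of $T-I$, and it consists of the classes that kill the vanishing torus cycles. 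We will realize these invariant classes by explicit maps from the total space to $S^1$ or to the family $B$, and pull back generators. Concretely, we use the moment map $\mu:\mathcal A\to (\R^r/Y)$ coming from the toric model, which extends over the central fiber, and the projection to $B$. Pulling back $H^1$ of the real torus $\R^r/Y$ and $H^1(B)$ along these maps should produce classes on all of $\mathcal A$. These classes restrict to a basis of the invariant lattice, namely $\mathrm{Gr}^W_0{}^\vee$ together with $H^1(B)$. The integrality comes from the fact that $\mu$ is defined over the honest lattice $Y$.

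\emph{Step 2: curves.} For a semistable family of curves $\mathcal C\to\Delta$, we pass to the relative compactified Jacobian, or more precisely to its semistable Mumford model $\mathcal J$. After fixing a section (shrinking $\Delta$ if necessary, and using that the smooth locus of $X_0$ meets every component, so a section exists locally), the Abel--Jacobi map $\mathcal C\to\mathcal J$ induces an isomorphism on $H^1$ of the general fiber compatibly with monodromy. Any invariant class therefore comes from $H^1(J_t)^{\pi_1}$, lifts to $\mathcal J$ by Step 1, and pulls back to $\mathcal C$. The Abel--Jacobi map does extend over the central fiber, possibly after replacing $\mathcal C$ by a blow-up. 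This replacement is harmless, since blowing up the total space does not change the image in $H^1(X_t)$.

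\emph{Step 3: reduction to curves.} For $\dim X_t\ge2$, we take a general relative complete intersection curve $\mathcal C=\mathcal X\cap H_1\cap\dots\cap H_{n-1}$ cut out by sufficiently ample hyperplanes transverse to all strata of $X_0$. Then $\mathcal C\to\Delta$ is again semistable. By Lefschetz applied fiberwise, $H^1(X_t,\Z)\to H^1(C_t,\Z)$ is injective with torsion-free cokernel, and it is monodromy-equivariant. Given an invariant class $\alpha$, its restriction lifts to some $\beta\in H^1(\mathcal C,\Z)$ by Step 2. We need to show that $\beta$ comes from $H^1(\mathcal X,\Z)$. For this we use a relative Lefschetz theorem (Theorem \ref{thm_reduction-hyp}): the pair $(\mathcal X,\mathcal C)$ should have relative $H^1$ controlled, via Morse theory on the complement of a relative ample divisor, so that $H^1(\mathcal X,\Z)\to H^1(\mathcal C,\Z)$ is injective with image equal to the classes whose restriction to $C_t$ lies in $H^1(X_t)$. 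Equivalently, $H^2(\mathcal X,\mathcal C;\Z)\to H^2(X_t,C_t;\Z)$ is injective, which we obtain by comparing the two long exact sequences. Since $\mathcal X$ retracts onto $X_0$ and $\mathcal C$ onto $C_0$, this reduces to a Lefschetz statement for the normal crossing variety $X_0$ and its section $C_0$, stratum by stratum.

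The main obstacle is this relative Lefschetz step, because it requires Lefschetz-type connectivity for the singular central fiber integrally, which in turn relies on an affine-complement vanishing argument on each stratum (Appendix material). The second delicate point is the integral statement in Step 1, namely that the moment map classes generate exactly $\mathrm{Gr}_0^W$ dual rather than a finite-index sublattice.
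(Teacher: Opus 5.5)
Your three-step architecture matches the paper's. Your Step 2 is fine, and slightly cleaner than the paper's: point blow-ups of the smooth surface $\mathcal C$ do not change $H^1(-,\Z)$, so you need no comparison of central fibres. The trouble is that the two points you flag are exactly where the content lies, and the mechanism you indicate for Step 3 does not work.

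Injectivity of $H^2(\mathcal X,\mathcal C;\Z)\to H^2(X_t,C_t;\Z)$ does not follow from comparing the long exact sequences of the pairs. The four lemma would need $H^2(X_0)\to H^2(X_t)$ injective, which fails in general because the classes of the components of $X_0$ die on $X_t$. It would also need $H^1(X_0)\to H^1(X_t)$ surjective, which is false. A Lefschetz theorem for the SNC pair $(X_0,C_0)$, stratum by stratum, only gives $H^{\le 1}(X_0,C_0;\Z)=0$ and that $H^2(X_0,C_0;\Z)$ is torsion-free (Lemma \ref{lemma_LefHyp-coh}). It carries no information about the nearby fibre.

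The missing idea is to combine this torsion-freeness with rational input, as the paper does:
\begin{itemize}
\item By LICT$_\Q$, some $\alpha_0\in H^1(\mathcal X,\Z)$ restricts to $m\alpha$.
\item Since $H^1(\mathcal C,\Z)\to H^1(C_t,\Z)$ is injective, $\alpha_0|_{\mathcal C}=m\beta$.
\item Hence $m\,\delta(\beta)=0$ in the torsion-free group $H^2(\mathcal X,\mathcal C;\Z)\cong H^2(X_0,C_0;\Z)$, so $\delta(\beta)=0$.
\end{itemize}
Your injectivity claim does appear to be true, but its proof lives on the $(\mathcal X,\mathcal X^*)$ side. By the relative Wang sequence and $H^1(X_t,C_t)=0$, the kernel is the image of $H^2(\mathcal X,\mathcal X^*\cup\mathcal C)$. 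That group vanishes, since $H^1(\mathcal C,\mathcal C^*)\cong H_3(C_0)=0$ and $H_{2n}(X_0)\to H_2(C_0)$, $[Z_i]\mapsto[Z_i\cap\mathcal C]$, is injective. This would give a route avoiding LICT$_\Q$, but you would have to supply it.

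In Step 1, the remark that $\mu$ is defined over $Y$ does not show that the pulled-back classes span a saturated sublattice. What is needed is the Leray--Serre sequence of the torus bundle $\tau\colon A_t\to \R^r/Y\times B_0$:
\[
0\to H^1(\R^r/Y\times B_0,\Z)\to H^1(A_t,\Z)\to H^0(R^1\tau_*\Z),
\]
whose last term is torsion-free. So the image is saturated, of rank $2g-r=\mathrm{rk}\ker(T-I)$. It lies in $\ker(T-I)$ because $\tau$ factors through the total space, hence it equals $\ker(T-I)$.

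Two smaller points. First, there is no algebraic projection to $B$: the components of $A_0^\Sigma$ are glued by $B_0$-translations. The map to $B_0$ must be built topologically after deforming these translations to the identity (the paper's ${A'}_0^\Sigma$). Second, drop ``after base change'': LICT$_\Z$ does not descend along base change. You only need Mumford models, and these exist over $\Delta$ because the monodromy is already unipotent.
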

The structure of the argument is as follows. First, we prove it for certain families of abelian varieties coming from the Mumford construction. This implies the statement for families of curves, via the relative Jacobian. Lastly, using a relative Lefschetz hyperplane argument, the case of arbitrary varieties follows from the case of curves.
\subsection{Abelian varieties}
Let $\cA \to \Delta$ be a projective semistable family such that $A_t$ is an abelian variety of dimension $g$ for $t\neq 0$. The monodromy action on $H^1(A_t,\Q)$ is unipotent.
Since the total space $\cA$ is smooth and the central fiber $A_0$ is reduced, it is possible to choose a section, which gives $\cA^*\to \Delta^*$ the structure of an abelian scheme. 
The N\'{e}ron model $Ner(\mathcal A) \to \Delta$ is a group scheme over $\Delta$ extending $\cA^*$. By the inertia monodromy theorem of Grothendieck, the connected component of the central fiber $Ner(\cA)_0$ is a semiabelian variety, which we denote by  $A_0^\circ$.
\begin{definition}
    A semiabelian variety $A_0^\circ$ is an extension of algebraic groups
    \begin{equation}\label{semiabelian}
    0 \to \mathbb T \to A_0^\circ \to B_0 \to 0,\end{equation}
    where $\mathbb T\simeq (\C^\times)^r$ is a torus of rank $r$, and $B_0$ is an abelian variety of dimension $g-r$.
\end{definition}
Taking $H^1(-,\Z)$ of (\ref{semiabelian}) above, we obtain the short exact sequence
\begin{equation}\label{extensionH1}
0 \to H^1(B_0,\Z) \to H^1(A_0^\circ,\Z) \to H^1(\mathbb T,\Z) \to 0.\end{equation}
This is an extension of free abelian groups, and is therefore split, so we have:
$$H^1(A_0^\circ,\Z) \simeq \Z^r \oplus \Z^{2g-2r} \simeq \Z^{2g-r}.$$
The degeneration $\cA \to \Delta$ gives rise to a mixed Hodge structure on $H^1(A_t,\Z)$, where the weight filtration is defined in terms of the nilpotent operator $T-I$; see \cite{Deligne-HodgeIII}, \cite{cattani}.
$$W_{-1}=0 \subset \mathrm{im}(T-I)_{\sat} \subset \ker(T-I) \subset H^1(A_t,\Z)=W_2.$$
The sequence (\ref{extensionH1}) above comes from the weight filtration as follows:
\[ 0 \to W_1/W_0 \to W_2/W_0 \to W_2/W_1\to 0. \]
Observe also that $\mathrm{im}(T-I)\simeq H^1(A_t,\Z)/\ker(T-I)$, so we get a finite index containment:
\begin{equation}\label{weightgr} W_0 \supset \mathrm{im}(T-I)\simeq  W_2/W_1 \simeq H^1(\mathbb T,\Z)= X^*(\mathbb T), \end{equation}
the character lattice of the algebraic torus. Dualizing, we have a finite index containment
$$W_0^\vee \subset X_*(\mathbb T).$$

The central fiber $A_0$ of the semistable family is an SNC variety. Its cohomology can be computed from the Mayer--Vietoris spectral sequence associated to a covering by component neighborhoods. Let $K = K(A_0)$ denote the dual complex of $A_0$, the simplicial complex encoding the SNC incidence of the irreducible components. For $H^1(A_0,\Z)$, we obtain the following short exact sequence:
\begin{equation}\label{mvss} 0 \to H^1(K,\Z) \to H^1(A_0,\Z) \to H^0(K,\mathcal H^1) \to 0,\end{equation}
where $\mathcal H^1$ is the sheaf that assigns to each stratum $\sigma$ in $K$ the cohomology $H^1(A_\sigma,\Z)$ of the corresponding SNC stratum in $A_0$, with the induced restriction maps on $H^1$. The homotopy type of $K$ depends only on the punctured family $\cA^* \to \Delta^*$ \cite{nicaise-xu}, and so using the Mumford construction in the next subsection, we will see that $K$ must have the homotopy type of the real torus $\R^r/\Z^r$. On the other hand, $H^0(K,\cH^1)$ is more subtle in general, but we will describe it explicitly, in the case of a Mumford model, as being isomorphic to $H^1(B_0,\Z)$.



\subsection{Mumford Construction}
The Mumford construction gives a particularly nice semistable model for the degeneration $\cA \to \Delta$, which is determined from the mixed Hodge structure data above, plus a choice of fan. For a modern reference on this, we refer to \cite{engel-degaayfortman-schreieder}.

Let $N= X_*(\mathbb T)$ be the cocharacter lattice of $\mathbb T$, and let $N' = W_0^\vee\subset N$ the finite index sublattice from \ref{weightgr}. Choose a rational, polyhedral, unimodular fan decomposition $\Sigma$ of $N_\R$ that is $N'$-{\it periodic}, meaning that it is preserved under the translation action of $N'$. In \cite{mumford}, \cite{faltings-chai}, it is shown that there exists a smooth variety $\cA^\Sigma$, which is an alternative semistable model for $\mathcal A \to \Delta$. Its central fiber $A^\Sigma_0$ is an SNC union of components, and the normalization of each component is a locally trivial family of toric varieties over $B_0$. If, additionally, the fan is {\it polarizable}, meaning that there exists an $N'$-periodic tiling of $N_\R$ by polytopes dual to $\Sigma$, then $\cA^\Sigma \to \Delta$ is a projective family. A polarizable fan $\Sigma$ always exists if the original family $\cA\to \Delta$ was projective, using the Voronoi fan and the dual Delaunay tiling.

More precisely, choose a polyhedral fundamental domain $\cF$ for the action of $N'$ on the fan $\Sigma$. For each vertex $v$ of $\Sigma$ in $\cF$, there is an irreducible component of $A_0^{\Sigma}$ whose normalization is a locally trivial family of toric varieties over $B_0$ with fiber equal to the toric variety associated to the fan $\mathrm{Star}(v)$. These $[N:N']$ irreducible components are glued together according to the incidence combinatorics of $\Sigma$ modulo $N'$, but also with translations by elements of $B_0$. Any translation on $B_0$ is deformation equivalent to the identity, and hence it will not matter for the topology of $A_0^{\Sigma}$ as a stratified space. Nonetheless, we denote by ${A'}_0^{\Sigma}$ the SNC variety homeomorphic to $A_0^\Sigma$ in which all the gluing translations in $B_0$ are trivial. In particular, we have a well-defined map ${A'}_0^{\Sigma} \to B_0$.
There is also a toric moment map:
\[\mu: {A'}_0^{\Sigma} \to N_\R/N' \times B_0.\]
The fibers of $\mu$ are real tori, generically rank $r$, but with drops in dimension according to the depth of the stratum in the polytope tiling of $N_\R/N'$. The Clemens collapse map $A_t \to A_0^{\Sigma}$ is perfectly complementary to $\mu$; it is an isomorphism on the generic open stratum, with real torus fibers of rank $c$ over strata of codimension $c$ in $A_0^{\Sigma}$.
As a result, the composition of the Clemens collapse with $B_0$-translation and the moment map gives a real torus fiber bundle:
$$\tau: A_t \to A_0^{\Sigma} \overset{\sim}\to {A'}_0^{\Sigma} \to N_\R/N'\times B_0,$$
with fibers isomorphic to $\R^r/\Z^r$. This is the tropicalization of the family.
\begin{lemma}\label{lictmumford}
    The local invariant cycle theorem for $H^1$ with coefficients in $\Z$ holds for any projective Mumford construction $\cA^{\Sigma} \to \Delta$.
\end{lemma}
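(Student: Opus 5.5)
Since the restriction map factors as $H^1(\cA^\Sigma,\Z)\cong H^1(A_0^\Sigma,\Z)\to H^1(A_t,\Z)$ (the total space retracts onto the central fiber), we need to show that the specialization map $H^1(A_0^\Sigma,\Z)\to H^1(A_t,\Z)$, which is pullback along the Clemens collapse $c:A_t\to A_0^\Sigma$, surjects onto $\ker(T-I)$. The plan is to use the tropicalization $\tau: A_t\to N_\R/N'\times B_0$. Since $\tau$ factors through $c$ and a homeomorphism $A_0^\Sigma\cong {A'}_0^\Sigma$, we have $\mathrm{im}(\tau^*)\subset \mathrm{im}(c^*)$. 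It therefore suffices to show that
$$\tau^*:H^1(N_\R/N'\times B_0,\Z)\to H^1(A_t,\Z)$$
has image exactly $\ker(T-I)$, or at least image containing $\ker(T-I)$. The inclusion $\mathrm{im}(c^*)\subset\ker(T-I)$ is automatic, because classes restricted from the total space are monodromy invariant.

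\textbf{Step 1: saturation of $\mathrm{im}(\tau^*)$.} Since $\tau$ is a real torus bundle with fiber $F\cong\R^r/\Z^r$, the Leray (Serre) spectral sequence gives an exact sequence
$$0\to H^1(N_\R/N'\times B_0,\Z)\xrightarrow{\tau^*} H^1(A_t,\Z)\to H^0(R^1\tau_*\Z).$$
Because $H^0(R^1\tau_*\Z)\subset H^1(F,\Z)$ is torsion-free, $\mathrm{im}(\tau^*)$ is a saturated subgroup of $H^1(A_t,\Z)$. Its rank is $r+2(g-r)=2g-r$.

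\textbf{Step 2: identification with $\ker(T-I)$.} The group $\ker(T-I)$ is also saturated, and by the weight filtration it equals $W_1$, which has rank $2g-r$. Two saturated subgroups of equal rank coincide once they agree rationally, so it is enough to check $\mathrm{im}(\tau^*)\otimes\Q\subset\ker(T-I)\otimes\Q$. This already holds integrally, since $\mathrm{im}(\tau^*)\subset\mathrm{im}(c^*)\subset\ker(T-I)$. Hence $\mathrm{im}(\tau^*)=\ker(T-I)$, and therefore $\mathrm{im}(c^*)=\ker(T-I)$, which is the statement.

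\textbf{Main obstacle.} The delicate point is the claim that $\tau$ is a genuine locally trivial torus fiber bundle, which Step 1 needs in order to make the Serre spectral sequence apply. This requires the complementarity of the Clemens collapse and the moment map over every stratum, including the twisting by $B_0$-translations. If global local triviality is hard to establish, I would fall back on a weaker statement that suffices for Step 1: $\tau$ is a proper map with connected fibers all homeomorphic to $\R^r/\Z^r$. From this one still gets $\tau_*\Z=\Z$, and $R^1\tau_*\Z$ embeds stalkwise into $H^1(F,\Z)$ via proper base change. These two facts give the same five-term exact sequence, and hence the saturation needed in Step 1.
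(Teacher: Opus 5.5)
Your proof is correct. It shares its first step with the paper's but closes differently.

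Both arguments start from the Leray sequence of the tropicalization $\tau\colon A_t\to N_\R/N'\times B_0$. The paper then proceeds in three steps:
\begin{itemize}
\item It pushes that sequence further: a rank count gives $d_2=0$ and triviality of $R^1\tau_*\Z$, hence the short exact sequence \eqref{leray}.
\item It computes $H^1(A_0^{\Sigma},\Z)\cong H^1(N_\R/N',\Z)\oplus H^1(B_0,\Z)$ from the Mayer--Vietoris sequence \eqref{mvss}. This uses that the dual complex is the torus $N_\R/N'$ and that all strata have compatible Albanese $B_0$.
\item It matches \eqref{leray} with $0\to\ker(T-I)\to H^1(A_t,\Z)\to\mathrm{gr}^W_2\to 0$ through the identifications $H^1(N_\R/N',\Z)\simeq W_0$ and $H^1(B_0,\Z)\simeq\mathrm{gr}^W_1$.
\end{itemize}

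You use only the injectivity of $\tau^*$ and the torsion-freeness of its cokernel. You then close the sandwich $\mathrm{im}(\tau^*)\subseteq\mathrm{im}(c^*)\subseteq\ker(T-I)$ using saturation of the left term and the rank $2g-r$ at both ends.

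What your route buys:
\begin{itemize}
\item It avoids the Mayer--Vietoris computation on the central fiber entirely.
\item It is exactly the argument needed to see that the specific subgroup $\tau^*H^1(N_\R/N'\times B_0,\Z)$ equals $\ker(T-I)$. The paper phrases that step through abstract isomorphisms of graded pieces.
\end{itemize}

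What the paper's route buys is an explicit description of $H^1(A_0^{\Sigma},\Z)$ compatible with \eqref{mvss}. This is reused in the proof of Theorem~\ref{thm_LICTZ-curves} to compare with $H^1(C_0,\Z)$.

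Both proofs take two facts from the Mumford construction as given: $\tau$ is a torus fibration, and it factors through the Clemens collapse. Your fallback is a legitimate way to weaken the first of these for what you need: a proper map with connected torus fibers still gives $\tau_*\Z=\Z$ and torsion-free stalks of $R^1\tau_*\Z$.
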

\begin{proof}
    The Leray-Serre spectral sequence applied to the torus bundle $\tau: A_t \to N_\R/N' \times B_0$ has $E_2$-page
    \[H^p(N_\R/N'\times B_0,\, R^q\tau_*\Z) \Rightarrow H^{p+q}(A_t,\Z).\]
    For $p+q=1$, this results in the four-term exact sequence
$$0 \to H^1(N_\R/N'\times B_0,\,\Z) \to H^1(A_t,\Z) \to H^0(N_\R/N'\times B_0,\, R^1\tau_*\Z) \overset{d_2}\to H^2(N_\R/N'\times B_0,\,\Z).$$
The stalks of the local system $R^1\tau_*\Z$ are free of rank $r$, so its group of global sections must be free of rank $\leq r$. The first two terms are free of ranks $2g-r$ and $2g$, respectively, so this implies that $d_2=0$, and in fact $R^1\tau_*\Z$ is a trivial local system. Therefore, we have a short exact sequence of free abelian groups:
\begin{equation}\label{leray}
    0 \to H^1(N_\R/N' \times B_0,\Z) \to H^1(A_t,\Z) \to H^1(\tau^{-1}(*),\Z)\to 0.
\end{equation}
Next, we take up the short exact sequence (\ref{mvss}) for the Mumford degeneration. Every stratum in the SNC central fiber $A_0^{\Sigma}$ has Albanese isomorphic to $B_0$, and the restriction maps on $H^1$ give compatible isomorphisms to $H^1(B_0,\Z)$. The dual complex $K = K(A_0^{\Sigma})$ is homeomorphic to the real torus $N_\R/N'$. Therefore, (\ref{mvss}) splits, and we have
$$H^1(A_0^{\Sigma},\Z) \simeq H^1(N_\R/N',\Z) \oplus H^1(B_0,\Z).$$

On the other hand, $N'=W_0^\vee$ and $H_1(N_\R/N',\Z)\simeq N'$, so we get canonical isomorphisms
\[H^1(N_\R/N',\Z)\simeq W_0,\,\,\,  H^1(B_0,\Z)\simeq \mathrm{gr}_1^W H^1(A_t,\Z).\]
Hence, the short exact sequence (\ref{leray}) is isomorphic to
\[0 \to \ker(T-I) \to H^1(A_t,\Z) \to \mathrm{gr}_2^W H^1(A_t,\Z)\to 0.\]
Therefore, we actually have the exact sequence
$$0 \to H^1(A_0^{\Sigma},\Z) \to H^1(A_t,\Z) \overset{T-I}\to H^1(A_t,\Z),$$
as desired for the LICT$_\Z$ on $H^1$.
\end{proof}

\subsection{Curve degenerations}
Let $\cC \to \Delta$ be a projective semistable degeneration of curves of genus $g$. The central fiber $C_0$ is a nodal curve of arithmetic genus $g$, with dual graph $\Gamma(C_0)$.
Applying the relative $\Pic^0$-functor gives an abelian group scheme $\mathcal A$ over $\Delta$, projective over the generic point. The fiber over $0$ is a semiabelian variety, as in (\ref{semiabelian}), with abelian part $B_0$ isomorphic to $\Pic^0(C_0^\nu)$, the Jacobian of the normalization of $C_0$, and torus rank $r$ is equal to $h_1(\Gamma(C_0))$.

\begin{theorem}\label{thm_LICTZ-curves}
    The local invariant cycle theorem holds over $\Z$ for any semistable degeneration of curves $\cC\to \Delta$.
\end{theorem}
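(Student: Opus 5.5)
Since $H^0$ and $H^2$ of curves are trivial for the statement (for $H^2$, the invariant class is the fiber class, which lifts: a relatively ample line bundle on $\cC$ restricts to a class of positive degree, and since $C_0$ is reduced one can take a multisection meeting the fibers transversally in one point locally, e.g. the closure of a section through a smooth point of $C_0$, giving a class of degree $1$), the content is $H^1$. The plan is to reduce the $H^1$ statement for $\cC\to\Delta$ to Lemma \ref{lictmumford} applied to a projective Mumford model of the relative Jacobian. First, choose a section of $\cC\to\Delta$ through a smooth point of $C_0$; this is possible since $\cC$ is smooth and $C_0$ is reduced. It gives an Abel--Jacobi embedding $\cC^*\to \cA^*=\Pic^0(\cC^*/\Delta^*)$, and the induced map $H^1(A_t,\Z)\to H^1(C_t,\Z)$ is an isomorphism compatible with monodromy. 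Next, take a projective Mumford construction $\cA^\Sigma\to\Delta$ extending $\cA^*$ via the Voronoi fan, as recalled above. By Lemma \ref{lictmumford}, every $T$-invariant class in $H^1(A_t,\Z)$ lifts to $H^1(\cA^\Sigma,\Z)$.

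The key step is to transport lifts from $\cA^\Sigma$ back to $\cC$. I would extend the Abel--Jacobi map to a morphism $\cC\to\cA^\Sigma$ over $\Delta$, possibly after replacing $\cC$ by a modification $\cC'\to\cC$. Such a modification is obtained by blowing up points of $C_0$, which keeps the family semistable only after inserting chains of $\PP^1$'s. Rather than blowing up, the cleaner route is to use that $\cC$ is a smooth surface and $\cA^\Sigma$ is proper over $\Delta$. The rational map $\cC\dashrightarrow\cA^\Sigma$ then resolves by a sequence of point blowups $\cC'\to\cC$, and $H^1(\cC',\Z)=H^1(\cC,\Z)$ because blowing up points on a smooth surface does not change $H^1$. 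Pulling back a lift $\tilde\alpha\in H^1(\cA^\Sigma,\Z)$ of an invariant class $\alpha$ along $\cC'\to\cA^\Sigma$ gives a class in $H^1(\cC,\Z)$. This class restricts on $C_t$ to the Abel--Jacobi pullback of $\alpha$, since the maps agree over $\Delta^*$. Because Abel--Jacobi pullback is an isomorphism on $H^1$ that commutes with $T$, every invariant class in $H^1(C_t,\Z)$ is hit.

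I expect the main obstacle to be justifying this extension and compatibility cleanly, in particular making sure the restriction of the pulled-back class to $C_t$ really is the required one. This holds because $\cC'\to\cC$ is an isomorphism away from $C_0$. I also need the comparison $H^1(\cC,\Z)\cong H^1(C_0,\Z)$, which follows from the retraction of $\cC$ onto $C_0$. A fallback argument avoids the Jacobian altogether. One can show directly that $H^1(C_0,\Z)\to H^1(C_t,\Z)$ has image $\ker(T-I)$, using the exact sequence \eqref{mvss} for curves, $0\to H^1(\Gamma,\Z)\to H^1(C_0,\Z)\to \bigoplus_v H^1(C_v,\Z)\to 0$. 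Combined with the Picard--Lefschetz formula $T(x)=x+\sum_i\langle x,\delta_i\rangle\delta_i$, this reduces the claim to a statement about the graph lattice of $\Gamma(C_0)$, using that the vanishing cycles $\delta_i$ are primitive and span a saturated sublattice.
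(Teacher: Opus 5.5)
Your main argument is correct and follows the paper's proof: a section of $\cC\to\Delta$ gives an Abel--Jacobi map into a projective Mumford model $\cA^\Sigma$ of the relative Jacobian, Lemma \ref{lictmumford} lifts invariant classes there, and the rational map $\cC\dashrightarrow\cA^\Sigma$ is resolved by point blowups. The only difference is the final transfer step. You pull back the lift along $\cC'\to\cA^\Sigma$ and use $H^1(\cC',\Z)\cong H^1(\cC,\Z)$, whereas the paper compares the central-fiber sequences $0\to H^1(\Gamma(C_0),\Z)\to H^1(C_0,\Z)\to H^1(C_0^\nu,\Z)\to 0$ with their Mumford counterparts; your version is slightly leaner, since it needs only the commutative square on total spaces and the isomorphism on general fibers.
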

\begin{proof}
    Applying the Mumford construction, one can produce a projective semistable degeneration of abelian varieties $\cA^{\Sigma}$ extending the relative Jacobian $\cA \to \Delta$, for which the LICT$_\Z$ holds by Lemma \ref{lictmumford}.

    Next, choose any section of $\cC \to \Delta$, and define the relative Abel-Jacobi map using this section, away from the fiber at 0:
    \[ \cC \dasharrow \cA^{\Sigma}. \]
This rational map can be resolved by blowing up points in the central fiber, which introduces rational components to $C_0$. Crucially, the blow up operation has no effect on the homotopy type of the dual graph $\Gamma_0$, and also no effect on $\Pic^0(C_0^\nu)$. The pullback via the resolved Abel-Jacobi map $\alpha$ gives an isomorphism 
$$H^1(A_t,\Z) \to H^1(C_t,\Z),$$
which commutes with the monodromy $T-I$, as well as vertical isomorphisms
\[\xymatrix{
0 \ar[r]& H^1(N_\R/N',\Z) \ar[d]\ar[r]& H^1(A_0^{\Sigma},\Z) \ar[d]\ar[r]& H^1(B_0,\Z)\ar[d] \ar[r]& 0 \\
0 \ar[r]& H^1(\Gamma(C_0),\Z) \ar[r]& H^1(C_0,\Z) \ar[r]& H^1(C_0^\nu,\Z) \ar[r]& 0.
}\]
Therefore, the LICT$_\Z$ holds on $H^1$ for $\mathcal C \to \Delta$; it holds trivially on $H^0$, $H^2$.
\end{proof}


\subsection{Reduction to curves}
Lastly, the question of LICT$_\Z$ on $H^1$ for an arbitrary semistable family can be reduced to the case of curves. Let $\cX \to \Delta$ be a projective semistable family of relative dimension $n\geq 2$. Let $\cY \subset \cX \to \Delta$ be a general hyperplane section of the family, which is also semistable. By the Lefschetz hyperplane theorem, the restriction map $H^1(X_t,\Z) \to H^1(Y_t,\Z)$ is injective. Furthermore, it commutes with monodromy:
\[\xymatrix{
H^1(X_t,\Z)\ar[d] \ar[r]^{T} & H^1(X_t,\Z)\ar[d] \\
H^1(Y_t,\Z) \ar[r]^{T} & H^1(Y_t,\Z).
}\]
This is part of the Clemens--Schmid sequence by adding in a column on the left with the restriction map $H^1(X_0,\Z)\to H^1(Y_0,\Z)$ on the central fibers. We prove a version of the Lefschetz hyperplane theorem (see Theorem \ref{thm_reduction-hyp}) for LICT$_\Z$ in cohomological degrees $k=1$. By induction, this reduces the proof for $k=1$ to the case of curves.

\begin{theorem}[Lefschetz hyperplane theorem for LICT$_{\Z}$]\label{thm_reduction-hyp}
  Let $f:\cX \to \Delta$ be a projective semistable family of relative dimension $n>1$. Let $g:\cY \subset \cX \to \Delta$ be a general hyperplane section family. If the \textup{LICT}$_{\Z}$ holds for $g$ in degree $1$, then it also holds for $f$ in degree $1$.
\end{theorem}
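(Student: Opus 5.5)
The plan is a diagram chase using the long exact sequences of the pairs $(\cX,\cY)$ and $(X_t,Y_t)$, together with two Lefschetz-type vanishing statements: one on the fiber and one on the total space. Let $a\in H^1(X_t,\Z)^{\pi_1(\Delta^*)}$. Restriction commutes with monodromy, as in the square above, so $a|_{Y_t}\in H^1(Y_t,\Z)^{\pi_1(\Delta^*)}$. By the hypothesis that LICT$_\Z$ holds for $g$ in degree $1$, there is $b\in H^1(\cY,\Z)$ with $b|_{Y_t}=a|_{Y_t}$. The goal is to lift $b$ to a class $c\in H^1(\cX,\Z)$ and then check that $c|_{X_t}=a$.

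\emph{Lifting $b$.} Consider the commutative ladder
\[\xymatrix{
H^1(\cX,\Z)\ar[r]\ar[d] & H^1(\cY,\Z)\ar[r]^{\delta}\ar[d] & H^2(\cX,\cY;\Z)\ar[d]\\
H^1(X_t,\Z)\ar[r] & H^1(Y_t,\Z)\ar[r]^{\delta} & H^2(X_t,Y_t;\Z).
}\]
It suffices to show that $H^2(\cX,\cY;\Z)=0$; then $\delta b=0$ and $b$ extends to some $c\in H^1(\cX,\Z)$.

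To prove this vanishing, first shrink $\Delta$ to a slightly smaller closed disk $\bar\Delta$. Then $\cX_{\bar\Delta}$ is a compact oriented manifold with boundary $f^{-1}(\partial\bar\Delta)$, of real dimension $2n+2$. Lefschetz--Poincar\'e duality for the triad $(\cX;\cY,\partial)$ gives
\[H^k(\cX,\cY;\Z)\cong H_{2n+2-k}(\cX\smallsetminus\cY,\partial;\Z).\]
Here $\cX\smallsetminus\cY$ is the complement of a relatively ample divisor, so it is affine over the Stein disk and hence Stein of complex dimension $n+1$. By Andreotti--Frankel/Hamm (in its relative form for a Stein manifold with boundary over $\partial\bar\Delta$), the relevant homology vanishes in degrees $>n+1$. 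This gives $H^k(\cX,\cY;\Z)=0$ for $k<n+1$, and in particular $H^2(\cX,\cY;\Z)=0$ since $n\ge2$.

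\emph{Identifying the restriction of $c$.} We have $c|_{Y_t}=b|_{Y_t}=a|_{Y_t}$. Since $n\ge 2$, the classical Lefschetz hyperplane theorem says $H^1(X_t,\Z)\to H^1(Y_t,\Z)$ is injective; indeed $H^1(X_t,Y_t;\Z)=0$. Therefore $c|_{X_t}=a$, and $r$ is surjective in degree $1$.

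\emph{Main obstacle.} I expect the hard step to be the vanishing $H^2(\cX,\cY;\Z)=0$ on the total space. The difficulty is justifying the duality and Stein-vanishing argument for a family that is non-compact over the disk and has a singular, SNC central fiber. If the Stein argument near the boundary becomes delicate, I would try a fallback: a relative Morse-theoretic argument showing that $\cX_{\bar\Delta}$ is obtained from $\cY_{\bar\Delta}\cup f^{-1}(\partial\bar\Delta)$ by attaching cells of dimension $\ge n+1$. This would use a plurisubharmonic exhaustion of $\cX\smallsetminus\cY$ combined with $|t|^2$. Choosing $\cY$ general, so that $\cY\to\Delta$ is again semistable and transverse to all strata of $X_0$, ensures that the hypothesis applies to $g$ and that the exhaustion behaves well near $X_0$.
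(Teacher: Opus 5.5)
Your overall plan is sound: extend $b$ over $\cX$, then use injectivity of $H^1(X_t,\Z)\to H^1(Y_t,\Z)$. For $n\ge 3$ the vanishing you need does hold. But the key claim $H^2(\cX,\cY;\Z)=0$ is false when $n=2$, and $n=2$ is exactly the case needed for the last step of the induction down to curves.

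Since $\cX$ and $\cY$ retract onto $X_0$ and $Y_0$, one has $H^k(\cX,\cY;\Z)\cong H^k(X_0,Y_0;\Z)$. The correct Lefschetz statement (Lemma~\ref{lemma_LefHyp-coh}) gives vanishing only for $k<n$, with $H^n$ merely torsion-free. Concretely, take the trivial semistable family $\cX=\PP^2\times\Delta$, $\cY=Y\times\Delta$, with $Y$ a smooth plane cubic (a hyperplane section for the cubic embedding). The sequence $0=H^1(\PP^2,\Z)\to H^1(Y,\Z)\to H^2(\PP^2,Y;\Z)$ shows $H^2(\cX,\cY;\Z)\supseteq\Z^2$.

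The slip in your duality argument is this. Andreotti--Frankel bounds the \emph{absolute} homology of the $(n+1)$-dimensional Stein manifold $\cX\smallsetminus\cY$. Duality, however, produces homology \emph{relative} to $\partial$. That boundary fibers over a circle with $n$-dimensional affine fibers $X_t\smallsetminus Y_t$, so it has homology up to degree $n+1$. The long exact sequence of the pair then only yields $H_j(\cX\smallsetminus\cY,\partial;\Z)=0$ for $j>n+2$, i.e.\ $k<n$. The product $(X\smallsetminus Y)\times(\bar\Delta,\partial\bar\Delta)$ shows this range is sharp.

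Your fallback has the same defect. Attaching cells of dimension $\ge n+1$ to $\cY_{\bar\Delta}\cup f^{-1}(\partial\bar\Delta)$ gives $H^2(\cX_{\bar\Delta},\cY_{\bar\Delta}\cup f^{-1}(\partial\bar\Delta);\Z)=0$. That says nothing about $H^2(\cX,\cY;\Z)$.

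What is missing for $n=2$ is an argument that $\delta b$ is torsion; torsion-freeness of $H^2(X_0,Y_0;\Z)$ (Lemma~\ref{lemma_LefHyp-coh}(3)) then finishes. The paper gets this from LICT$_{\bbQ}$:
\begin{itemize}
\item Choose $\alpha_0\in H^1(\cX,\Z)$ and $m\neq 0$ with $\alpha_0|_{X_t}=ma$.
\item $H^1(\cY,\Z)\cong H^1(Y_0,\Z)$ is torsion-free and injects rationally into $H^1(Y_t,\bbQ)$ by Clemens--Schmid, so it injects integrally into $H^1(Y_t,\Z)$.
\item Hence $\alpha_0|_{\cY}=mb$, so $m\,\delta b=0$, and therefore $\delta b=0$.
\end{itemize}

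Alternatively, your fallback can be made to work without LICT$_{\bbQ}$, but only if you first extend $b$ over the boundary:
\begin{itemize}
\item Lift $a$ by the integral Wang sequence to $\tilde a\in H^1(f^{-1}(\partial\bar\Delta),\Z)$.
\item The classes $\tilde a$ and $b$ agree on $Y_t$, so on $g^{-1}(\partial\bar\Delta)$ they differ by a pullback from $H^1(\partial\bar\Delta,\Z)$. Subtract that pullback from $\tilde a$; this does not change its restriction to $X_t$.
\item Glue $b$ and $\tilde a$ by Mayer--Vietoris to a class on $\cY_{\bar\Delta}\cup f^{-1}(\partial\bar\Delta)$.
\item Extend it to $\cX_{\bar\Delta}$ using $H^2(\cX_{\bar\Delta},\cY_{\bar\Delta}\cup f^{-1}(\partial\bar\Delta);\Z)\cong H_{2n}(\cX\smallsetminus\cY;\Z)=0$ for $n\ge 2$.
\end{itemize}
Neither of these steps is in your proposal as written.
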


\begin{proof}
We have the following commutative diagram:
 \begin{figure}[ht]
    \centering
\begin{equation*}
\begin{tikzcd}
0\arrow[r]&H^1(X_0,\Z)\arrow[r,"r"]\arrow[d,hookrightarrow,"\iota^*"]& H^1(X_t,\Z)\arrow[r,"T-I"]\arrow[d,hookrightarrow,"\iota^*"]&H^1(X_t,\Z)\arrow[d,hookrightarrow,"\iota^*"]\\
0\arrow[r]&H^1(Y_0,\Z)\arrow[d,"\delta"]\arrow[r,"r"]& H^1(Y_t,\Z)\arrow[r,"T-I"]&H^1(Y_t,\Z)\\
& H^{2}(X_0,Y_0,\Z).
\end{tikzcd}
\end{equation*}
\end{figure}

The maps labeled $r$ are both injective by the Clemens-Schmid sequence.
 The  three vertical maps labeled $\iota^*$ all injective by Lemma \ref{lemma_LefHyp-coh} and the usual Lefschetz hyperplane theorem.    Observe that any monodromy invariant class $\alpha\in H^1(X_t,\Z)^{\pi_1(\Delta^*)}$ restricts to an invariant class $\beta=\iota^*\alpha$ in $H^1(Y_t,\Z)^{\pi_1(\Delta^*)}$. Since we are assuming that LICT$_\Z$ holds for $g$,  $\beta$ lifts to $\beta_0\in H^1(Y_0,\Z)$. We claim $\beta_0$ lies in the image of $\iota^*: H^1(X_0, \mathbb{Z}) \to H^1(Y_0, \mathbb{Z})$.

By LICT$_\Q$, there exists a class $\alpha_0\in H^1(X_0,\Z)$ and an integer $m$, such that $r(\alpha_0)=m\alpha$. By commutativity of the diagram,
    \begin{equation}\label{eqn_res-to-hyp}
    r\circ\iota^*(\alpha_0)=m\beta.
    \end{equation} 
Therefore, since $r$ is injective, 
    $$\iota^*(\alpha_0)=m\beta_0$$
This implies that the image $m\delta(\beta_0)=0$. However, Lemma \ref{lemma_LefHyp-coh} implies that $H^2(X_0, Y_0;\Z)$ is torsion free. Therefore, $\delta(\beta_0)=0$, and so it lies in the image of $\iota^*$ as claimed. Writing $\beta_0 = \iota^*\alpha_0'$, we
can see that $r(\alpha_0') =\alpha$ because $r(\beta_0)=\beta$.

\end{proof}

\noindent\textit{Proof of Theorem \ref{ssh1}.}
This now follows from Theorems \ref{thm_LICTZ-curves} and \ref{thm_reduction-hyp}.\qed

\section{Invariant cycle theorem for $H^2$}\label{sec_postiveresults}
In this section, we discuss some positive results on Question \ref{question_ICTZ} for $H^2$.

\subsection{Slicing the Clemens--Schmid sequence}
The Clemens--Schmid sequence is a concatenation of the Wang sequence and the long exact sequence of the pair $(\mathcal{X},\mathcal{X^*})$ \cite[Corollary 11.44]{PS08}, both of which are exact over $\Z$:

   \begin{figure}[ht]
    \centering
\begin{equation}\label{diagram_ClemensSchmid}
\begin{tikzcd}
H^k(\mathcal{X}) \arrow[rr] \arrow[dr,"r"]&& H^k(X_t)\arrow[r,"T-I"] & H^k(X_t)\\
&H^k(\mathcal{X}^*)\arrow[ur,"s"]\arrow[dr,"\delta"]\\
H^{k-1}(X_t)\arrow[ur]&& H^{k+1}(\mathcal{X},\mathcal{X}^*)\arrow[r]& H^{k+1}(\mathcal{X}).
\end{tikzcd}
\end{equation}
\end{figure}


Note that we have used $T-I$ instead of $N=\log(T)$ because $N$ is defined over $\bbQ$, but not over $\Z$ in general. Nevertheless, the first horizontal sequence is still exact over $\bbQ$, due to the fact that $\ker(N)=\ker(T-I)$; see the proof of \cite[Theorem 11.43]{PS08}. 
It makes sense to ask whether the Clemens--Schmid sequence remains exact over $\Z$. Exactness at the term $H^n(X_t)$ is equivalent to Question \ref{question_ICTZ}.


\begin{proposition}\label{prop_torsionX0}
    Let $f:\mathcal{X}\to \Delta$ be a semistable families of complex projective varieties of relative dimension $n$, with $\mathcal{X}$ smooth. If (i) $H^{k-1}(X_t,\bbQ)=0$ and (ii) $H_{2n-k+1}(X_0,\Z)$ is torsion-free, then \textup{LICT}$_\Z$ holds for $f$ in degree $k$. In particular, the Clemens--Schmid sequence (the top row of \eqref{diagram_ClemensSchmid}) is exact over $\Z$.
\end{proposition}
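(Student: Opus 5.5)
We chase the slicing diagram \eqref{diagram_ClemensSchmid} with $\Z$ coefficients. Let $\alpha\in H^k(X_t,\Z)$ with $(T-I)\alpha=0$. By exactness of the Wang sequence
$$H^k(\mathcal X^*,\Z)\xrightarrow{s} H^k(X_t,\Z)\xrightarrow{T-I}H^k(X_t,\Z),$$
which holds over $\Z$ because $\mathcal X^*\to\Delta^*$ is a fibration over a circle, there is a class $\beta\in H^k(\mathcal X^*,\Z)$ with $s(\beta)=\alpha$. We want to modify $\beta$ so that it comes from $H^k(\mathcal X,\Z)$. Since $\mathcal X$ deformation retracts onto $X_0$ (Clemens) and $H^k(\mathcal X)\to H^k(\mathcal X^*)\to H^k(X_t)$ is the restriction, this would give the lift. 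By exactness of the pair sequence, $\beta$ comes from $H^k(\mathcal X,\Z)$ if and only if $\delta(\beta)=0$ in $H^{k+1}(\mathcal X,\mathcal X^*;\Z)$.

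\textbf{Step 1: $\delta(\beta)$ is torsion.} Over $\bbQ$, LICT$_\bbQ$ (Theorem \ref{ICTQ}) gives $\gamma\in H^k(\mathcal X,\bbQ)$ whose restriction to $X_t$ is $\alpha$. Hypothesis (i) says $H^{k-1}(X_t,\bbQ)=0$, so by the Wang sequence $s$ is injective rationally. Hence $\gamma|_{\mathcal X^*}=\beta$ in $H^k(\mathcal X^*,\bbQ)$, and therefore $\delta(\beta)\otimes\bbQ=0$. So $m\,\delta(\beta)=0$ for some integer $m\ge 1$. (Integrally $\beta$ is only determined up to the image of $H^{k-1}(X_t,\Z)$, but any choice works for the argument.)

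\textbf{Step 2: $H^{k+1}(\mathcal X,\mathcal X^*;\Z)$ is torsion-free.} This is where hypothesis (ii) enters, and I expect it to be the main (if routine) obstacle. By excision and Lefschetz/Poincaré duality on the smooth $(2n+2)$-real-dimensional manifold with boundary $\mathcal X$ (after shrinking to a closed tubular neighbourhood of $X_0$),
$$H^{k+1}(\mathcal X,\mathcal X^*;\Z)\cong H^{k+1}_c(\mathcal X,\Z)\cong H_{2n+2-k-1}(\mathcal X,\Z)\cong H_{2n-k+1}(X_0,\Z).$$
Equivalently, this is $H^{k+1}_{X_0}(\mathcal X)\cong H^{BM}_{2n+1-k}(X_0)=H_{2n+1-k}(X_0)$, since $X_0$ is compact and $\mathcal X$ retracts onto $X_0$. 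One must check that the identification is correct with integral coefficients and the right orientation conventions, but it is standard: the local cohomology $H^*_{X_0}(\mathcal X)$ is dual to the homology of $X_0$ for a smooth oriented ambient space. Hypothesis (ii) then says this group is torsion-free.

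\textbf{Conclusion.} Combining the two steps gives $\delta(\beta)=0$, so $\beta$ lifts to $H^k(\mathcal X,\Z)$ and its restriction to $X_t$ is $s(\beta)=\alpha$. Thus LICT$_\Z$ holds in degree $k$. This is precisely exactness of the top row of \eqref{diagram_ClemensSchmid} at $H^k(X_t)$; the inclusion of image in kernel is automatic because restrictions from $\mathcal X$ are monodromy invariant.
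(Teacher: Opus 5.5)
Your proposal is correct and follows essentially the same route as the paper. Both lift $\alpha$ through the Wang sequence, use LICT$_\bbQ$ together with hypothesis (i) to see that $\delta$ of the lift is torsion, and then kill it using the torsion-freeness of $H^{k+1}(\mathcal X,\mathcal X^*;\Z)\cong H_{2n-k+1}(X_0,\Z)$, obtained by excision, Lefschetz duality and the retraction of $\mathcal X$ onto $X_0$. The only cosmetic difference is that you run the torsion argument after tensoring with $\bbQ$, whereas the paper works with an integral class $\beta\in H^k(\mathcal X,\Z)$ restricting to a multiple $m\alpha$.
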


\begin{proof}
The assumption (i) implies $H^{k-1}(X_t,\Z)$ is torsion; The assumption (ii) is equivalent to the torsion-freeness of $H^{k+1}(\mathcal X,\mathcal X^*,\Z)$: This follows from isomorphisms 
    $$H^{k+1}(\mathcal{X},\mathcal{X}^*,\Z)\cong H^{k+1}(\mathcal{X},\partial \mathcal{X},\Z)\cong H_{2n-k+1}(\mathcal{X},\Z)\cong H_{2n-k+1}(X_0,\Z),$$
which arises from excision, Lefschetz duality, and the fact that there is a deformation retract of $\mathcal{X}$ onto $\mathcal{X}_0$ (cf. \cite{Cle77}, \cite[Proposition C.11]{PS08}).

    
  Given any monodromy invariant class $\alpha\in \ker(T-I)$, it lifts to $\tilde{\alpha}\in H^k(\mathcal X^*,\Z)$. By the exactness of the Clemens--Schmid sequence over $\bbQ$ (LICT$_{\bbQ}$), 
there exist $\beta\in H^k(\mathcal{X},\Z)$ and an integer $m\in \Z$ such that $s\circ r(\beta)=m\alpha$. In particular, the difference $r(\beta)-m\tilde{\alpha}\in H^k(\mathcal X^*,\Z)$ is torsion, by the assumption (i) that the previous term in the exact sequence $H^{k-1}(X_t,\Z)$ is torsion. Then applying $\delta$ and using the assumption (ii), we obtain that $\delta(m\tilde{\alpha})=0$, so $\delta(\tilde{\alpha})=0$. Hence $\tilde{\alpha}$ lifts to $H^k(\mathcal X,\Z)$. The claim follows.
\end{proof}

The above generalizes the argument in \cite[Lemma 3.5]{Fri84} and \cite[Theorem 4.1]{FriGri24} to show the exactness of the Clemens--Schmid sequence over $\Z$ for a semistable family of K3 surfaces and I-surfaces. In particular, we prove the following.

\begin{theorem}\label{thm_H^2}
     Let $f:\mathcal{X}\to \Delta$ be a semistable degeneration of complex projective varieties with $q(X_t)=0$. Then the \textup{LICT}$_{\Z}$ holds for $f$ in cohomological degree 2.
\end{theorem}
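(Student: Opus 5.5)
The plan is to apply the argument of Proposition \ref{prop_torsionX0} with $k=2$. Hypothesis (i) of that proposition holds, but I cannot verify hypothesis (ii) directly, so I would replace it by an argument adapted to degree $2$. Throughout I use the diagram \eqref{diagram_ClemensSchmid} with $\Z$ coefficients.

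\emph{Step 1.} Since $q(X_t)=0$, we have $H^1(X_t,\bbQ)=0$. Moreover $H^1$ of any space is torsion-free by the universal coefficient theorem, so $H^1(X_t,\Z)=0$ exactly, not merely rationally. The Wang sequence
$$H^1(X_t)\to H^2(\mathcal X^*)\xrightarrow{s} H^2(X_t)\xrightarrow{T-I}H^2(X_t)$$
is exact over $\Z$. It therefore shows that $s$ is injective and maps $H^2(\mathcal X^*,\Z)$ isomorphically onto $\ker(T-I)$. Consequently, LICT$_\Z$ in degree $2$ is equivalent to surjectivity of $H^2(\mathcal X,\Z)\to H^2(\mathcal X^*,\Z)$, that is, to the vanishing of $\delta\colon H^2(\mathcal X^*,\Z)\to H^3(\mathcal X,\mathcal X^*,\Z)$.

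\emph{Step 2.} Take $\alpha\in\ker(T-I)$ and let $\tilde\alpha\in H^2(\mathcal X^*,\Z)$ be its unique lift. By LICT$_\bbQ$ there are $\beta\in H^2(\mathcal X,\Z)$ and $m\neq 0$ with $s\circ r(\beta)=m\alpha$. Injectivity of $s$ then gives $r(\beta)=m\tilde\alpha$ on the nose, so $m\,\delta(\tilde\alpha)=0$. Hence $\delta(\tilde\alpha)$ is a torsion element of $\operatorname{im}\delta$. It therefore suffices to prove that $\operatorname{im}(\delta)$ is torsion-free. This is weaker than the hypothesis (ii) of Proposition \ref{prop_torsionX0} that all of $H^3(\mathcal X,\mathcal X^*,\Z)$ be torsion-free, which need not hold.

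\emph{Step 3 (main obstacle).} By exactness of the pair sequence, $\operatorname{im}\delta=\ker\bigl(H^3(\mathcal X,\mathcal X^*)\to H^3(\mathcal X)\bigr)$. I would rewrite this map homologically, as in the proof of Proposition \ref{prop_torsionX0}, using excision, Lefschetz duality and the retraction $\mathcal X\simeq X_0$. Under these identifications it becomes the map $H_{2n-1}(\mathcal X,\Z)\to H_{2n-1}(\mathcal X,\partial\mathcal X,\Z)$. Its kernel is the image of $H_{2n-1}(\partial\mathcal X,\Z)$, where $\partial\mathcal X\simeq\mathcal X^*$ is the mapping torus of $T$, a closed orientable $(2n+1)$-manifold.

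To control torsion in this image, I would apply Poincaré duality on the mapping torus, which gives $H_{2n-1}(\partial\mathcal X)\cong H^2(\partial\mathcal X)\cong\ker(T-I)$ by Step 1; this group is torsion-free as a subgroup of $H^2(X_t,\Z)$. I would then argue that a torsion class in the image pulls back to a class whose pairing, against the dual Wang sequence in degree $1$, is governed by $H^1(X_t,\Z)=0$. The hope is that this rules out torsion in the image. If that route fails, the fallback is to show directly that the torsion of $H_{2n-1}(X_0,\Z)$ meets $\operatorname{im}\delta$ trivially. For this I would use the Mayer--Vietoris spectral sequence for the SNC fiber $X_0$ together with the degree-one integral statement, Theorem \ref{ssh1}, applied to a hyperplane-section family as in Theorem \ref{thm_reduction-hyp}.

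This third step is where the hypothesis $q=0$ must be used a second time, beyond Step 1. I expect it to be the crux of the proof.
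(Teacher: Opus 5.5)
Your Steps 1 and 2 are correct, and your reduction to showing that $\operatorname{im}\delta$ has no torsion is exactly the mechanism of Proposition \ref{prop_torsionX0}. Step 1 is even slightly sharper than needed, since $H^1(X_t,\Z)=0$ makes $s$ injective. The gap is Step 3, which does not complete the proof. As written it offers a hope and a fallback rather than an argument. The Poincar\'e duality route also does not work on its own: the image of the torsion-free group $H_{2n-1}(\partial\mathcal X,\Z)$ in $H_{2n-1}(\mathcal X,\Z)$ can a priori contain torsion, so knowing $H^2(\partial\mathcal X)\cong\ker(T-I)$ is free does not help.

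The missing idea is that hypothesis (ii) of Proposition \ref{prop_torsionX0}, which you say ``need not hold'', in fact always holds for $k=2$. This is true for every semistable family and needs no second use of $q=0$. As in that proposition, $H^3(\mathcal X,\mathcal X^*;\Z)\cong H_{2n-1}(X_0;\Z)$. By the universal coefficient theorem, the torsion of $H_{2n-1}(X_0;\Z)$ is isomorphic to the torsion of $H^{2n}(X_0;\Z)$, the top-degree cohomology of an SNC variety of pure dimension $n$. Consider the Mayer--Vietoris spectral sequence $E_1^{p,q}=\bigoplus_{i_0<\cdots<i_p}H^q(X_{i_0}\cap\cdots\cap X_{i_p};\Z)$. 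Here $E_1^{p,q}=0$ unless $q\le 2(n-p)$. So the only term of total degree $2n$ is $E_1^{0,2n}=\bigoplus_i H^{2n}(X_i;\Z)$, which is free with one generator per component. No differential enters or leaves this term, so $H^{2n}(X_0;\Z)$ is free (Lemma \ref{lemma_degMV}). Hence $H^3(\mathcal X,\mathcal X^*;\Z)$ is torsion-free, and therefore so is $\operatorname{im}\delta$. Your Step 2 then finishes the proof immediately, and this is precisely the paper's argument.
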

\begin{proof}
  The assumption $q(X_t)=0$ implies that $H^1(X_t,\bbQ)=0$. We apply $n=2$ to Proposition \ref{prop_torsionX0}. It remains to show that $H_{2n-1}(X_0,\Z)$ is torsion-free, where $\dim(X_0)=\dim(X_t)=n$. By the universal coefficient theorem, $H_{2n-1}(X_0,\Z)_{tor}\cong H^{2n}(X_0,\Z)_{tor}$, which is zero by Lemma \ref{lemma_degMV}.
\end{proof}

\subsection{Global invariant cycle theorem over 1-dimensional base} Theorems \ref{ssh1} and \ref{thm_H^2} also imply the following
\begin{corollary}\label{cor_GICT_H^1}
  Let $f:\mathcal X\to C$ be a projective semistable family over a smooth projective curve. Then the \textup{GICT}$_{\Z}$ holds for $f$ in degree $1$, and moreover holds in degree $2$ if $q(X_t)=0$. 
\end{corollary}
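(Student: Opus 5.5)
The plan is to reduce the global statement to the local ones, Theorems \ref{ssh1} and \ref{thm_H^2}, applied at each singular fiber. We also use the fact that the Leray spectral sequence of the smooth part degenerates integrally because the open base is homotopy equivalent to a graph. Let $\Sigma=\{c_1,\dots,c_m\}\subset C$ be the set of points with singular fibers, $U=C\smallsetminus\Sigma$, $\mathcal X_U=f^{-1}(U)$, and let $k\in\{1,2\}$, with $q(X_t)=0$ assumed when $k=2$. Take $\alpha\in H^k(X_t,\Z)^{\pi_1(U,t)}$.

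\emph{Step 1: lift to $\mathcal X_U$.} Since $U$ is a noncompact Riemann surface, it has the homotopy type of a one-dimensional CW complex. Hence $H^p(U,R^q f_*\Z)=0$ for $p\ge 2$, and the Leray spectral sequence of the smooth proper map $\mathcal X_U\to U$ degenerates at $E_2$ for degree reasons. Consequently $H^k(\mathcal X_U,\Z)\to H^0(U,R^kf_*\Z)=H^k(X_t,\Z)^{\pi_1(U,t)}$ is surjective, and we obtain a lift $\tilde\alpha\in H^k(\mathcal X_U,\Z)$.

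\emph{Step 2: correct locally and glue by Mayer--Vietoris.} Choose small disjoint disks $\Delta_i$ around the $c_i$, and set $\mathcal X_i=f^{-1}(\Delta_i)$, $\mathcal X_i^*=f^{-1}(\Delta_i^*)$. Then $\mathcal X=\mathcal X_U\cup\bigsqcup_i\mathcal X_i$ with $\mathcal X_U\cap\mathcal X_i=\mathcal X_i^*$. The restriction of $\alpha$ to a nearby fiber $X_{t_i}$, transported along a path, is invariant under the local monodromy, since that monodromy lies in $\pi_1(U)$. By Theorem \ref{ssh1} for $k=1$, or Theorem \ref{thm_H^2} for $k=2$, there is a class $\beta_i\in H^k(\mathcal X_i,\Z)$ restricting to $\alpha$ on $X_{t_i}$.

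The two classes $\tilde\alpha|_{\mathcal X_i^*}$ and $\beta_i|_{\mathcal X_i^*}$ need not agree. By the Wang sequence, their difference lies in the image of $H^{k-1}(X_{t_i},\Z)\to H^k(\mathcal X_i^*,\Z)$, that is, the kernel of the restriction $s$ to the fiber. To fix this, modify $\tilde\alpha$ instead. Its restriction to a fiber is fixed, but we may change it by elements of $H^1(U,R^{k-1}f_*\Z)$. Equivalently, the discrepancy $\gamma_i\in\operatorname{coker}(T_i-I)$ on $H^{k-1}(X_{t_i})$ comes from the local $E_2^{1,k-1}$ term of the punctured disk.

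The key observation is that the map
\[H^1(U,R^{k-1}f_*\Z)\to\bigoplus_i H^1(\Delta_i^*,R^{k-1}f_*\Z)\]
has cokernel controlled by $H^2_c(U,R^{k-1}f_*\Z)$, through the long exact sequence relating $U$, the compactly supported cohomology, and the boundary circles. Alternatively, the discrepancy can be absorbed by changing each $\beta_i$ by the image of $H^k(\mathcal X_i,\Z)$ that dies on the fiber. Once the restrictions agree on every $\mathcal X_i^*$, Mayer--Vietoris exactness
\[H^k(\mathcal X)\to H^k(\mathcal X_U)\oplus\textstyle\bigoplus_i H^k(\mathcal X_i)\to\bigoplus_i H^k(\mathcal X_i^*)\]
gives a global class restricting to $\alpha$.

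\emph{Main obstacle.} The delicate point is the gluing in Step 2. For $k=2$ with $q=0$, $H^1(X_t,\Z)=0$, so $H^{k-1}$ is zero up to torsion. I would first try to show that $R^1f_*\Z=0$ on $U$ (as $H^1(X_t,\Z)$ is torsion-free and of rank $0$), so the Wang kernel vanishes and no discrepancy arises. For $k=1$, the Wang kernel is the image of $H^0(X_{t_i},\Z)=\Z$, generated by the pullback of the generator of $H^1(\Delta_i^*)$. The discrepancy is then an integer $n_i$ at each puncture. I would cancel it by adding to $\tilde\alpha$ the pullback of a class in $H^1(U,\Z)$ with prescribed residues $n_i$. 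This requires $\sum n_i=0$, or else changing $\beta_i$ instead, which is possible because a reduced fiber allows $f^*$ of a class on $\Delta_i$ that is locally exact. The argument is cleaner if instead one uses $\mathcal X\to C$ directly: $\mathcal X_U\cup$ (neighborhoods) with the discrepancy pulled back from $H^1(C\smallsetminus\Sigma)\to\bigoplus H^1(\Delta_i^*)\to H^2(C)$. The sum condition is then exactly the obstruction, and I would check that it vanishes by pairing with a section or multisection of reduced fibers.
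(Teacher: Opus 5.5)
Your outline follows the paper's proof: a Leray lift on $\mathcal X_U$, local lifts from Theorems~\ref{ssh1} and~\ref{thm_H^2}, comparison on the annuli via the Wang sequence, then Mayer--Vietoris. For $k=2$ your argument is complete. $H^1(X_t,\Z)$ is torsion-free of rank $2q=0$, so the Wang kernel $\operatorname{coker}(T-I\mid H^1(X_t,\Z))$ vanishes, and $\tilde\alpha$ and $\beta_i$ already agree on each $\mathcal X_i^*$.

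\textbf{The gap is at $k=1$.} There the decisive step is only announced, and your fallback of changing $\beta_i$ is wrong. Take a local holomorphic section $\sigma$ through a smooth point of the reduced fiber $X_{c_i}$. The loop $\sigma(\partial\Delta_\epsilon)$ has $f^*d\theta_i$-period $1$, yet it bounds a disc in $\mathcal X_i$. Hence any class on $\mathcal X_i$ vanishing on $X_{t_i}$ restricts to zero on $\mathcal X_i^*$, and modifying $\beta_i$ never changes the discrepancy. The paper's written proof makes exactly this move (``adjust $\alpha_p$ by $f^*(\eta_p)$''), so your unease was justified.

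The only real freedom is $\tilde\alpha\mapsto\tilde\alpha+f^*\eta$ with $\eta\in H^1(U,\Z)$, and the residue vectors of such $\eta$ are precisely those with zero sum. Write $\tilde\alpha|_{\mathcal X_i^*}-\beta_i|_{\mathcal X_i^*}=n_i\,f^*d\theta_i$. Then a global lift exists if and only if $\sum_i n_i=0$, and this is what you still have to prove.

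Your multisection idea does prove it, and it needs no section and no reducedness:
\begin{itemize}
\item Let $\nu:\tilde\Gamma\to\mathcal X$ be the normalization of a curve finite and surjective of degree $d\ge 1$ over $C$ (for example a general complete intersection of $\dim X_t$ very ample divisors), and set $\phi=f\circ\nu$.
\item For $q\in\phi^{-1}(c_i)$, take a small disc $D_q\ni q$ with $\phi(D_q)\subset\Delta_i$, and put $\gamma_q=\partial D_q$.
\item Since $\nu(D_q)\subset\mathcal X_i$, we have $\beta_i(\nu_*\gamma_q)=0$. Also $f^*d\theta_i(\nu_*\gamma_q)=e_q$, the local degree of $\phi$ at $q$. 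Hence $\tilde\alpha(\nu_*\gamma_q)=n_ie_q$.
\item The loops $\gamma_q$ together bound $\tilde\Gamma\smallsetminus\bigcup_q D_q$, which contains no point over $\Sigma$ and so maps into $\mathcal X_U$. Therefore
\[
0=\sum_q\tilde\alpha(\nu_*\gamma_q)=\sum_i n_i\sum_{q\in\phi^{-1}(c_i)}e_q=d\sum_i n_i .
\]
\end{itemize}
Equivalently: $\delta\tilde\alpha=f^*(\sum_i n_i\tau_i)$ in $H^2(\mathcal X,\mathcal X_U;\Z)$, where $\tau_i\in H^2(C,U;\Z)$ is the Thom class at $c_i$. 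Exactness then forces $(\sum_i n_i)[F]=0$ in $H^2(\mathcal X,\Z)$, while the fiber class $[F]$ is non-torsion.

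Add this argument, and when $\Sigma=\emptyset$ adjoin to $\Sigma$ a point with smooth fiber so that Step~1 applies. With those two additions your proof is complete, and at the gluing step it is more careful than the paper's.
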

\begin{proof}
    Let $\mathcal X_U\to U$ be the maximal smooth subfamily. Then a monodromy invariant class $\alpha\in H^1(X_t,\Z)^{\pi_1(U,t)}$ lifts to $\alpha_U\in H^1(\mathcal X_U,\Z)$. For each of the punctures $p\in C\setminus U$, an analytic neighborhood $\Delta_p$ supports the semistable family $\mathcal X_{\Delta_p}\to \Delta_p$. We can assume $t\in \Delta_p$, and by Theorem \ref{ssh1}, the invariant class lifts to $\alpha_p\in H^1(\mathcal X_{\Delta_p},\Z)$. Their restriction to $H^1(X_t,\Z)$ coincides, so their restriction to annulus family $\mathcal X_{\Delta_p^*}$ differs by a class $\alpha_U-\alpha_p\in \ker(H^1(\mathcal X_{\Delta_p^*},\Z)\to H^1(X_t,\Z))$. By the Wang sequence \eqref{diagram_ClemensSchmid}, this class comes from $f^*(\eta_p)$, where $\eta_p\in H^1(\Delta_p^*,\Z)$. Therefore, if we adjust $\alpha_p$ by $f^*(\eta_p)$, the two classes agree over the annulus, and by Mayer--Vietoris sequence, lift to a class on $H^1(\mathcal X_U\cup \mathcal X|_{\Delta_p},\Z)$. Repeating this process for each of the points in $C\setminus U$, we get a class $\alpha_{\mathcal X}$ in $H^1(\mathcal X,\Z)$ whose restriction to $X_t$ is $\alpha$. The claim follows. The same argument applies for $H^2$ using Theorem \ref{thm_H^2}.
\end{proof}

\section{Quadratic base change and variations of Hodge structure}\label{sec_constantHodge}

In this section, we investigate certain special base changes of elliptic surfaces, in which the degree of the fundamental line bundle is smaller than expected, and study their Hodge structures. Our main application is to construct elliptic-elliptic surfaces as rational double covers of elliptic K3 surfaces. 
\subsection{Euler defect of base change}

Let $X^*\to \Delta^*$ be a family of smooth elliptic curves parameterized by a punctured disk $\Delta^*$, and $X\to \Delta$ be the Kodaira--N\'eron model with singular fiber $F$ over $0$. Let $\tilde{\Delta}\to \Delta$ be the $2$-to-1 cover totally branched at $0$, then the base change $X\times_{\Delta}\tilde{\Delta}$ is birational to a new family of elliptic curves $\tilde{X}\to \tilde{\Delta}$, with Kodaira fiber type $F'$. Using Kodaira's notation, the base change fiber type table is listed below \cite[p.105]{SchShi19} for star fibers:

\begin{table}[!h]
\begin{center}
\begin{tabular}{|| c |c  ||}
\hline
$F$ &  $F'$ \\
 \hline\hline
$II^*$ & $IV^*$\\  
 \hline
$III^*$ & $I_0^*$\\
 \hline
$IV^*$ & $IV$\\
 \hline
$I_n^*$ & $I_{2n}$\\
 \hline
\end{tabular}
 \caption{Double cover and reduction of Kodaira's star fibers}
 \label{table_star}
\end{center}
\end{table}

\begin{definition}\normalfont
    For each Kodaira singular fiber of type $F$, we define 
    \begin{equation}\label{eqn_delta(F)}
        \delta(F):=\frac{1}{12}(2\cdot e(F)-e(F')),
    \end{equation}
with respect to a quadratic base change, where $e(\cdot)$ is the Euler characteristic.

\end{definition}
\begin{proposition}\label{prop_delta=0,1}
    For every Kodaira fiber of type $F$, $\delta(F)\in \{0,1\}$. We have $\delta(F)=1$ if and only if $F$ is a star fiber.
\end{proposition}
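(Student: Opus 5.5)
The plan is a direct case-by-case computation over the Kodaira–Néron–Tate classification. I will use the standard Euler characteristics
\[e(I_n)=n,\quad e(II)=2,\quad e(III)=3,\quad e(IV)=4,\quad e(I_n^*)=n+6,\quad e(IV^*)=8,\quad e(III^*)=9,\quad e(II^*)=10,\]
and the quadratic base change table from \cite[p.105]{SchShi19}. That table extends Table \ref{table_star} to the non-star fibers: $I_n\mapsto I_{2n}$, $II\mapsto IV$, $III\mapsto I_0^*$, $IV\mapsto IV^*$. Conceptually this follows from local monodromy: the monodromy of $F'$ is the square of that of $F$, and the fiber type is determined by it up to the $^*$-twist, which is governed by the valuation of the discriminant. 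Concretely, the valuation $v(\Delta_{\min})$ doubles under base change and is then reduced by $12$ when the doubled Weierstrass model is non-minimal.

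The cleanest organizing principle is $e(F)=v(\Delta_{\min})$ (Ogg's formula in characteristic $0$). Under a totally ramified double cover, the pulled-back Weierstrass equation has discriminant valuation $2e(F)$. It is minimal unless $v(a_4)\ge 4$ and $v(a_6)\ge 6$ after pullback, in which case one twist reduces the valuation by exactly $12$. Pulling back doubles valuations, so non-minimality after base change is equivalent to $v(a_4)\ge 2$, $v(a_6)\ge 3$ on the original minimal model. That condition is precisely the condition that $F$ is a star fiber (types $I_n^*$, $IV^*$, $III^*$, $II^*$). Hence
\[e(F')=2e(F)-12\delta,\qquad \delta=\begin{cases}1 & F \text{ a star fiber},\\ 0 & \text{otherwise}.\end{cases}\]

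Also, a twist can be done at most once. After reduction, the new model has $v(a_4)<4$ or $v(a_6)<6$, because the original model was minimal, so $v(a_4)<4$ or $v(a_6)<6$ there, hence the doubled valuations are $<8$ or $<12$, and these become $<4$ or $<6$ after the twist. This rules out $\delta\ge 2$.

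As a sanity check I will verify the table numerically:
\begin{itemize}
\item non-star fibers: $I_n$ gives $2n-2n=0$; $II$ gives $4-4=0$; $III$ gives $6-6=0$; $IV$ gives $8-8=0$;
\item star fibers: $I_n^*$ gives $2(n+6)-2n=12$; $IV^*$ gives $16-4=12$; $III^*$ gives $18-6=12$; $II^*$ gives $20-8=12$.
\end{itemize}
So $\delta$ is $0$ and $1$ respectively.

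The only real obstacle is justifying the base change table itself, in particular the characterization of when the pullback Weierstrass model is non-minimal. I would simply cite Tate's algorithm and the table in \cite{SchShi19}, with the valuation argument above as the underlying reason. With that in hand, the proof reduces to the finite computation.
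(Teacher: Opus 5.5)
Your proposal is correct and follows the paper's proof, which is exactly this case-by-case check: the quadratic base change table of \cite{SchShi19} combined with the listed Euler characteristics. You also usefully spell out the non-star entries $I_n\mapsto I_{2n}$, $II\mapsto IV$, $III\mapsto I_0^*$, $IV\mapsto IV^*$, which the paper's Table~\ref{table_star} omits but its proof tacitly uses; and your valuation argument via $e(F)=v(\Delta_{\min})$, the star criterion $v(a_4)\ge 2$, $v(a_6)\ge 3$, and the single twist is on its own a complete, table-free proof, a bit more than the paper gives.
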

\begin{proof}
   This is a direct consequence of Table \ref{table_star} and the fact that $e(II)=2$, $e(III)=3$, $e(IV)=4$, $e(II^*)=10$, $e(III^*)=9$, $e(IV^*)=8$, $e(I_n^*)=6+n$, $e(I_n)=n$.
\end{proof}
 Let $f:X\to C$ be an elliptic surface and $\pi:C'\to C$ a branched double cover. Let $f':X'\to C'$ be the relatively minimal regular model of the fibered product $X\times_CC'$. 
 Here, we investigate the relationship between the degree of fundamental line bundles $d=\deg(\mathbb L_X)$ and $d'=\deg(\mathbb L_{X'})$.

\begin{lemma}\label{lemma_Euler-defect}
  There is an inequality 
$$d'\le 2d.$$
Moreover, the difference 
\begin{equation}\label{eqn_delta-global}
    \delta:=2d-d'
\end{equation}
equals the number of points $p\in C$ such that
    \begin{itemize}
        \item the fiber $f^{-1}(p)$ is a star fiber, and 
        \item $p$ is a branch point of the double cover $C'\to C$.
    \end{itemize}
\end{lemma}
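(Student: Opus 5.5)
We compute both degrees through Euler characteristics, using Lemma \ref{lemma_e(S)}: $d=\frac{1}{12}e(X)=\frac{1}{12}\sum_{p} e(f^{-1}(p))$, and similarly $d'=\frac{1}{12}\sum_{q\in C'} e(f'^{-1}(q))$. The approach is to compare the two sums fiber by fiber, grouping the fibers of $f'$ according to the point of $C$ they lie over. There are two kinds of points $p\in C$ to consider.

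\emph{Non-branch points.} If $p$ is not a branch point of $\pi$, then $\pi^{-1}(p)$ consists of two points $q_1,q_2$. Near each $q_i$ the map $\pi$ is a local isomorphism, so the local family near $q_i$ is isomorphic to the one near $p$. Since the Kodaira--N\'eron (relatively minimal) model is determined locally, $f'^{-1}(q_i)\cong f^{-1}(p)$. Hence $p$ contributes $2e(f^{-1}(p))$ to $12d'$, exactly its contribution to $24d$.

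\emph{Branch points.} If $p$ is a branch point, there is a single point $q$ over it, and near $q$ the map is the quadratic cover $\tilde\Delta\to\Delta$ totally branched at $0$. The relatively minimal model of the base change therefore has fiber of type $F'$ over $q$, with $F=f^{-1}(p)$. This step requires the observation that relative minimality and the regular model are local over the base, so the global model $X'$ restricts to the local minimal model; I take this as standard. The contribution of $p$ to $24d-12d'$ is then $2e(F)-e(F')=12\delta(F)$.

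Summing over all $p\in C$ gives
\[12(2d-d')=\sum_{p\ \text{branch}} 12\,\delta(f^{-1}(p)).\]
A smooth fiber over a branch point stays smooth ($I_0\mapsto I_0$), so it contributes $0$. By Proposition \ref{prop_delta=0,1}, every other fiber contributes $\delta\in\{0,1\}$, with $\delta=1$ exactly for star fibers. Hence $\delta=2d-d'$ equals the number of branch points lying under star fibers, and this count is nonnegative, which gives $d'\le 2d$.

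The main point needing care is that Proposition \ref{prop_delta=0,1} covers all fiber types, including the non-star ones such as $I_n\mapsto I_{2n}$, $II\mapsto IV$, $III\mapsto I_0^*$, and $IV\mapsto IV^*$. Beyond Table \ref{table_star}, this uses the full base change table of \cite[p.105]{SchShi19}. I would verify directly that each non-star type gives $2e(F)=e(F')$: for example $2\cdot 2=4$, $2\cdot 3=6$, $2\cdot 4=8$, and $2n=2n$.
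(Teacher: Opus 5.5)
Your proposal is correct and follows essentially the same route as the paper: you express $d$ and $d'$ through fiber Euler numbers via Lemma~\ref{lemma_e(S)}, observe that unbranched points contribute equally to $2e(X)$ and $e(X')$, and note that a branch point contributes $2e(F)-e(F')=12\,\delta(F)$, which by Proposition~\ref{prop_delta=0,1} equals $12$ exactly for star fibers. You are somewhat more explicit than the paper about the locality of the relatively minimal model, smooth fibers over branch points, the non-star rows of the base change table, and deducing $d'\le 2d$ from the nonnegativity of the count.
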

\begin{proof}
    By Lemma \ref{lemma_e(S)}, $d=e(X)/12$, and $d'=e(X')/12$, and both $e(X)$ and $e(X')$ are determined by the Euler characteristic of the singular fibers. Hence, it suffices to compare the singular fibers on the two surfaces. When $p\in C$ corresponds to a singular fiber $F$ of $f$, then by Proposition \ref{prop_delta=0,1}, there are three possibilities
    \begin{itemize}
        \item[(i)] $p$ is unbranched for $\pi$, then it corresponds to two singular fibers on $X'$ of type $F_p$;
        \item[(ii)] $p$ is a branch point, and $F_p$ is not of star type, and $\delta(F_p)=0$;
        \item[(iii)] $p$ is a branch point, and $F_p$ is of star type, and $\delta(F_p)=1$.
    \end{itemize}
   Only the last case contributes to the difference $2e(X)-e(X')$. Using Lemma \ref{lemma_e(S)} again, we find 
   $$2e(X)-e(X')=\sum_{p\in \Delta}\big (2e(F_p)-e(F_p')\big ),$$
where $\Delta\subset C$ is the collection of points satisfying (iii), and $F_p'$ is the type of the singular fiber in $X'$ according to Table \ref{table_star}. After dividing the equation by 12, the result follows from the definition of $\delta$ in \eqref{eqn_delta(F)}.
\end{proof}



\begin{definition}\normalfont
    We call $\delta$ (cf. \eqref{eqn_delta-global}) the \textit{Euler defect} of the quadratic base change of elliptic surface $X\to C$ with respect to a double cover $C'\to C$. We say the base change is \textit{defective} if $\delta>0$.
\end{definition}
\begin{remark}\normalfont
    The Euler defect can be defined in general for higher degree base change. This will be studied more systematically in a subsequent paper. In this paper, we focus on the case of quadratic base change, when $g(C')=0,1$.
\end{remark}
\subsection{Defective double cover of an elliptic K3}
Now we start with an elliptic K3 surface $f:X\to \mathbb P^1$  with (a certain number of) star fibers. We denote by $f':X'\to C'$ the relatively minimal regular model of the elliptic surface that comes from the quadratic base change $C'\to \mathbb P^1$. The geometry of the surface $X'$ is listed below for $g(C')=0,1$ and $\delta=2,3,4$ below.

\begin{corollary}\label{cor_ell-ell-by-base-change}
\-
    \begin{itemize}
        \item[(1)] ($\delta=2$, K3 surface) If $\mathbb P^1=C'\to \mathbb P^1$ is the double cover branched at points lying under two star fibers, then $f':X'\to \mathbb P^1$ is an elliptic K3 surface. 
        \item[(2)] ($\delta=3$, elliptic-elliptic surface) If $E=C'\to \mathbb P^1$ is the double cover branched at four points with three points lying under star fibers, then $f':X'\to E$ is an elliptic-elliptic surface. 
        \item[(3)] ($\delta=4$, abelian surface) If $E=C'\to \mathbb P^1$ is the double cover branched at four points that all lie under star fibers, then $f':X'\to E$ is a trivial family, so $X'$ is isomorphic to a product of elliptic curves. 
    \end{itemize}
\end{corollary}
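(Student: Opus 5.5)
The plan is to compute $d'=\deg\mathbb L_{X'}$ from Lemma \ref{lemma_Euler-defect}, then read off the geometry of $X'$ from the genus of $C'$ together with Remark \ref{remark_pg=1} and the formula $p_g(X')=\deg\mathbb L_{X'}+g(C')-1$. Since $X$ is an elliptic K3 surface, $d=\deg\mathbb L_X=2$, so $d'=4-\delta$. The Euler defect $\delta$ is the number of branch points lying under star fibers, which is $2,3,4$ in cases (1), (2), (3) respectively. This gives $d'=2,1,0$.

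By Riemann--Hurwitz, a double cover of $\mathbb P^1$ branched at two points is $\mathbb P^1$, and one branched at four points has genus one. That matches the $C'$ stated in each case. The base change $X'\to C'$ still has a section, namely the pullback of the section of $X$, and we take its relatively minimal model.

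\textbf{Case (1).} Here $g(C')=0$ and $d'=2$. So $e(X')=24$, $p_g=1$, and $q=g(C')=0$, since $d'>0$ forces $q=g(C')$. The canonical bundle is $\omega_{X'}=f'^*(\omega_{\mathbb P^1}\otimes\mathbb L_{X'})$, which has degree $-2+2=0$ on the base and is therefore trivial. Hence $X'$ is a K3 surface.

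\textbf{Case (2).} Here $g(C')=1$ and $d'=1$. Then $p_g=1+1-1=1$, and $d'>0$ means there is a singular fiber. By Remark \ref{remark_pg=1} this is exactly an elliptic-elliptic surface. For $q=1$ we can use the standard fact that $q=g(C')$ when $\chi(\OO)=d'>0$.

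\textbf{Case (3).} Here $d'=0$, so $e(X')=0$ by Lemma \ref{lemma_e(S)}, and all fibers are smooth. One should also check that the fibers over non-branch points and over the branch points are smooth. Over the branch points this holds because star fibers need not become $IV$, $I_{2n}$ in general. So the condition really needed is that the four branch fibers are of type $I_0^*$-like with trivial base change, i.e.\ $\delta(F)=1$ and $e(F')=2e(F)-12$.

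\emph{Main obstacle.} Concluding from $e(X')=0$ alone is subtle, because it also requires the non-branch singular fibers to be absent. Indeed, every singular fiber contributes positively to $e$, so $e(X')=0$ forces $X$ to have no singular fibers other than the four star fibers, each becoming smooth.

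A smooth elliptic fibration with a section over $E$ and $\deg\mathbb L=0$ has $j$-invariant constant, since there is no pole. Its monodromy is finite and trivial after the base change, because the local monodromies at the branch points become trivial ($F'$ smooth). I would then argue that $\mathbb L_{X'}$ is torsion. It is in fact trivial, since the double cover kills the $-1$ twist coming from the $I_0^*$-type quadratic twist. So the Weierstrass model is a constant curve $E_0$ with trivial twist, and $X'\cong E_0\times E$.

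I expect the step showing that the isotrivial family is globally trivial, and not merely a nontrivial torsor or twist, to be the main point. I would handle it using the section, which kills torsors, together with the identification $\mathbb L_{X'}\cong\pi^*\mathbb L_X(-\sum p_i)$, which pins down the twist.
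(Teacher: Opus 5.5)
Your proposal is correct. Cases (1) and (2) follow the paper's proof: compute $d'=4-\delta$ from Lemma \ref{lemma_Euler-defect} and read off the surface from $g(C')$. Your checks on $\omega_{X'}$ and $q=g(C')$ spell out what the paper leaves implicit.

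Case (3) takes a genuinely different route. The paper argues topologically:
\begin{itemize}
\item four star fibers on a K3 must be $4\times I_0^*$, since each has $e\ge 6$ and the total is $24$;
\item each local monodromy is $-I$;
\item so the double cover branched at those four points trivializes the global monodromy, and a smooth family with a section and trivial monodromy is a product.
\end{itemize}
You instead obtain smoothness of all fibers from $e(X')=12d'=0$, which is equivalent to the paper's Euler count. You then pin down the twist algebraically via $\mathbb L_{X'}\cong\pi^*\mathbb L_X(-R)$, where $R=q_1+\dots+q_4$ is the ramification divisor on $E$. Write $q_i$ rather than the branch points $p_i$ there. This identification is right: at an $I_0^*$ ramification point the pulled-back Weierstrass data has $v(a_4)\ge 4$, $v(a_6)\ge 6$, $v(\Delta)=12$, so exactly one minimalization step is needed. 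Your route has the advantage of producing the product structure directly, without the step ``trivial monodromy plus a section implies trivial family.''

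Two points need tightening.

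First, your sentence that the monodromy is trivial ``because the local monodromies at the branch points become trivial'' is not a valid reason. Local triviality says nothing about monodromy along the generators of $\pi_1(E)\cong\Z^2$. For example, the quadratic twist of $E_0\times E$ by an \'etale double cover of $E$ has all fibers smooth and a section, yet has nontrivial global monodromy; there $\mathbb L$ is a nontrivial $2$-torsion bundle. The paper avoids this because the four local monodromies are the central element $-I$, so the monodromy representation is exactly the $\Z/2$-character defining $E\to\PP^1$.

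Second, in your approach it is the triviality of $\mathbb L_{X'}$ that carries the argument, so you should prove it rather than appeal to ``killing the $-1$ twist.'' It is one line. We have $\mathbb L_X\cong\OO_{\PP^1}(2)$, and Riemann--Hurwitz with $\omega_E\cong\OO_E$ gives $\OO_E(R)\cong\pi^*\omega_{\PP^1}^{-1}\cong\pi^*\OO_{\PP^1}(2)$. Hence $\mathbb L_{X'}\cong\OO_E$. Then $a_4,a_6$ are global sections of $\OO_E$, hence constants with nonzero discriminant. So the Weierstrass model, which equals $X'$ since all fibers are smooth, is $E_0\times E$.
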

\begin{proof}
By Lemma \ref{lemma_Euler-defect}, the Euler defect of the three quadratic base change are $\delta=2,3,4$, respectively. 
The elliptic K3 surface $f:X\to \mathbb P^1$ has degree of fundamental line bundle $d=2$. Therefore, by Lemma \ref{lemma_Euler-defect}, $d'=2\cdot 2-\delta$, which equals to $2,1$, and $0$, respectively. In case (1), the base curve is $\mathbb P^1$ and $d'=2$, so $X'$ is a K3 surface. In case (2), the base curve has genus one and $d'=1$, so $X'$ is an elliptic-elliptic surface. In case (3), $d'=0$ implies that $X'\to C'$ is isotrivial with all fibers smooth. In fact, it is trivial because the only possible configuration of four star fibers on a K3 surface is $4\times I_0^*$. The local monodromy on $H^1$ near each singular fiber has order 2, so the double cover $C' \to \PP^1$ branched at those four critical values trivializes the monodromy representation globally.
\end{proof}

Note that case (1) includes the Shioda--Inose double covers (Lemma \ref{lemma_ShiodaInose}), together with more general double covers studied in \cite{Mehran07}. Case (3) is the standard rational double cover $E\times E'\dashrightarrow \Kum(E\times E')$ of the Kummer surface for a product of two elliptic curves.
\begin{remark}\normalfont
The constructions in cases (1) and (3) are rigid with respect to $X\to \mathbb P^1$, while in the case (2), the construction of elliptic-elliptic surfaces has a free parameter, since one can vary the fourth branch fiber. The variation of Hodge structure for such a one-parameter family will be studied in Section \ref{sec_Hodge}.
\end{remark}

\begin{remark}\normalfont
   The assumptions in Corollary \ref{cor_ell-ell-by-base-change} put a lower bound on the Picard number $\rho=\textup{rank}(NS(X))$ of the K3 surface $X$. Indeed, case (1), (2), and (3) assume $X\to \mathbb P^1$ has at least 2, 3, and 4 star fibers. A star fiber has at least $5$ components (realized by $I_0^*$), so by the Shioda--Tate formula \cite[Corollary 6.7]{SchShi19}, $\rho$ is at least $10,14$, and $18$, respectively. This gives a lower bound on Picard number of $X'$ as well (see Lemma \ref{lemma_TXpullback}).
\end{remark}

 \subsection{Birational geometry of defective double cover}
In this section, we analyze defective double covers in more detail from the point of view of birational geometry.
\begin{lemma}\label{lemma_normalization}
    Suppose that the fiber $F$ over 0 of a Kodaira--N\'eron model $X\to\Delta$ is a star fiber. Let $\tilde\Delta\to \Delta$ be a double cover branched at $0$. Then the normalization $\tilde{X}'$ of $X\times_{\Delta}\tilde{\Delta}$ is smooth, and contains four disjoint $(-1)$ curves. After contracting the four curves, one obtains the relatively minimal model $X'\to \tilde\Delta$ with singular fiber of type $F'$ listed in Table \ref{table_star}. 
\begin{proof}
   According to \cite[p.125]{HM98}, the composition $p:\tilde{X}'\to X\times_{\Delta}\tilde{\Delta}\to X$ is the double cover branched along the components with odd multiplicity on the fiber $F$. The result follows from the Lemma \ref{lemma_star-1comp}.
\end{proof}

\end{lemma}

Of course, one can formulate a version of Lemma \ref{lemma_normalization} for any compact elliptic surface. But from now on, we will focus on elliptic-elliptic surfaces.

\begin{proposition}\label{prop_normalization-ellell}
    Let $S\to E$ be an elliptic-elliptic surface arising from quadratic base change of an elliptic K3 surface $X\to \mathbb P^1$ branched at three star fibers and one smooth fiber. Then the rational double cover $S\dashrightarrow X$ can be resolved via the following diagram:
    
\begin{figure}[ht]
    \centering
\begin{equation*}
\begin{tikzcd}
S'\arrow[r,"p"]\arrow[d,"\sigma"] & X\\
S,\arrow[ur,dashrightarrow,"\pi"']
\end{tikzcd}
\end{equation*}
\end{figure}

\noindent where $\sigma$ is the blow-up at $12$ points, and $p$ is branched double cover ramified along the exceptional divisors and one smooth fiber, which is $\sigma^*K_S$.
\end{proposition}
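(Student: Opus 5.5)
We build the resolution fiberwise from Lemma \ref{lemma_normalization} and then globalize. Let $\pi_E:E\to\mathbb P^1$ be the double cover branched at $p_1,p_2,p_3$, which lie under the three star fibers $F_1,F_2,F_3$ of $X$, and at a fourth point $p_4$, which lies under a smooth fiber $F_4$. Let $\tilde S$ be the normalization of $X\times_{\mathbb P^1}E$.

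\textbf{Step 1: the local structure of $\tilde S$.}
\begin{itemize}
\item Over $\mathbb P^1\setminus\{p_1,\dots,p_4\}$, the fiber product is an étale double cover of $X$.
\item Over $p_4$, the fiber $F_4$ is smooth and $E\to\mathbb P^1$ is simply branched. So the fiber product is smooth there, and the map to $X$ is a double cover branched along $F_4$.
\item Over each $p_i$ with $i\le 3$, Lemma \ref{lemma_normalization} applies. By the Horikawa--Miranda description, $\tilde S\to X$ is locally the double cover branched along the four odd-multiplicity components of $F_i$. These components are disjoint by Lemma \ref{lemma_star-1comp}.
\end{itemize}
Globally, $p:\tilde S\to X$ is therefore the double cover branched along the smooth divisor
\[
B=F_4+\sum_{i=1}^3\bigl(\text{odd-multiplicity components of }F_i\bigr).
\]
This divisor is disjoint and smooth, so $\tilde S$ is smooth. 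One should check that the class of $B$ is divisible by $2$ in $\Pic(X)$. This is automatic, because the cover exists as a normalized fiber product: the branch divisor is $f^*(\text{branch of }E)$ minus twice the even-multiplicity part.

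\textbf{Step 2: contracting to $S$.}
\begin{itemize}
\item By Lemma \ref{lemma_normalization}, over each $p_i$ with $i\le3$, $\tilde S$ contains exactly four disjoint $(-1)$-curves. These are the reduced preimages of the branch components.
\item A $(-2)$-curve $C$ in the branch locus satisfies $p^*C=2C'$, so $4C'^2=2C^2=-4$ and $C'^2=-1$. This explains why the $(-1)$-curves are precisely the preimages of the branch components.
\item Contracting these $3\times4=12$ curves fiberwise gives the relatively minimal model of Lemma \ref{lemma_normalization}. Since $S\to E$ is relatively minimal with the same generic fiber, this model is $S$.
\end{itemize}
Set $S'=\tilde S$ and let $\sigma$ be the contraction. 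Then $\sigma$ is a blow-up at $12$ points, and the ramification of $p$ consists of the $12$ exceptional curves together with the preimage of $F_4$. That preimage is a smooth fiber of $S'\to E$ and is disjoint from the exceptional locus.

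\textbf{Step 3: the ramification fiber is $\sigma^*K_S$.} Let $R\subset S'$ be the ramification divisor. Then
\[
K_{S'}=p^*K_X+R=R,
\]
since $K_X=0$. On the other hand, $K_{S'}=\sigma^*K_S+\sum_j e_j$ over the $12$ exceptional curves. Comparing the two expressions gives
\[
\sigma^*K_S=R-\sum_j e_j=\tilde F_4,
\]
where $\tilde F_4$ is the fiber over the fourth branch point of $E$. This is consistent with $K_S=f^*(\omega_E\otimes\mathbb L_S)$ having degree $1$ on $E$, i.e. $K_S$ is a single fiber, the canonical fiber.

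The main care point is Step 1: showing that the global normalization coincides with the double cover branched exactly along $B$. This requires the parity (reducedness) bookkeeping of $f^*p_i$ at each branch point. The rest follows from Lemma \ref{lemma_normalization} and the canonical bundle formula.
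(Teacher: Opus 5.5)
Your proof is correct and follows the same route as the paper. Both arguments take $S'$ to be the normalization of $E\times_{\mathbb P^1}X$, use Lemma \ref{lemma_normalization} to find the $4\times 3=12$ disjoint $(-1)$-curves whose contraction gives the relatively minimal model $S$, and identify the remaining ramification fiber with $K_S$ by Riemann--Hurwitz. Your version simply spells out the branch divisor $B$, the computation $C'^2=-1$, and the comparison $K_{S'}=R=\sigma^*K_S+\sum_j e_j$; note that $h^0(K_S)=p_g(S)=1$ is what upgrades the resulting linear equivalence to an equality with the canonical fiber.
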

\begin{proof}
    Since the branch locus of $E\to \mathbb P^1$ contains the values under three star fibers, using Lemma \ref{lemma_normalization}, the normalization $S'$ of $E\times_{\mathbb P^1}X$ has $4\times 3=12$ curves with self-intersection $(-1)$. Contracting these curves, we obtain the relatively minimal model $S\to E$, which is an elliptic-elliptic surface (cf. Corollary \ref{cor_ell-ell-by-base-change}, (2)). The remaining smooth fiber where $\pi$ is ramified must be $K_S$ by the Riemann-Hurwitz formula.
\end{proof}

In particular, there is an involution $\iota'$ of $S'$ fixing 13 curve components, and $S'/\iota'\cong X$. By descending the involution to $S$ and commuting the $\iota$-quotient with the blow-up, we have:
\begin{corollary}\label{cor_12fixpoint}
    There is a symplectic involution $\iota: S\to S$, meaning that it acts trivially on $H^{2,0}(S)$, with $12$ isolated fixed points and a fixed curve $K_S$. The minimal resolution of $S/\iota$ is the K3 surface $X$.
\end{corollary}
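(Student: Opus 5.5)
The plan is to descend the deck involution $\iota'$ of the double cover $p:S'\to X$ from Proposition \ref{prop_normalization-ellell} through the blow-down $\sigma$. Then we check its fixed locus and its action on $H^{2,0}$, and identify the quotient.

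\textbf{Descent.} The ramification locus of $p$ is the union of the $12$ disjoint exceptional $(-1)$-curves $C_1,\dots,C_{12}$ and the fiber $\sigma^*K_S$. Since $\iota'$ fixes each $C_i$ pointwise, it preserves every $C_i$. Because $\sigma$ contracts exactly the $C_i$, the birational involution $\iota:=\sigma\circ\iota'\circ\sigma^{-1}$ of $S$ extends holomorphically across the points $q_i=\sigma(C_i)$. One way to see this is that $S$ is relatively minimal over $E$, so every birational automorphism of $S$ compatible with the fibration (here covering the deck involution of $E\to\PP^1$) is biregular. Alternatively, $\iota'$ preserves the exceptional configuration, and Castelnuovo's contraction is canonical.

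\textbf{Fixed locus.} Away from the $q_i$, $\sigma$ is an isomorphism, so $\mathrm{Fix}(\iota)$ there equals the image of the remaining fixed curve, namely $K_S$. At each $q_i$, the differential $d\iota_{q_i}$ acts on the tangent directions, which are parametrized by $C_i\cong\PP^1$. Since $\iota'$ fixes $C_i$ pointwise, $d\iota_{q_i}$ is a scalar. It is not the identity, because a holomorphic involution with trivial differential at a point is the identity near it. Hence $d\iota_{q_i}=-\mathrm{id}$, so each $q_i$ is an isolated fixed point. This is consistent with $\iota'$ fixing the exceptional curve obtained by blowing up an isolated fixed point of type $(-1,-1)$.

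\textbf{Symplecticity.} On $S'$, the fixed curve $\sigma^*K_S$ has normal eigenvalue $-1$. A $(2,0)$-form $\omega$ on $S$ vanishes exactly along $K_S$ (the canonical fiber). Hence $\omega$ is either invariant or anti-invariant. If it were anti-invariant, it would descend to zero on $X$. To exclude this, it is cleaner to argue at an isolated fixed point: there $d\iota=-\mathrm{id}$ has determinant $+1$ on $\wedge^2 T^*$. Since $\omega(q_i)\neq 0$ (the $q_i$ lie on star-fiber preimages, not on $K_S$), we get $\iota^*\omega=\omega$. So $\iota$ is symplectic.

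\textbf{Quotient.} $S/\iota$ has $12$ $A_1$ singularities at the images of the $q_i$, and is smooth along the image of $K_S$, since a reflection-type fixed curve gives a smooth quotient. Blowing up the $q_i$ before quotienting gives $S'/\iota'=X$, in which the images of the $C_i$ are $(-2)$-curves. This is precisely the minimal resolution of the $12$ nodes. So the minimal resolution of $S/\iota$ is $X$, which is a K3 surface.

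The main obstacle is the extension of $\iota$ across the $q_i$ and the determination of the local type there. The differential argument above handles both.
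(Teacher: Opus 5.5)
Your proposal is correct and follows the same route as the paper. You descend the deck involution $\iota'$ of $p\colon S'\to X$ through the blow-down $\sigma$, then get $X$ as the minimal resolution of $S/\iota$ by commuting the quotient with the blow-up of the twelve points; your local analysis at the $q_i$ (scalar differential $d\iota_{q_i}=-\mathrm{id}$, so determinant $+1$ on $\wedge^2T^*_{q_i}$ where $\omega(q_i)\neq 0$) supplies the fixed-point and symplecticity details the paper leaves implicit, and makes the aside about an anti-invariant form ``descending to zero'' unnecessary.
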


This is formally analogous to the picture for Shioda--Inose structures for K3 surfaces; see \cite{vGS07}. The K3 surface $Y$ that arises from rational double cover of an elliptic Kummer K3 surface is branched along two star fibers, and each contains 4 components with odd multiplicity (cf. Lemma \ref{lemma_star-1comp}). Therefore, they correspond to the $8$ fixed points of the Nikulin involution on $Y$.

\subsection{Hodge theory of defective double cover}\label{sec_Hodge} We analyze the transcendental Hodge structure of the defective double cover of elliptic surfaces, and investigate a one-parameter family of elliptic-elliptic surface in Corollary \ref{cor_ell-ell-by-base-change}.
\begin{lemma}\label{lemma_TXpullback}In the setup of Proposition \ref{prop_normalization-ellell}, the maps  $\pi^*:T_X\hookrightarrow T_S$ and $\pi_*:T_S\hookrightarrow T_X$ are injective morphisms of $\Z$-Hodge structures. In particular,  there is an isomorphism of (unpolarized) $\bbQ$-Hodge structures $$T_X\otimes\bbQ\cong T_S\otimes \bbQ,$$
    and $\pi^*(T_X)$ is a finite index sublattice of $T_S$.
\end{lemma}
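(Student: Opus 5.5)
Using the resolution $S\xleftarrow{\sigma}S'\xrightarrow{p}X$ of Proposition \ref{prop_normalization-ellell}, we have $\pi^*=\sigma_*\circ p^*$ and $\pi_*=p_*\circ\sigma^*$ on transcendental lattices. Since $\sigma$ is a blow-up at $12$ points, $\sigma^*$ and $\sigma_*$ restrict to mutually inverse Hodge isometries $T_S\cong T_{S'}$ (the exceptional classes are algebraic and orthogonal to $\sigma^*H^2(S)$). So it suffices to study the finite morphism $p:S'\to X$ of degree $2$.

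First, $p^*$ and $p_*$ are morphisms of $\Z$-Hodge structures, being Gysin and pullback maps of a proper morphism of smooth surfaces. They preserve the transcendental lattices: for $x\in T_X$ and any divisor $D$ on $S'$ we have $p^*x\cdot D=x\cdot p_*D=0$, since $p_*D$ is a divisor class; symmetrically $p_*y\cdot D'=y\cdot p^*D'=0$ for $y\in T_{S'}$ and $D'$ a divisor on $X$. Next, the projection formula gives $p_*p^*=2\cdot\mathrm{id}$ on $H^2(X,\Z)$. Since $H^2(X,\Z)$ is torsion free ($X$ is a K3 surface), $p^*$ is injective on $T_X$, and hence so is $\pi^*$. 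For $\pi_*$, compare ranks via Lemma \ref{lemma_trans-irreducible}. The rational Hodge structure $p^*T_{X,\bbQ}\subset T_{S',\bbQ}$ contains $p^*H^{2,0}(X)$, which is nonzero because $p^*$ is injective on $T_X$. As $h^{2,0}(S')=p_g(S)=1$, this is all of $H^{2,0}(S')$. By minimality of $T_{S',\bbQ}$, we get $p^*T_{X,\bbQ}=T_{S',\bbQ}$, so the ranks agree.

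Now $p_*p^*=2$ shows that $p_*$ is surjective rationally onto $T_{X,\bbQ}$. Since $T_{S',\bbQ}$ and $T_{X,\bbQ}$ have equal dimension, $p_*:T_{S',\bbQ}\to T_{X,\bbQ}$ is an isomorphism. Because $T_{S'}$ is free, $p_*$ is injective on $T_{S'}$, and hence so is $\pi_*$. This yields the $\bbQ$-Hodge isomorphism $T_X\otimes\bbQ\cong T_S\otimes\bbQ$. Finally, $\pi^*(T_X)$ is a subgroup of the free group $T_S$ of the same rank, so it has finite index. Alternatively, all of this follows from the general remark in Section \ref{subsec_transcendental} once we note $p_g(S)=p_g(X)=1$.

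The only delicate point is making sure that $\pi^*,\pi_*$ are really well defined as maps $T\to T$ at the integral level, independent of the resolution, and that $\sigma^*,\sigma_*$ identify $T_S$ with $T_{S'}$ integrally. This is where the primitivity statement of Lemma \ref{lemma_trans-irreducible} and the torsion-freeness of $H^2(S,\Z)$ (Lemma \ref{lemma:GS}, as $S$ has a section and singular fibers) are used. No claim about compatibility with the intersection pairing is needed; indeed the pairing is scaled by $2$ under $p^*$, so the lemma stays at the level of unpolarized Hodge structures.
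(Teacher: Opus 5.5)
Your proof is correct and follows essentially the same route as the paper: $\pi_*\pi^*=2$ gives injectivity of $\pi^*$, and $h^{2,0}(S)=1$ (minimality, equivalently irreducibility, of $T_{S,\bbQ}$) forces $\pi^*T_{X,\bbQ}=T_{S,\bbQ}$, which yields the rational isomorphism and the finite index. The differences are minor: you verify the Hodge-morphism property on the resolution $S'$ rather than via Hartogs, and you make the injectivity of $\pi_*$ explicit by a rank count, a step the paper leaves implicit.
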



\begin{proof}
As explained in Section \ref{subsec_transcendental}, both $\pi_*$ and $\pi^*$ are well-defined on transcendental lattices for any rational map $\pi$.
Secondly,  $\pi$ has degree $2$, so $\pi_*\pi^*:T_X\to T_X$ is $2\cdot Id$. Hence, $\pi^*$ is injective. Moreover, by Hartogs theorem, the pullback of any holomorphic 2-form extends, so
$\pi^*\otimes\C:H^{2,0}(X)\to H^{2,0}(S)$ preserves the Hodge filtration and $\pi^*$ is a morphism of $\Z$-Hodge structures. Since $\dim H^{2,0}(S)=1$, $T_S\otimes \bbQ$ is an irreducible $\bbQ$-Hodge structure, and there is an isomorphism $\pi^*T_X\otimes \bbQ\cong T_S\otimes \bbQ$ of $\bbQ$-Hodge structures. In particular, $\pi^*T_X$ has finite index in $T_S$.
\end{proof}

\begin{remark}\normalfont \label{remark_Morrison}
  By \cite{Morrison84}, the morphisms $\pi_*: T_S(2)\to T_X$ and $\pi^*:T_X(2)\to T_S$ preserve the indicated lattice pairings.
\end{remark}

Next, we allow  the construction of Proposition \ref{prop_normalization-ellell} to vary in a family, and study the corresponding VHS.
\begin{lemma}\label{lemma_smoothfamily}
   There is a non-isotrivial family of elliptic-elliptic surfaces $f_U:\mathcal S_U \to U$ over a quasi-projective curve $U$, that arises from defective quadratic base changes of a fixed elliptic K3 surface $X\to \PP^1$ (cf. Corollary \ref{cor_ell-ell-by-base-change}) at three star fibers. More precisely, there is a non-isotrivial family $g_U:\mathcal E_U \to U$ of elliptic curves and map $h_U:\mathcal S_U \to \mathcal E_U$ such that $f_U = g_U\circ h_U$, and for every $t\in U$, the induced morphism on fibers
   $$h_t:S_t \to E_t$$
   is an elliptic-elliptic surface that is the Kodaira--N\'{e}ron model of the base change $E_t \times_{\PP^1} X$.
\end{lemma}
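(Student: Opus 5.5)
We construct the family explicitly. Fix an elliptic K3 surface $f:X\to\PP^1$ with at least three star fibers, say over $p_1,p_2,p_3\in\PP^1$; for instance the Shioda--Inose Kummer fibration of Lemma \ref{lemma_ShiodaInose} of type $II^*,IV^*,I_0^*$. Let $U\subset\PP^1$ be the complement of the finitely many points under singular fibers of $f$. For $t\in U$, let $E_t\to\PP^1$ be the double cover branched at $\{p_1,p_2,p_3,t\}$. Globally, set
$$\mathcal E_U=\{(y,x,t): y^2=(x-p_1)(x-p_2)(x-p_3)(x-t)\}$$
(compactified fiberwise in the usual way, with one point at infinity removed by a coordinate change if some $p_i=\infty$). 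Then $g_U:\mathcal E_U\to U$ is a smooth family of genus one curves. It has a section given by a ramification point over $p_1$, so it is a family of elliptic curves, and it comes with a map $\mathcal E_U\to U\times\PP^1$. Its $j$-invariant is a nonconstant function of the cross-ratio of $(p_1,p_2,p_3,t)$, so $g_U$ is non-isotrivial.

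Next, form the fibered product $\mathcal E_U\times_{\PP^1}X$ over $U$, using the projection $U\times\PP^1\to\PP^1$. Normalize it, and then contract relative $(-1)$-curves. Over each $t$ this reproduces Proposition \ref{prop_normalization-ellell}. The branch points $p_1,p_2,p_3$ do not move, so the local picture near the three star fibers is constant in $t$: by Lemma \ref{lemma_normalization}, the normalization along $\{p_i\}\times U$ is smooth and contains four disjoint families of $(-1)$-curves, each isomorphic to $\PP^1\times U$. Near the moving branch point $t$, the fiber of $f$ is smooth, so the double cover of $X$ is branched along a smooth fiber, and the total space is smooth there. Contracting the $12$ families of $(-1)$-curves simultaneously gives a smooth total space $\mathcal S_U$ with maps $h_U:\mathcal S_U\to\mathcal E_U$ and $f_U=g_U\circ h_U$. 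A section of $X\to\PP^1$ pulls back to a section of $h_U$.

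For the fiberwise statement, fiber products and normalization commute with restriction to $t$. This holds because the family is flat and its fibers are reduced and normal after normalization, which we check via the constant local models above. Also, contracting a family of $(-1)$-curves restricts to contracting each fiber. Hence $h_t:S_t\to E_t$ is the relatively minimal model, that is, the Kodaira--N\'eron model of $E_t\times_{\PP^1}X$. By Corollary \ref{cor_ell-ell-by-base-change}(2) it is an elliptic-elliptic surface, with Euler defect $3$.

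The main obstacle is compatibility with base change: showing that the relative normalization and relative contraction behave well in the family. I would handle it through the explicit local models. Near $p_i$, the situation is a product of the fixed local model with $U$. Near $t$, the map is a double cover branched along the smooth divisor $\{(x,t):x=t\}$ pulled back to $X$. Away from these loci, the map is étale. Non-isotriviality of $f_U$ follows from that of $g_U$, since $E_t$ is the Albanese of $S_t$.
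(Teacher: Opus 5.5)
Your proposal is correct and follows essentially the same construction as the paper: a double cover of (base)$\times\PP^1$ branched along the three fixed star-fiber values and one moving value, then the fibered product with $X$, normalization, and simultaneous contraction of the $12$ divisors swept out by the $(-1)$-curves. The differences are minor. First, you take $\lambda=\mathrm{id}$ over the affine curve $U$, where $2$-divisibility of the branch divisor is automatic; the paper instead uses a projective curve $C$ with $\lambda$ of even degree so that the family extends for the later compactification. Second, you justify the contraction and its compatibility with restriction to fibers through explicit local product models, whereas the paper invokes the relative cone theorem, which directly yields the contraction as a projective morphism over $U$.
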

\begin{proof}
 For convenience, assume that the star fibers of $X\to \PP^1$ lie over the values 
 $$\{0,1,2\}\subset \PP^1.$$
 Let $C$ be a projective curve with a non-constant morphism $\lambda:C \to \PP^1$ of even degree, and let $\mathcal E$ be a cyclic double cover of $C\times \PP^1$ branched at the 2-divisible divisor
 $$D=\Gamma_\lambda + \sum_{j=0}^2 C\times \{j\},$$
 where $\Gamma_\lambda$ is the graph of $\lambda$.
 Let $U = C \smallsetminus \textup{pr}_1(D_{sing})$ be the open subset of $C$ over which $D$ is \'{e}tale. Observe that $g_U:\mathcal E_U \to U$ is an elliptic fibration with four distinguished sections, and it is non-isotrivial because $\lambda$ was non-constant. Now shrink $U$ further by removing $\lambda^{-1}(c)$ for any other critical values of $X \to \PP^1$ besides $\{0,1,2\}$.
 
 There is a morphism $\mathcal E_U \to \PP^1$ given by the composition of the double cover above with the second projection map $C\times \PP^1 \to \PP^1$. The fibered product
 $$\mathcal E_U \times_{\PP^1} X$$
 has normalization $\mathcal S'_U$, which is birational to the desired $\mathcal S_U$. 
We will construct a commutative diagram generalizing Proposition \ref{prop_normalization-ellell} to the case of a family over $U$.

    \begin{figure}[ht]
    \centering
\begin{equation*}
\begin{tikzcd}
\mathcal{S}'_U/U\arrow[r,"P"]\arrow[d,"\Sigma"] & X\times U\\
\mathcal{S}_U/U\arrow[ur,dashed,"\Pi"].
\end{tikzcd}
\end{equation*}
\end{figure}
Here, $P$ is a branched double cover, and $\Sigma$ will be a birational contraction.
Note that each fiber of $\mathcal S'_U$ contains 12 disjoint $(-1)$-curves. The monodromy action of $\pi_1(U)$ on this configuration is trivial, since they come from 12 curves in $X$, so each such exceptional curve traces out a divisor $R_i\subset \mathcal S_U$. The $P$-pullback of an ample divisor on $X\times U$ gives an ample divisor on $\mathcal S'_U$ which is also monodromy-invariant.
Hence, by the Relative Cone Theorem \cite[Theorem 3.25]{KollarMori98}, there is a sequence of contractions over $U$ of the ruled divisors $R_i$, whose composite $\Sigma:\cS_U'\to \cS_U$ results in the desired family $f_U:\mathcal S_U \to \mathcal E_U\to U$.

\end{proof}
\begin{proposition}\label{prop_trivialVHS}
The family constructed in Lemma \ref{lemma_smoothfamily} above satisfies the following.
 \begin{itemize}
       \item The VHS on $H^2$ is locally constant. More precisely, the polarized variation of Hodge structure $R^2f|{_U*}\Z$ admits a decomposition after tensoring with $\bbQ$ $$R^2f|{_U*} \mathbb{Q}= \mathcal T_{\bbQ}\oplus \mathcal{NS}_{\bbQ},$$ where $\mathcal T$ is the sublocal system defined by transcendental lattices $T_{S_t}$ and $ \mathcal{NS}=\mathcal T^{\perp}$ is the sublocal system defined by N\'eron--Severi groups $NS(S_t)$. The factor $\mathcal T_{\bbQ}$ has trivial monodromy,
      and  $\mathcal{NS}_{\bbQ}$ has finite monodromy.
\item The VHS on $H^1$ is non-constant.
   \end{itemize}
\end{proposition}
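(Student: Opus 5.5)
The plan is to compare every fiber $S_t$ with the fixed K3 surface $X$, using the rational double covers $\Pi_t:S_t\dashrightarrow X$ that fit together over $U$ as in the proof of Lemma \ref{lemma_smoothfamily}.

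\textbf{The transcendental part.} The key object is the relative pullback
\[
\Pi^*=\Sigma_*\circ P^*:\; H^2(X,\Z)\otimes \underline{\Z}_U \longrightarrow R^2f_{U*}\Z .
\]
Because it is defined globally over $U$ by the correspondence $\mathcal S'_U\to X\times U$, it is a morphism of local systems. Its source is the constant local system with fiber $H^2(X,\Z)$.

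Restricting to $T_X$ and applying Lemma \ref{lemma_TXpullback} fiberwise, $\Pi^*$ sends $T_X$ injectively onto a finite-index sublattice of $T_{S_t}$ for every $t$. Consequently $\mathcal T_\bbQ$, the sub-local system with fibers $T_{S_t}\otimes\bbQ$, is the image of the constant system $T_X\otimes\bbQ$. Two things need checking here:
\begin{itemize}
\item $\mathcal T$ really is a sub-local system. I would derive this from the fact that its rank is constant, being equal to $\operatorname{rank} T_X$ at every $t$.
\item The image of the constant system is monodromy invariant. This is automatic, since $\Pi^*$ commutes with monodromy and monodromy acts trivially on the source.
\end{itemize}
Together these give trivial monodromy on $\mathcal T_\bbQ$. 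Injectivity then shows that monodromy also fixes the lattice $\mathcal T\cap R^2f_*\Z$ pointwise, since it fixes a finite-index sublattice and acts linearly.

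The Hodge structure on $\mathcal T_\bbQ$ is constant as well. The line $H^{2,0}(S_t)$ equals $\Pi^*H^{2,0}(X)$, because the pullback of the holomorphic $2$-form extends by Hartogs and is nonzero, $\pi^*$ being injective. So the period of $\mathcal T$ is constant.

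\textbf{The N\'eron--Severi part.} Since the cup product pairing is flat and nondegenerate on $\mathcal T_\bbQ$ (it is a Hodge isometry up to the factor $2$, by Remark \ref{remark_Morrison}), the orthogonal complement $\mathcal{NS}_\bbQ=\mathcal T^\perp_\bbQ$ is again a local system. Rationally this gives $R^2f_*\bbQ=\mathcal T_\bbQ\oplus \mathcal{NS}_\bbQ$.

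Fiberwise, $\mathcal{NS}$ is $NS(S_t)$. This lattice is hyperbolic by the Hodge index theorem, which would not by itself bound the monodromy group. So finiteness of monodromy will come from a different source: the monodromy preserves an ample class, namely the monodromy-invariant ample divisor built in the proof of Lemma \ref{lemma_smoothfamily}, pushed forward under $\Sigma$ or taken as $\Sigma_*P^*$ of an ample class, which remains big and nef.

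The monodromy group therefore acts on the negative definite lattice $h^\perp\cap NS(S_t)$ by integral isometries, and such a group is finite. Combined with the fact that it fixes $h$, the group acting on $NS(S_t)\otimes\bbQ$ is finite. For the integral statement, I would note that the group preserves $NS(S_t)=H^2(S_t,\Z)\cap \mathcal{NS}_\bbQ$, which is torsion-free by Lemma \ref{lemma:GS}.

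\textbf{Non-constancy on $H^1$.} The surface $S_t\to E_t$ has a section and $q=1$, so the Albanese of $S_t$ is $E_t$ and $H^1(S_t,\Z)\cong h_t^*H^1(E_t,\Z)$. This identification is compatible in the family via $h_U$. Hence the VHS on $H^1$ is $R^1g_{U*}\Z$, which is non-constant because $\mathcal E_U\to U$ is non-isotrivial by Lemma \ref{lemma_smoothfamily}.

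\textbf{Main obstacle.} The step I expect to be most delicate is making $\Pi^*$ a genuine morphism of local systems over $U$. This requires the contraction $\Sigma$ to be a topologically locally trivial family, so that $\Sigma_*$ is defined on $R^2$. I would handle it by noting that the $R_i$ are smooth $\PP^1$-bundles over $U$, so that $\mathcal S'_U\to U$ and $\mathcal S_U\to U$ are both smooth proper. Then $\Sigma$ induces a split injection $R^2f_*\Z\hookrightarrow R^2f'_*\Z$ with complement spanned by the classes $[R_i]$, and $\Sigma_*$ is the projection onto the first summand.
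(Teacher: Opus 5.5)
Your proposal is correct and takes essentially the same route as the paper. The relative pullback $\Pi^*$ exhibits $\mathcal T_\bbQ$ as the image of the constant system $T_X\otimes\underline{\bbQ}_U$. Finiteness on $\mathcal{NS}$ comes from a monodromy-invariant ample class; your negative-definite $h^\perp$ argument is the explicit form of the paper's ``compact and discrete'' remark. Non-constancy on $H^1$ comes from $H^1(S_t,\Z)\cong H^1(E_t,\Z)$ with $E_t$ varying. Your justification that $\Sigma_*$ is a morphism of local systems, via the $\PP^1$-bundle divisors $R_i$, fills in a step the paper takes for granted.
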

\begin{proof} The rational map 
$$\Pi: \mathcal S_U\dashrightarrow X\times U$$
gives a morphism of $\bbQ$-VHS
$$\Pi^*:H^2(X,\bbQ)\otimes \underline{\bbQ}_{U}\to R^2f|{_U*} \mathbb{Q},$$
whose stalks are $\pi_t^*: H^2(X,\bbQ)\to H^2(S_t,\bbQ)$. By Lemma \ref{lemma_TXpullback}, the image local system $\mathcal T_{\bbQ}=\textup{Im}(\Pi^*|_{T_X})$ is a constant sub-VHS isomorphic to $T_X\otimes \underline{\bbQ}_{U}$.


Since $NS(S_t)$ contains a monodromy-invariant ample class, the polarized N\'{e}ron--Severi is an integral PVHS of type $(1,1)$, so its monodromy is both compact and discrete, hence finite. In particular, $\mathcal{NS}$ is locally constant, and so is $R^2f|_{U*}\Z$. 

For the second statement, there is an isomorphism of Hodge structures $H^1(S_t,\Z)\cong H^1(E_t,\Z)$. Since $E_t$ varies non-isotrivially,  the last statement follows from Lemma \ref{lemma_smoothfamily}.

\end{proof}

\begin{corollary}\label{cor_constant-period}
    The weight 2 period map for the family $\mathcal S_U \to U$ is constant.
\end{corollary}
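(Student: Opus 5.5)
The weight 2 period map of $\mathcal S_U\to U$ takes $t$ to the Hodge line $H^{2,0}(S_t)\subset H^2(S_t,\C)$, transported to a fixed reference fiber $H^2(S_{t_0},\C)$ by parallel transport. I will show this line does not move, using the decomposition of Proposition \ref{prop_trivialVHS}. The approach is to pass to a finite étale cover of $U$ on which the monodromy of $R^2f_{U*}\Z$ becomes trivial. Since both $\mathcal T_\bbQ$ and $\mathcal{NS}_\bbQ$ have finite monodromy, the full local system $R^2f_{U*}\Z$ does too. Constancy of the period map is insensitive to such a cover: the period map on $U$ is valued in the quotient of the period domain by the monodromy group, and it is constant if and only if its lift to the cover is.

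On the cover, the lattice $H^2(S_t,\Z)$ is canonically identified with $H^2(S_{t_0},\Z)$, and the splitting $\mathcal T_\bbQ\oplus\mathcal{NS}_\bbQ$ becomes a splitting by constant subspaces. For every $t$, the line $H^{2,0}(S_t)$ lies in $T_{S_t}\otimes\C=\mathcal T_t\otimes\C$, because $H^{2,0}$ is orthogonal to $H^{1,1}\supset NS\otimes\C$. The key input is Lemma \ref{lemma_TXpullback}: $\pi_t^*:T_X\otimes\C\to T_{S_t}\otimes\C$ is a morphism of Hodge structures. Since $h^{2,0}=1$ on both sides and $\pi_t^*$ is injective, it carries $H^{2,0}(X)$ isomorphically onto $H^{2,0}(S_t)$. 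The maps $\pi_t^*$ are the stalks of the morphism of local systems $\Pi^*$ from the constant system $T_X\otimes\underline{\bbQ}_U$. Under the trivialization, each $\pi_t^*$ therefore corresponds to a single fixed map $\pi_{t_0}^*$. It follows that $H^{2,0}(S_t)=\pi_{t_0}^*H^{2,0}(X)$ is independent of $t$.

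Finally, the full Hodge filtration is determined by this line. We have $F^2=H^{2,0}$, and $F^1=(F^2)^{\perp}$ with respect to the cup product, which is constant on the local system. Hence the whole filtration is constant and the period map is constant.

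The one point that needs care is that $\Pi^*$ really is a map of local systems from a constant system, i.e. that $\pi_t^*$ varies compatibly with parallel transport. This holds because $\Pi$ is a family of rational maps: it is resolved in families by $\Sigma$ and $P$ in Lemma \ref{lemma_smoothfamily}, and for proper submersions the induced cohomology maps are locally constant. The remaining steps are routine.
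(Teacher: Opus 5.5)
Your proposal is correct and follows essentially the paper's route: the paper reads the corollary off Proposition \ref{prop_trivialVHS}, where $\Pi^*$ identifies $\mathcal T_\bbQ$ with the constant VHS $T_X\otimes\underline{\bbQ}_U$ and $\mathcal{NS}$ is of type $(1,1)$, so the Hodge filtration on $H^2$ is flat. Your steps $H^{2,0}(S_t)=\pi_t^*H^{2,0}(X)$ and $F^1=(F^2)^\perp$ spell out what the paper leaves implicit. The passage to a finite cover is harmless but unnecessary, since local flatness of the line $\pi_t^*H^{2,0}(X)$ already makes the lifted period map constant on the connected base.
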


\begin{remark}\normalfont
   Note that by Corollary \ref{cor_constant-period}, infinitesimal Torelli fails for each surface $S_t$ in the family. This is consistent with a criterion in \cite{GZ} for infinitesimal Torelli in terms of the $j$-map at the canonical fiber $K_{S_t}$ associated with $S_t \to E_t$. 
\end{remark}

\subsection{Examples of families with constant VHS}
In this section, we revisit some examples of  one-dimensional families  of elliptic-elliptic surfaces  with constant period map \cite{Ikeda,EGW}. We construct them in a uniform way using Corollary \ref{cor_ell-ell-by-base-change}.  
\begin{example}\cite{Ikeda} \normalfont Ikeda found a 4-dimensional family $\mathcal I_1$ of elliptic-elliptic surfaces arising from Prym varieties of bielliptic curves of genus 3. The period image of $\mathcal I_1$ is 3-dimensional, and the generic surface in $\mathcal I_1$ has six $I_2$ fibers. 

These surfaces also arise from quadratic base change of an elliptic K3 surface $X\to \mathbb P^1$ with singular fibers of type $$I_2\times 3, I_0^*\times 3,$$  branched at the three $I_0^*$ fibers and an additional smooth fiber $X_t$. Note that a generic elliptic K3 surface with these singular fibers has $\rho=17$, and varies with 3 moduli. The moving branch point provides an additional parameter, so the moduli for the corresponding elliptic-elliptic surfaces with six $I_2$ fibers has dimension 4, and it dominates $\mathcal I_1$.
\end{example}

\begin{example}\cite{EGW} \normalfont Engel--Greer--Ward found a 6-dimensional family $\mathcal I_2$ of {\it isotrivial} elliptic-elliptic surfaces with six cuspidal fibers. The period image of $\mathcal I_2$ is  5-dimensional.

A subfamily of codimension two can also be constructed from quadratic base change of an elliptic K3 surface 
$X\to \mathbb P^1$
with singular fibers of type $$II\times 3, I_0^*\times 3.$$ Again, such elliptic K3 surfaces vary with 3 moduli, and the moving branch point provides an additional parameter. The family of elliptic-elliptic surfaces that arise from base change dominates a codimension two locus inside $\mathcal I_2$.
\end{example}

\section{Surface degenerations}\label{sec_degenerations}

Our construction begins with a specific instance of Corollary \ref{cor_ell-ell-by-base-change}. We start from  the Kummer surface $X=\Kum(E_{\omega}\times E_{\omega})$ associated to the self-product of the elliptic curve $E_{\omega}=\C/(\Z\oplus\omega\Z)$, where $\omega$ is a third root of unity. The K3 surface $X$ has Picard number $20$, and admits an elliptic fibration \cite{ShiIno77}
\begin{equation}\label{eqn_Kum(E0*E0)}
    X\to \mathbb P^1
\end{equation}
with three singular fibers of types $II^*,IV^*,I_0^*$. By choosing a suitable coordinate on $\mathbb P^1$, we may assume that the three singular fibers lie over $0,1,2$ respectively. The transcendental lattice of $X$ is given by 
  $$T_X\cong \mathrm A_2(2)=\begin{bmatrix}
        4&2\\2&4
    \end{bmatrix}.
$$
Let $\lambda:\PP^1\to \PP^1$ be a nonconstant morphism, and let $E_t\to \PP^1$ be the elliptic curve branched at $0,1,2$ and an additional point $\lambda(t)$.
Let $S_t\to E_t$ be the relatively minimal model of $X\times_{\PP^1} E_t$. Then by Lemma \ref{lemma_smoothfamily}, we have:
     \begin{lemma}\label{lemma_ell-ell-fibers}
        There exists a smooth family of elliptic-elliptic surfaces $$\{S_t\to E_t\}_{t\in U}$$ parameterized by an open subset $U\subset \mathbb P^1$, with 
        \begin{itemize}
            \item[(i)] constant VHS on the transcendental Hodge structure $T_{S_t}$;
            \item[(ii)] non-constant VHS on $H^1$;
            \item[(iii)] maximal Picard number $\rho=h^{1,1}=12$;
            \item[(iv)] the singular fibers of $S_t\to E_t$ are of type $IV^*$, $IV$;
            \item[(v)] the Mordell--Weil rank of $S_t/E_t$ is 2.
        \end{itemize}
     \end{lemma}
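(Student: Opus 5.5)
The plan is to specialize Lemma \ref{lemma_smoothfamily} and Proposition \ref{prop_trivialVHS} to the Kummer surface $X=\Kum(E_\omega\times E_\omega)$, whose fibration \eqref{eqn_Kum(E0*E0)} has star fibers $II^*,IV^*,I_0^*$ over $0,1,2$. Take $C=\PP^1$ and $\lambda$ a nonconstant map of even degree; if needed, precompose with a double cover to make the degree even. Then Lemma \ref{lemma_smoothfamily} gives the family $S_t\to E_t$ over the open set $U$ where $\lambda(t)\notin\{0,1,2\}$. Note that, by Corollary \ref{cor_ell-ell-by-base-change}(2), each $S_t$ is elliptic-elliptic. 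Items (i) and (ii) are then exactly Proposition \ref{prop_trivialVHS}: the transcendental part is the constant sub-VHS $\Pi^*(T_X\otimes\Q)$, and $H^1(S_t)\cong H^1(E_t)$ varies because $E_t$ is branched over $\{0,1,2,\lambda(t)\}$ with $\lambda(t)$ moving.

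For (iv), I will read off the singular fibers from Table \ref{table_star}. Over the branch points $0,1,2$ the fibers $II^*,IV^*,I_0^*$ become $IV^*,IV,I_0$, so the last one is smooth. Since $X$ has no other singular fibers (their Euler numbers $10+8+6=24$ already exhaust $e(X)$), and the fourth branch point lies under a smooth fiber, $S_t\to E_t$ has exactly one $IV^*$ and one $IV$ fiber. As a check, $e=8+4=12$, matching $\deg\mathbb L_{S_t}=1$.

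For (iii), I use Lemma \ref{lemma_TXpullback}: $T_{S_t}\otimes\Q\cong T_X\otimes\Q$ has rank $2$, since $\rho(X)=20$. Hence $\rho(S_t)=b_2(S_t)-2$. Noether's formula gives $e=12$, and with $b_1=2$ this yields $b_2=12-2+2b_1=14$, so $\rho=12=h^{1,1}$.

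Finally, (v) follows from the Shioda--Tate formula $\rho=2+\sum_v(m_v-1)+\operatorname{rank}MW$. The $IV^*$ fiber contributes $6$ and the $IV$ fiber contributes $2$, so $12=2+8+\operatorname{rank}MW$ and the Mordell--Weil rank is $2$.

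The only delicate point is checking that no other singular fibers appear and that the branch data produce a genus-one $E_t$. Both follow from the fibration having exactly three singular fibers and from $U$ excluding $\lambda(t)\in\{0,1,2\}$.
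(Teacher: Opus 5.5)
Your proposal is correct and follows the paper's proof essentially step for step: (i)--(ii) from Proposition \ref{prop_trivialVHS}, (iii) from the rank-two transcendental lattice via Lemma \ref{lemma_TXpullback}, (iv) from Table \ref{table_star}, and (v) from Shioda--Tate with fiber contributions $6+2$. Your additional checks fill in details the paper leaves implicit: that $10+8+6=24$ rules out any further singular fibers, and the explicit computation $b_2=14$.
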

\begin{proof}
(i) and (ii) follow from Proposition \ref{prop_trivialVHS}. (iii) follows from Lemma \ref{lemma_TXpullback}; since $T_{S_t}$ has rank 2, $\rho=h^{1,1}=12$.  (iv) follows from Table \ref{table_star} showing that branched double covers of $II^*,IV^*,I_0^*$ become $IV^*,IV,I_0$, respectively. (v) follows from the Shioda--Tate formula \cite[Corollary 6.7]{SchShi19}; $\rho=2+r+6+2$, so the Mordell--Weil rank $r=2$. 
\end{proof}

\subsection{A degeneration of elliptic-elliptic surfaces}\label{subsec_projective-family}
In this subsection, we compactify the smooth family from Lemma \ref{lemma_ell-ell-fibers}. Our goal is to show the following

\begin{proposition}\label{prop_key}
   There exists a smooth projective 3-fold $\mathcal{S}$ and a proper flat family
  \begin{equation}\label{eqn_global-family}
    g:\cS\to \mathbb P^1   
    \end{equation}  which satisfies the following conditions:
    \begin{itemize}
        \item When $t\notin \{t_0,\ldots, t_5\}$, $S_t:=g^{-1}(t)$ recovers the family from Lemma \ref{lemma_ell-ell-fibers}.
        \item When $t=t_i$, the fiber $g^{-1}(t)$ is simple normal crossings (SNC), with a component that is reduced and birational to a K3 surface.
    \end{itemize}
\end{proposition}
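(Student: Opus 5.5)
We build the family as a global base change with $\lambda$ chosen of degree $2$; the six points $t_0,\ldots,t_5$ are then the preimages $\lambda^{-1}\{0,1,2\}$. Take $C=\PP^1$ and $\lambda:\PP^1\to\PP^1$ of degree two, unramified over $0,1,2$, so that $\lambda^{-1}\{0,1,2\}=\{t_0,\ldots,t_5\}$. Form, as in Lemma \ref{lemma_smoothfamily}, the double cover $\mathcal E\to \PP^1\times\PP^1$ branched along $D=\Gamma_\lambda+\sum_{j=0}^2 \PP^1\times\{j\}$. Then take the normalization of $\mathcal E\times_{\PP^1}X$, with $X=\Kum(E_\omega\times E_\omega)$ fibered as in \eqref{eqn_Kum(E0*E0)}. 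Over $U$ this recovers the family of Lemma \ref{lemma_ell-ell-fibers} after the relative contraction $\Sigma$. We will also remove $\lambda^{-1}(\infty)$ if needed, or arrange that $X$ has no further singular fibers, which holds since the fibration has only $II^*,IV^*,I_0^*$. Over $t_i$ the branch divisor acquires a tacnode-type singularity, since $\Gamma_\lambda$ meets $\PP^1\times\{j\}$ transversally at $(t_i,j)$. So $\mathcal E$ has an $A_1$ singularity there, and its fiber over $t_i$ is a nodal curve: two copies of $\PP^1$ or a rational nodal curve, double covering $\PP^1$ branched at the two remaining points.

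The next step is local analysis near each $t_i$. Away from the fiber of $X$ over $\lambda(t_i)\in\{0,1,2\}$, the central fiber of $\mathcal E\times_{\PP^1}X$ is an elliptic fibration over the degenerate curve $E_{t_i}$. It is branched only at two star fibers, because the moving branch point has collided with the third. Taking the normalized double cover of $X$ branched at those two star fibers gives a component birational to a K3 surface, by Corollary \ref{cor_ell-ell-by-base-change}(1) with $\delta=2$. It is reduced, because the total space is generically reduced along it. The collided star fiber yields a local threefold singularity: locally it is the double cover $w^2=(x-s)x$ (with $s$ the base parameter) pulled back over the Kodaira model of a star fiber, twisted along the odd components. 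I will resolve it explicitly. First blow up the $A_1$ point of $\mathcal E$, or equivalently perform a base change $s=u^2$ if necessary. Then resolve the resulting singularities along the star fiber components, using toric or weighted blowups on the curves of the star fiber.

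To reach an SNC central fiber, apply semistable reduction \cite{KKMS}-style in the global setting. The base is already $\PP^1$, and we may precompose $\lambda$ with a further cyclic cover branched at the $t_i$, which keeps $\mathcal S$ projective and leaves the smooth part a family of the same type. Then take a toroidal resolution. The reduced K3-birational component survives semistable reduction as a reduced component, because it already appears with multiplicity one. Semistable reduction may introduce extra components, but the second bullet of Proposition \ref{prop_key} only needs one such component. Projectivity and smoothness of $\mathcal S$ follow since all operations are projective blowups and normalizations of projective varieties, and flatness holds because the base is a smooth curve and $\mathcal S$ is integral.

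I expect the main obstacle to be the explicit local resolution at the collision of the moving branch point with a star fiber, especially the $II^*$ case. The task is to certify that the result is genuinely SNC, while keeping control of which component carries the K3. I would first handle it by base change $s=u^2$, which turns the local picture into two branch points merging, followed by normalization along each odd-multiplicity component from Lemma \ref{lemma_star-1comp}. Only if that fails would I fall back on abstract semistable reduction, accepting less explicit control of the other components.
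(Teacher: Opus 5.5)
Your global setup matches the paper's: the double cover $\mathcal{E}$ of $\mathbb{P}^1_t\times\mathbb{P}^1_x$ branched along $\Gamma_\lambda$ and the three horizontal lines, the normalized fibered product $\mathcal{S}'$, and the reduced main component over $t_i$, birational to the K3 surface obtained by branching at the two remaining star fibers. There is, however, a genuine gap in getting from $\mathcal{S}'$ to $\mathcal{S}$.

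The contraction $\Sigma$ of the twelve ruled divisors $R_i$ is only constructed over $U$. Anything obtained by resolving $\mathcal{S}'$ therefore has non-minimal fibers over $U$ (the surfaces $S'_t$, blown up at twelve points), so it fails the first bullet. You never say how $\Sigma$ extends across the $t_i$. If you instead take a smooth completion of $\mathcal{S}_U$, it is related to $\mathcal{S}'$ only by a birational map, and that map can alter the singular fibers. Your remark that the K3 component ``already appears with multiplicity one'' controls base change, but not this map: nothing in your proposal prevents the K3 component from being contracted.

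The missing idea, as in Lemma \ref{lemma:red}, is to resolve the indeterminacy of $\mathcal{S}'\dashrightarrow\mathcal{S}$ by $\eta:\widehat{\mathcal{S}'}\to\mathcal{S}$. The strict transform of the K3 component is not uniruled. Every $\eta$-exceptional divisor is uniruled because $\mathcal{S}$ is smooth \cite[Prop.~1.3]{KollarMori98}. Hence the K3 component maps birationally onto a component of $g^{-1}(t_i)$. That component is still reduced, because $\eta$ is an isomorphism at its generic point ($\mathcal{S}$ is normal).

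Your route to SNC fibers also conflicts with the statement. By Riemann--Hurwitz, any cyclic cover of $\mathbb{P}^1$ branched at the six points $t_0,\dots,t_5$ has genus at least $2$. After such a cover the family no longer lives over $\mathbb{P}^1$, and its smooth fibers are no longer the family over $U\subset\mathbb{P}^1$.

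This step is also unnecessary. The proposition asks only that $g^{-1}(t_i)$ be an SNC divisor with \emph{one} reduced component, not that it be semistable; multiplicities are allowed (cf.\ the lcm in the proof of Lemma \ref{lemma_wrongway}). So neither the base change nor the explicit local resolution you flagged as the main obstacle is needed. Take any flat projective completion $\overline{\mathcal{S}}$ of $\mathcal{S}_U$, and a Hironaka resolution $\mathcal{S}\to\overline{\mathcal{S}}$ that is an isomorphism over $U$ and makes the fibers over the $t_i$ SNC. Then apply the argument above.

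There are also some smaller slips:
\begin{itemize}
\item A transverse intersection of $\Gamma_\lambda$ with $\mathbb{P}^1\times\{j\}$ is a node, not a tacnode; a tacnode would give $A_3$, not $A_1$.
\item Reducedness of the main component comes from the $I_2$ fiber of $\mathcal{E}$ being reduced and $X\to\mathbb{P}^1_x$ being smooth away from the star fibers. It does not follow from the total space being reduced.
\item Blowing up the $A_1$ point is not equivalent to the base change $s=u^2$.
\end{itemize}
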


We begin by compactifying the base family of elliptic curves to an elliptic surface $\E$. There are many ways of doing this, using the general setup of Lemma \ref{lemma_smoothfamily}, but for the sake of concreteness we give the simplest example here. Our elliptic surface $\mathcal E$ will be the minimal resolution of a certain double cover of $\PP^1\times \PP^1$. In order to distinguish the two factors, we will denote the first by $\PP^1_t$ and the second by $\PP^1_x$.
\begin{lemma}\label{lemma_RES}
     There is a rational elliptic surface $f:\E\to \mathbb P^1_t$ with six $I_2$ fibers that admits a map $h:\E\to \mathbb P_x^1$, such that the restriction to the general fiber of $f$ 
     is a double cover $h|_{E_t}:E_t\to \mathbb P^1_x$ branched at four points, including three fixed points $x=0,1,2$.
 \end{lemma}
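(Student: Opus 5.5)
Take $\lambda:\PP^1_t\to\PP^1_x$ to be the identity, so the fourth branch point is $x=t$. The surface will be a double cover of $\PP^1_t\times\PP^1_x$ branched along the divisor
$$D=\{x=0\}+\{x=1\}+\{x=2\}+\{x=t\},$$
that is, the diagonal together with three horizontal lines. This divisor has bidegree $(1,4)$ and is not $2$-divisible, since its degree in the $t$-direction is odd. To fix this, I will add a fiber $\{t=\infty\}$ (or take $\lambda$ of degree $2$). The cleanest way to write it is the affine model
$$y^2=x(x-1)(x-2)(x-t),$$
homogenized on $\PP^1_t\times\PP^1_x$ with the branch curve of bidegree $(2,4)$:
$$B=\{x=0\}+\{x=1\}+\{x=2\}+\Gamma+\{t=\infty\},$$
where $\Gamma$ is the closure of the diagonal. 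A bidegree $(2,4)$ branch curve gives a double cover $W\to\PP^1\times\PP^1$ with $K_W$ pulled back from $\mathcal O(-2,-2)\otimes\mathcal O(1,2)=\mathcal O(-1,0)$. Its resolution is therefore a rational elliptic surface over $\PP^1_t$, as long as the singularities of $B$ are only nodes, since nodes contribute $A_1$ points and do not change $K$.

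The singularities of $B$ are as follows. The diagonal meets the three lines $x=0,1,2$ transversally at the points $(0,0)$, $(1,1)$ and $(2,2)$. The line $t=\infty$ meets the three lines $x=0,1,2$ transversally, and it meets $\Gamma$ at $(\infty,\infty)$. All of these are ordinary nodes, for six nodes in total. Here I have to check that $\Gamma$ and $\{t=\infty\}$ really are transverse at $(\infty,\infty)$. In local coordinates $s=1/t$, $u=1/x$, the diagonal is $u=s$, which is transverse to $s=0$. So this point is a node too.

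The double cover therefore has six $A_1$ singularities, lying over the six points $t=0,1,2,\infty,\infty,\infty$. These coincide over $t=\infty$, which contains four of the nodes, so the fibers must be examined more carefully.

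I will examine the fiber over $t=\infty$ first, since I expect it to be the main obstacle. There the double cover is branched along the entire line, so the fiber is a double fiber, and the resolution goes wrong. I will instead choose $\lambda$ of degree $2$ so that the branch curve avoids containing a vertical fiber. The graph $\Gamma_\lambda$ has bidegree $(2,1)$, and together with the three horizontal lines the total bidegree is $(2,4)$, which is $2$-divisible. Its nodes are the points of $\Gamma_\lambda\cap\{x=j\}$, giving $2$ points for each $j=0,1,2$ and $6$ nodes in total, provided none of $0,1,2$ is a critical value of $\lambda$. The six nodes lie over six distinct values $t_0,\dots,t_5=\lambda^{-1}\{0,1,2\}$, which is exactly what Proposition~\ref{prop_key} suggests.

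Resolving the six $A_1$ points gives a smooth surface $\E$ with $K_\E=-F$. Via $\mathcal O(-1,0)$ and the projection to $\PP^1_t$, $\E$ is a rational elliptic surface with fibration $f$. The general fiber $E_t$ is the double cover of $\PP^1_x$ branched at $0,1,2,\lambda(t)$, and $h$ is the composite of the resolution with the projection to $\PP^1_x$. Over $t_i$ two branch points collide in a node. The fiber there is the rational curve double covering $\PP^1_x$ branched at two points, together with the exceptional $(-2)$-curve of the $A_1$ point, meeting it in two points. This is an $I_2$ fiber. Euler characteristic is a good check: $6\cdot 2=12=e(\E)$, so there are no other singular fibers. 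The remaining points to verify are that $\E\to\PP^1_t$ has a section and that it is relatively minimal; both hold because $K_\E=-F$ and there is a $(-1)$-section coming from a branch line $x=0$.
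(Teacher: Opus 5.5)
Your final construction (the double cover of $\PP^1_t\times\PP^1_x$ branched along the lines $x=0,1,2$ and the graph of a general degree-$2$ map $\lambda$, whose six nodes give six $A_1$ points resolving to the $I_2$ fibers) is essentially the paper's proof, and your extra checks are correct: $K_\E=-F$ by crepancy, the $(-1)$-section from the ramification curve over $x=0$, and relative minimality. Two small tidy-ups: justify $e(\E)=12$ directly as $2\cdot 4-e(B)+6=8-2+6$ (or get rationality from the conic bundle $\E\to\PP^1_x$, as the paper does) rather than presupposing it, and delete the abandoned $\lambda=\mathrm{id}$ attempt, which in fact has seven nodes, not six.
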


\begin{proof}
The elliptic surface $\E$ is the minimal resolution of the double cover of $\PP_t^1\times \PP^1_x$ branched along a divisor $D$ of bidegree $(4,2)$, pictured below. The divisor $D$ is the sum of 3 horizontal lines with the graph of a general double cover $\lambda:\mathbb P^1_t\to\mathbb P^1_x$. The first projection $f:\E\to \mathbb P^1_t$ defines an elliptic fibration surface with fiber $E_t= f^{-1}(t)$. The second projection is a conic bundle. When $t$ is general, under the projection $\E\to \PP^1_x$, $E_t$ can be realized as a double cover of $\PP^1_x$ branched at $\{0,1,2,\lambda(t)\}$.
The Kummer K3 surface $X$ has a special elliptic fibration $X \to \PP^1_x$ with three star fibers situated over $\{0,1,2\}\in \PP^1_x$, as indicated in the figure.



\begin{figure}[h]
\centering
\includegraphics[width=0.6\textwidth]{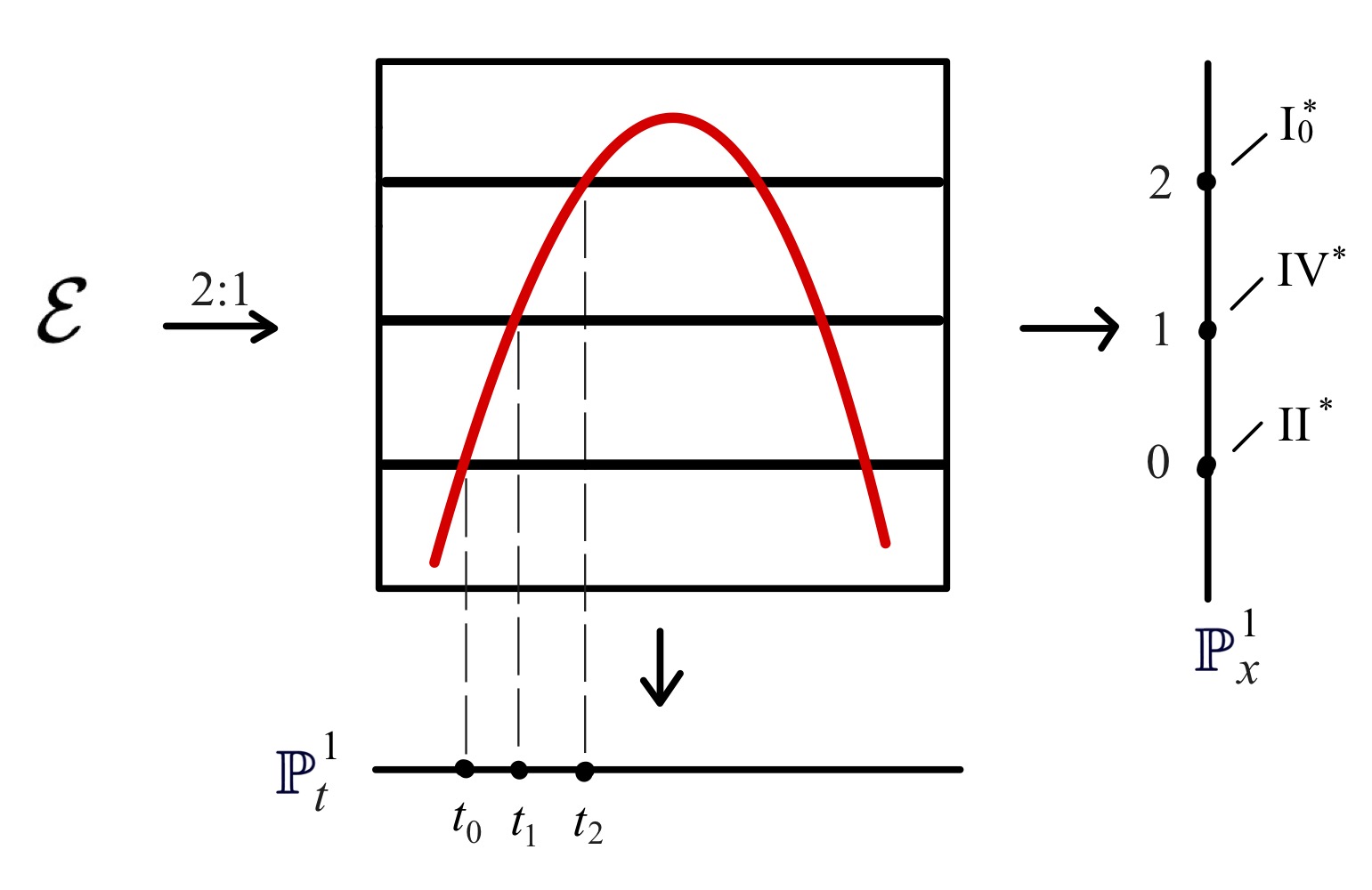}
\caption{Rational elliptic surface with its two projections}
\label{figure_24curve}
\end{figure}

 By choosing $\lambda$ general, the six singular points of $D$ are nodes. Accordingly, the branched double cover will have an $A_1$-singularity over each singular point of $D$, so its desingularization $\mathcal E$ gives an elliptic surface $\E\to \mathbb P^1_t$ with six $I_2$ fibers. Each $I_2$ fiber consists of a main component, the proper transform of the nodal double cover of $\PP^1_x$, together with an exceptional component from the blow up of the $A_1$-singularity. On the main component, $h$ gives a double cover $\PP^1 \to \PP^1_x$ branched at the two values in $\{0,1,2\}$ not equal to $\lambda(t_i)$.
\end{proof}
These singular fibers of $f:\mathcal E \to \PP^1_t$ come in pairs according to the value of $\lambda(t)\in \mathbb P^1_x$. 
We label these values of $t$ as follows:
\begin{align*}
\lambda^{-1}(0)&=\{t_0, t_3\},\\
\lambda^{-1}(1)&=\{t_1, t_4\},\\
\lambda^{-1}(2)&=\{t_2, t_5\}.
\end{align*}
The fibered product $\mathcal E\times_{\PP^1_x} X$ has normalization $\mathcal S'$ admitting a proper morphism,
$$\mathcal S' \to \PP^1_t,$$
This is a family of elliptic-elliptic surfaces which is smooth over $U=\PP^1\smallsetminus \{t_0,t_1,t_2,t_3,t_4,t_5\}$. We showed in Lemma \ref{lemma_smoothfamily} that $\mathcal S'_U$ admits a contraction $\mathcal S'_U \to \mathcal S_U$ which restricts to the minimal model contraction on each smooth surface fiber.

Let $\overline{\mathcal S}\to \PP^1$ denote any flat projective family of surfaces extending $\mathcal S_U$, and let $\mathcal S\to \overline{\mathcal S}$ be a resolution of singularities of the total space such that the fibers have simple normal crossings. We can also arrange that the fibers over each pair $t_0, t_3$ etc. are isomorphic.



\begin{lemma}\label{lemma:red}
    Each singular fiber of $\mathcal S' \to \PP^1$ and of $g:\mathcal S\to \PP^1$ contains a unique reduced component birational to a K3 surface.
\end{lemma}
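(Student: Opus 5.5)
We first analyze the singular fibers of $\mathcal S'\to\PP^1_t$ directly, then transfer the conclusion to $\mathcal S$ by birational invariance. Fix $t_i$, say with $\lambda(t_i)=0$ (the other cases are symmetric). By Lemma \ref{lemma_RES}, the fiber $E_{t_i}$ of $\E$ is an $I_2$ fiber $M\cup R$. Here $M$ is the main component, mapping $2:1$ onto $\PP^1_x$ and branched only at the two values of $\{0,1,2\}$ other than $0$. The component $R$ is the exceptional curve over the node of $D$, and $h$ contracts it to the point $x=0$.

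The fiber of $\E\times_{\PP^1_x}X$ over $t_i$ is then $(M\times_{\PP^1_x}X)\cup(R\times F_0)$, where $F_0$ is the star fiber of $X$ over $0$. The second piece lies over a curve times a fiber. After normalization it contributes only components that are ruled, or birational to $R\times(\text{component of }F_0)$, so none of them is birational to a K3 surface.

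The first piece is exactly the quadratic base change of $X\to\PP^1_x$ branched over the two star fibers at $\{1,2\}\setminus\{\lambda(t_i)\}$. In the case at hand this is $\{1,2\}$, lying under $IV^*$ and $I_0^*$. By Corollary \ref{cor_ell-ell-by-base-change}(1), its normalization is birational to an elliptic K3 surface. When $\lambda(t_i)=2$ the branch points lie under $II^*,IV^*$, which is the Shioda--Inose cover $Y$ of Lemma \ref{lemma_ShiodaInose}; when $\lambda(t_i)=1$ they lie under $II^*,I_0^*$. Since $M$ appears with multiplicity one in the $I_2$ fiber and $X\to\PP^1_x$ is generically reduced, this component has multiplicity one in the fiber of $\mathcal S'$. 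The normalization is a local operation, so the reducedness survives. Uniqueness follows because every other component either comes from $R\times F_0$ or lies over a curve and is ruled. The one possible exception is components of $M\times_{\PP^1_x}X$ lying over points of $M$. These are components of star fibers, which are rational curves times points, and so they are not K3.

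To pass to $g:\mathcal S\to\PP^1$, note that $\mathcal S$ and $\mathcal S'$ are birational over $\PP^1_t$, since both extend $\mathcal S_U$. Take a common resolution. The discrete valuation on the function field defined by a reduced fiber component of multiplicity one with non-uniruled (K3) residue field is intrinsic. Indeed, the birational class of a fiber component that is not uniruled is detected by such divisorial valuations, and for any two models of the generic fiber, exceptional divisors over the total space are uniruled.

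So the K3-birational component of $\mathcal S'_{t_i}$ appears as a component of $\mathcal S_{t_i}$ with the same multiplicity, namely one. Every other component of $\mathcal S_{t_i}$ is either exceptional, hence uniruled, or corresponds to a uniruled component of $\mathcal S'_{t_i}$. Hence uniqueness and reducedness transfer.

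\textbf{Main obstacle.} The hardest step is this transfer argument, in particular justifying that the multiplicity and birational type of a non-uniruled fiber component are invariants of the valuation, independent of the chosen semistable model. I would first try Abhyankar/Zariski: a center of a divisorial valuation on a smooth total space, together with the fact that blowing up smooth centers in the total space produces only ruled exceptional divisors. I would check the multiplicity-one claim via the local equation $t-t_i$ vanishing to order one along the strict transform.
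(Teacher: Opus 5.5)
Your proposal is correct and is essentially the paper's proof. You identify the fiber of $\mathcal E\times_{\PP^1_x}X$ over $t_i$ as the reduced base-change component $M\times_{\PP^1_x}X$ (birational to a K3 by Corollary~\ref{cor_ell-ell-by-base-change}(1)) plus ruled pieces $R\times(\text{star fiber})$, then pass to $\mathcal S$ through a common resolution, using that a non-uniruled divisor cannot be exceptional over the smooth threefold $\mathcal S$ (the paper cites \cite[Prop.~1.3]{KollarMori98}); multiplicity one is preserved because it is $\mathrm{ord}(t-t_i)$ of the same valuation.

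Two small corrections, neither affecting the lemma. First, the Shioda--Inose cover of Lemma~\ref{lemma_ShiodaInose} is the case $\lambda(t_i)=0$ (branched under $IV^*,I_0^*$); $\lambda(t_i)=2$ gives $Y_2$, of discriminant $48$. Second, exceptional divisors are uniruled only over smooth or mildly singular models, not over arbitrary ones. So for uniqueness you should add that the normal threefold $\mathcal S'$ has only toroidal singularities here (both maps to $\PP^1_x$ are locally monomial), or instead bound the number of $p_g=1$ components as in Lemma~\ref{lemma_wrongway}.
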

 
\begin{proof}  The singular fibers of $\mathcal S' \to \PP^1$ over $\{t_i\}_{i=0,\dots,5}$ are readily described. First, observe that the fibers of $\mathcal E \to \PP^1$ are $I_2$ configurations, featuring two reduced components isomorphic to $\PP^1$ meeting at two nodes. The fiber of
$$\mathcal E \times_{\PP^1_x} X \to \PP^1_t$$
over $t_i$ is the union of $Z_{1,i}$ and $Z_{2,i}$. The (main) component $Z_{1,i}$ is reduced and birational to the base change of $X \to \PP^1_x$ branched at the two (other) star fibers. On the other hand, $Z_{2,i}$ is described as follows:
    $$Z_{2,0}\cong Z_{2,3}\cong II^*\times \mathbb P^1,\ Z_{2,1}\cong Z_{2,4}\cong IV^*\times \mathbb P^1,\ Z_{2,2}\cong Z_{2,5}\cong I_0^*\times \mathbb P^1.$$
Each $Z_{2,i}$ contains nonreduced components due to the nonreduced curves in the star fibers.
After normalization, the main component $Z_{1,i}^\nu$ is a K3 surface blown up at 8 points, which proves the first statement. For the second statement, recall from Lemma \ref{lemma_smoothfamily} that we have a {\it birational} map
$\mathcal S' \dasharrow \mathcal S$
with indeterminacy locus supported in the central fiber. Resolving this indeterminacy to produce a regular map,
$$\xymatrix{
\widehat{\mathcal S'} \ar[r]^\eta \ar[d] & \mathcal S \\
\mathcal S' \ar@{..>}[ur] & 
}$$
the blow up $\widehat{\mathcal S'}$ still contains a component birational to a K3 in each singular fiber. These components are non-uniruled divisors, and $\mathcal S$ is smooth by assumption, so no such component is contracted by $\eta$, using \cite[Prop. 1.3]{KollarMori98}.
\end{proof}

\subsection{The K3 components}\label{Section_SIcousins}
In this subsection, we will describe the geometry of the three K3 surfaces whose birational models occur in Proposition \ref{prop_key}.

\begin{notation}\normalfont
    Let $Y_0$ (respectively, $Y_1$, and $Y_2$) be the K3 surfaces from Proposition \ref{prop_key}. They are the minimal models of the main components of the fiber over $t_0,t_1,t_2$ of $\mathcal S \to \PP^1$. (Note the singular fibers over $t_3,t_4,t_5$ are identical to $t_0,t_1,t_2$, respectively.)
\end{notation}
Each $Y_i$ is obtained by taking the relatively minimal regular model of the normalized base change $\mathbb P^1\times_{\mathbb P^1_x}X$, where $\mathbb P^1\to \mathbb P^1_x$ is the double cover branched at $x=1,2$ (respectively, $x=0,2$, and $x=0,1$). 




\subsubsection{Analysis of $Y_0$: }
By construction, the elliptic K3
$$Y_0\to \mathbb P^1$$
arises from a quadratic base change of \eqref{eqn_Kum(E0*E0)} branched at $IV^*$ and $I_0^*$. Hence by Table 2, it has singular fibers of types $II^*, II^*, IV$. Lemma \ref{lemma_ShiodaInose} implies that $T_{Y_0}$ has intersection pairing 
$T_{X}=A_2$, which  has discriminant $3$.
Theorem \ref{thm_singularK3} guarantees that $Y_0$ is the K3 surface arising from the minimal desingularization of $(E_\omega\times E_\omega)/\mu_3$, acting anti-diagonally. This is one of Vinberg's ``most algebraic'' K3 surfaces \cite{Vin83}.

\subsubsection{Analysis of $Y_1$:} 
The elliptic K3 
$$Y_1\to \mathbb P^1$$
arises from a quadratic base change of \eqref{eqn_Kum(E0*E0)} branched at $II^*,I_0^*$ fibers, hence it has three singular fibers of type $IV^*$. The Shioda--Tate formula implies the elliptic surface is extremal. By the classification theorem \cite[Table 2, No. 219]{Shimada-Zhang01}, the Mordell--Weil group is $MW(Y_1/\mathbb P^1)\cong \Z/3\Z$. Moreover,  the transcendental lattice of $Y_1$ is isometric to  $T_{Y_0}$. In particular

\begin{proposition}\label{prop_disc3}
There is an isomorphism $Y_0\cong Y_1$, and $\disc(Y_0)=\disc(Y_1)=3$.
\end{proposition}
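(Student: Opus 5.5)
The plan is to show that $T_{Y_1}\cong T_{Y_0}$ as lattices, and that this lattice is $\mathrm A_2$ of discriminant $3$. Theorem \ref{thm_singularK3} then gives $Y_0\cong Y_1$, since a K3 surface of Picard number $20$ is determined up to isomorphism by its transcendental lattice.

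\textbf{Step 1: compute $\disc(Y_0)$.} The elliptic fibration $X\to\PP^1_x$ on $X=\Kum(E_\omega\times E_\omega)$ is of type (i) in Lemma \ref{lemma_ShiodaInose}. The surface $Y_0$ is its base change branched at the two star fibers other than $II^*$, namely $IV^*$ and $I_0^*$. Lemma \ref{lemma_ShiodaInose} therefore gives an isometry $T_{Y_0}(2)\cong T_X=\mathrm A_2(2)$. Hence $T_{Y_0}\cong \mathrm A_2$, and $\disc(Y_0)=3$.

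\textbf{Step 2: compute $\disc(Y_1)$ from its fibration.} The surface $Y_1$ has the elliptic fibration with three $IV^*$ fibers. Its trivial lattice is $U\oplus \mathrm E_6^{3}$, of rank $2+18=20$. This equals $\rho(Y_1)$, since $\rho(Y_1)=\rho(X)=20$ by the argument of Lemma \ref{lemma_TXpullback}. So the fibration is extremal and the Mordell--Weil rank is $0$. I would quote $MW(Y_1/\PP^1)\cong\Z/3\Z$ from \cite{Shimada-Zhang01}; as an independent check, the components of odd multiplicity in the three $IV^*$ fibers admit $3$-torsion sections. The standard formula for elliptic surfaces with finite Mordell--Weil group then gives
$$\disc NS(Y_1)=\frac{\disc(U\oplus \mathrm E_6^3)}{|MW_{tors}|^2}=\frac{3^3}{9}=3.$$
Thus $\disc(T_{Y_1})=3$.

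\textbf{Step 3: identify the lattice.} $T_{Y_1}$ is an even positive definite binary form of discriminant $4ac-b^2=3$. The only reduced such form is $\begin{bmatrix}2&1\\1&2\end{bmatrix}=\mathrm A_2$, so $T_{Y_1}\cong \mathrm A_2\cong T_{Y_0}$. A lattice isometry alone is enough here: for a binary positive definite lattice, the Hodge structure is determined up to complex conjugation, and a conjugate Hodge structure is realized by an orientation-reversing isometry. Theorem \ref{thm_singularK3} then yields $Y_0\cong Y_1$, and $\disc(Y_0)=\disc(Y_1)=3$.

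The main obstacle is pinning down the torsion of the Mordell--Weil group in Step 2. I would rely on the cited classification entry, and justify the existence of $3$-torsion via the $\mu_3$-symmetry inherited from $E_\omega$.
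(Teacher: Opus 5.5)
Your proposal is correct and follows essentially the same route as the paper. The Shioda--Inose lemma gives $T_{Y_0}\cong \mathrm A_2$; the Shimada--Zhang classification of the extremal $3\times IV^*$ fibration, with $MW\cong\Z/3\Z$, determines $T_{Y_1}$; and Theorem~\ref{thm_singularK3} then gives $Y_0\cong Y_1$.

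The only difference is that you derive $\disc(T_{Y_1})=27/9=3$ from the discriminant formula and the uniqueness of the even binary form of discriminant $3$, whereas the paper reads $T_{Y_1}$ directly off the table. Your orientation remark is sound in this case because $\mathrm A_2$ does admit an orientation-reversing isometry, for example a root reflection; that does not hold for every binary lattice, so the general phrasing should be tied to $\mathrm A_2$.
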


\subsubsection{Analysis of $Y_2$:}
The elliptic K3
\begin{equation}\label{eqn_Y2->P^1}
    Y_2\to \mathbb P^1
\end{equation}
arises from base change of \eqref{eqn_Kum(E0*E0)} branched at $II^*$ and $IV^*$ fibers. Hence it has singular fibers $I_0^*,I_0^*, IV,IV^*$. Our claim is that


\begin{proposition}\label{prop_Y3isnotY2Y1}
    $Y_2\ncong Y_1\cong Y_0$. 
\end{proposition}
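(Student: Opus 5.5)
The approach is to compute discriminants. By Proposition \ref{prop_disc3}, $\disc(Y_0)=\disc(Y_1)=3$. I will show $\disc(Y_2)\neq 3$; the introduction suggests the value is $48$. Since the transcendental lattice is birationally and isomorphism invariant, different discriminants rule out $Y_2\cong Y_1$.

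The first step is to confirm that $Y_2$ has maximal Picard number and to read off the trivial lattice from the fibration \eqref{eqn_Y2->P^1}. Its singular fibers are $I_0^*,I_0^*,IV,IV^*$, contributing root lattices $D_4,D_4,A_2,E_6$ of total rank $4+4+2+6=16$. The Shioda--Tate formula then gives $\rho(Y_2)=2+16+r=18+r$, where $r$ is the Mordell--Weil rank. On the other hand, $Y_2$ is a rational double cover of $X=\Kum(E_\omega\times E_\omega)$, so $T_{Y_2}\otimes\bbQ\cong T_X\otimes\bbQ$ has rank $2$ (the argument of Lemma \ref{lemma_TXpullback} applies verbatim with $p_g=1$ on both sides). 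Hence $\rho=20$ and $r=2$.

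The main obstacle is computing $\disc(NS(Y_2))$ when the Mordell--Weil rank is positive. The trivial lattice $U\oplus D_4^2\oplus A_2\oplus E_6$ has discriminant $4\cdot4\cdot3\cdot3=144$. The standard formula gives
\[
\disc NS(Y_2)=\frac{144\cdot\det MWL}{|MW_{tors}|^2},
\]
so I need the height pairing on $MW$. I would proceed in two ways.

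The first is to find explicit sections. I would write a Weierstrass model: the Kummer fibration has $j=0$ type fibers (since $II^*$, $IV^*$ are $j=0$, and for $E_\omega$ the $I_0^*$ fiber also has $j=0$), so the fibration is isotrivial, of the form $y^2=x^3+g(s)$. After base change, $Y_2$ is $y^2=x^3+c\,u^2(u-1)^4 \cdot(\ldots)$, a sextic-twist isotrivial surface. Sections can then be found through the $\mu_3$-action $x\mapsto\omega x$, which makes $MWL$ a rank-2 lattice with $\Z[\omega]$-structure, hence a scaled $A_2$ with determinant $3h^2/4$ for the height $h$ of one section. Computing $h$ via the height formula with local corrections at $D_4$, $A_2$, $E_6$ then determines the discriminant.

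The second way, used as a cross-check, is to apply Morrison's result (Remark \ref{remark_Morrison}): $\pi_*\colon T_{Y_2}(2)\hookrightarrow T_X=A_2(2)$. This forces $T_{Y_2}\otimes\bbQ$ to be similar to $A_2$, so $T_{Y_2}$ is a sublattice of $A_2$ of some index $k$ and $\disc(Y_2)=3k^2$. It then suffices to show $k\neq1$. If $k=1$, Theorem \ref{thm_singularK3} would give $Y_2\cong Y_0$, and then $Y_0$ would carry a jacobian fibration with fibers $I_0^*,I_0^*,IV,IV^*$. I would exclude this with the Kneser--Nishiyama method: fibrations on the K3 with $T=A_2$ correspond to embeddings into Niemeier lattices of the partner lattice $E_6$, and one checks that none has orthogonal root complement $D_4^2A_2E_6$ together with rank-2 Mordell--Weil. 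This list is explicit from Nishiyama's classification for $T=A_2$. Either route yields $\disc(Y_2)\neq3$, which proves the proposition.
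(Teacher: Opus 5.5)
Your proof is correct, but the step that actually closes it is your ``cross-check'', and that is essentially the paper's proof. The paper just invokes the classification of jacobian elliptic fibrations on Vinberg's surface $Y_0$ (Vinberg, Nishiyama) and notes that the configuration $I_0^*,I_0^*,IV,IV^*$ does not occur. The Morrison/index-$k$ layer you put in front of it is superfluous: if $Y_2\cong Y_0$, the fibration \eqref{eqn_Y2->P^1} transports directly to $Y_0$, so there is no need to pass through $\disc(Y_2)=3k^2$.

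Your primary, explicit route has a gap as written. The formula $\det\mathrm{MWL}=\tfrac34h^2$ holds only when $h$ is the height of a $\Z[\omega]$-generator of $\mathrm{MW}$. A section you happen to find could be, say, $(1-\omega)Q$, of height $3h(Q)$, so its height only gives the determinant of a finite-index sublattice. You also need $\mathrm{MW}_{\mathrm{tors}}=0$ to use the discriminant formula as you wrote it. This is true, e.g.\ because torsion injects into the component groups $\Z/3$ and $(\Z/2)^2$ of the additive fibers $IV$ and $I_0^*$.

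Neither point requires actually finding sections. The local corrections $1,\tfrac23,\tfrac43$ put every height in $\tfrac13\Z$. Hence $\det\mathrm{MWL}=\tfrac34h_{\min}^2$ has the form $\tfrac{m^2}{12}$ with $m\in\Z$. This can never equal the value $\tfrac1{48}$ forced by $\disc(Y_2)=3$.

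That is a variant of the paper's alternative argument in the remark after Proposition \ref{prop_disc(T_2)}. There the paper observes that the Gram entries of $\mathrm{MWL}$ lie in $\tfrac16\Z$, so $\det\mathrm{MWL}\in\tfrac1{36}\Z$, which excludes $\tfrac1{48}$. That version needs neither the fibration classification nor the $\Z[\omega]$-structure, so it is the cleanest self-contained proof.
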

\begin{proof}
   This follows from the fact that Vinberg's most algebraic K3 surface $Y_0$ does not have an elliptic fibration structure with singular fiber types
   $I_0^*,I_0^*, IV,IV^*$ (cf. \cite{Vin83, Nis96, BZ25}). 
\end{proof}

We have a more precise description of the K3 surface $Y_2$:

\begin{proposition}\label{prop_disc(T_2)}The transcendental lattice 
$T_{Y_2}$ has discriminant 
$$\disc(Y_2)=48.$$ Moreover, the elliptic fibration \eqref{eqn_Y2->P^1} has Mordell--Weil group $\Z^2$ and the Mordell--Weil lattice has discriminant 
$$\disc(\textup{MWL})=\frac13.$$
\end{proposition}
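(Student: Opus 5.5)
The plan is to compute $\disc(Y_2)$ in two independent ways. First, use the double cover structure to pin down $T_{Y_2}$ up to finitely many possibilities. Then use the Shioda--Tate formula with the fiber configuration $I_0^*,I_0^*,IV,IV^*$ to relate the discriminant of $NS(Y_2)$ to the Mordell--Weil lattice. The two computations together should force both stated values.

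\textbf{Rank and the constraint from Remark \ref{remark_Morrison}.} Since $Y_2$ is a rational double cover of $X=\Kum(E_\omega\times E_\omega)$, Lemma \ref{lemma_TXpullback} (which applies verbatim to this K3 case of Corollary \ref{cor_ell-ell-by-base-change}(1)) gives $\rho(Y_2)=20$, so $T_{Y_2}$ has rank 2. By Remark \ref{remark_Morrison}, $\pi^*:T_X(2)\hookrightarrow T_{Y_2}$ is an isometric embedding of finite index $m$. Since $T_X=A_2(2)$, the sublattice has discriminant $\disc(A_2(4))=48$, so $\disc(T_{Y_2})=48/m^2$. Thus $m\in\{1,2,4\}$ and $\disc(Y_2)\in\{48,12,3\}$. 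The value $3$ would force $T_{Y_2}\cong A_2$ (the unique even positive binary form of discriminant 3), and then $Y_2\cong Y_0$ by Theorem \ref{thm_singularK3}; this is excluded by Proposition \ref{prop_Y3isnotY2Y1}. The remaining possibilities are $12$ and $48$. The value $12$ arises from the even forms $\begin{bmatrix}2&0\\0&6\end{bmatrix}$ and $\begin{bmatrix}4&2\\2&4\end{bmatrix}$, the latter of which is $T_X$ itself.

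\textbf{Shioda--Tate.} The trivial lattice is $U\oplus D_4\oplus D_4\oplus A_2\oplus E_6$, of rank $2+4+4+2+6=18$. So the Mordell--Weil rank is $2$, and we will show the torsion is trivial: a torsion section injects into the discriminant groups of the fiber components, so its order divides $\gcd$ of $\{4,4,3,3\}$-related constraints. A $2$-torsion section must be compatible with the $IV$ and $IV^*$ fibers, whose component groups are $\Z/3$, and a $3$-torsion section must be compatible with the $D_4$ fibers, whose component groups are $(\Z/2)^2$. In both cases the torsion section would be forced to meet the identity component of some fibers, and the height formula $0=4+0-\sum\mathrm{contr}$ then fails. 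The formula $\disc(NS)=\disc(\text{triv})\cdot\disc(\mathrm{MWL})/|MW_{tors}|^2$ gives
\[
\disc(Y_2)=4\cdot4\cdot3\cdot3\cdot\disc(\mathrm{MWL})=144\,\disc(\mathrm{MWL}).
\]
Hence $\disc(\mathrm{MWL})=1/3$ is equivalent to $\disc(Y_2)=48$, whereas $\disc(Y_2)=12$ would give $\disc(\mathrm{MWL})=1/12$.

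\textbf{Main obstacle: excluding discriminant 12.} To rule this out, I would exhibit explicit sections. The first source is sections of $X\to\PP^1_x$, namely the Mordell--Weil group of the Shioda--Inose Kummer fibration, pulled back to $Y_2$; the second is sections produced by the deck involution. I would compute their heights via the formula $2\chi+2P\cdot O-\sum \mathrm{contr}_v$ with $\chi=2$, using the local contributions $1$ or $1$ (for $D_4$), $2/3$ (for $A_2$), and $4/3$ (for $E_6$). The goal is to obtain a rank-2 lattice of discriminant $1/3$, for instance with Gram matrix $\begin{bmatrix}2/3&1/3\\1/3&2/3\end{bmatrix}$. Since any finite-index overlattice changes the discriminant by a square factor, and $1/12$ is not $1/3$ divided by a square integer in a compatible way beyond the factor-$4$ index, I still need to show that index 2 is impossible. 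For this I would check that halving any combination of the sections yields a point whose height is incompatible with the allowed fiber contributions.

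Alternatively, one could decide between $12$ and $48$ by the parity or genus of the discriminant form. Using Nikulin's theory, $T_{Y_2}$ must contain $T_X(2)$ with the Nikulin-involution gluing structure from \cite{vGS07}. Because the Nikulin involution on $Y_2$ is not a Morrison--Nikulin (Shioda--Inose) involution, $T_{Y_2}$ is exactly $\pi^*T_X$, so $m=1$ and the discriminant is $48$.
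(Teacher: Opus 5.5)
\textbf{The gap is the exclusion of $\disc(Y_2)=12$.} Everything before it matches the paper's proof (indexing via $\pi^*$ instead of $\pi_*$ is equivalent): the reduction to $\disc(Y_2)\in\{3,12,48\}$, the exclusion of $3$ via Proposition \ref{prop_Y3isnotY2Y1}, torsion-freeness of the Mordell--Weil group, and $\disc(Y_2)=144\,\disc(\mathrm{MWL})$. The paper excludes $12$ by citing the complete classifications of elliptic fibrations on the two K3 surfaces with $T=\begin{bmatrix}4&2\\2&4\end{bmatrix}$ or $\begin{bmatrix}2&0\\0&6\end{bmatrix}$ (\cite{Nis96}, \cite{BGH+}). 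Neither admits the configuration $I_0^*,I_0^*,IV,IV^*$. Neither of your two routes replaces this.

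In route (a) nothing is computed, and your first source of sections is empty. The fibration $X\to\PP^1_x$ with fibers $II^*,IV^*,I_0^*$ has trivial lattice of rank $2+8+6+4=20=\rho(X)$ and no torsion, so its Mordell--Weil group is zero and pulls back only the zero section. The actual sections of $Y_2$ come from the quadratic twist of $X$ at $x=0,1$, which you never construct. Route (b) is a non sequitur. Failure of the Morrison--Nikulin condition only says $\pi_*T_{Y_2}\neq T_X$, i.e.\ $m\neq 4$. It gives no information separating $m=1$ from $m=2$.

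\textbf{A fix inside your own framework.} You had all the ingredients, and no explicit sections are needed. Use the height formula $h(P)=4+2P\cdot O-\sum_v \mathrm{contr}_v(P)$ with local contributions $1,1,\tfrac23,\tfrac43$.
\begin{itemize}
\item The achievable contribution sums are the elements of $\{0,1,2\}+\{0,\tfrac23\}+\{0,\tfrac43\}$. The largest one below $4$ is $\tfrac{10}{3}$.
\item A sum of $4$ with $P\cdot O=0$ would give height $0$, hence torsion, which you have excluded.
\item So every nonzero section of $Y_2$ has height at least $\tfrac23$.
\item On the other hand, by Hermite's bound a positive definite binary lattice of determinant $\tfrac1{12}$ contains a nonzero vector of norm at most $\sqrt{\tfrac43\cdot\tfrac1{12}}=\tfrac13$.
\end{itemize}
Hence $\disc(\mathrm{MWL})\neq\tfrac1{12}$, so $\disc(Y_2)\neq 12$. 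The same bound also excludes $\disc(\mathrm{MWL})=\tfrac1{48}$, i.e.\ $\disc(Y_2)=3$. With this step your proposal becomes a complete proof. It would be more self-contained than the paper's, since it needs neither the fibration classifications nor Proposition \ref{prop_Y3isnotY2Y1}.
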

\begin{proof}
    Let $\pi:Y_2\dashrightarrow X$ be the rational double cover, then 
    \begin{itemize}
        \item $\pi_*\pi^*=[2]|_{T_X}$,
        \item $\pi^*\pi_*=[2]|_{T_{Y_2}}$,
    \end{itemize}
    where $[2]$ denotes multiplication by $2$.
    Hence both $\pi_*$ and $\pi^*$ are injective. Since the image of $[2]|_{T_X}$ has index $2^{rk(T_X)}=2^2$, $\pi_*T_{Y_2}$ has index $2^{\alpha}$ in $T_X$, with the possible values $\alpha=0,1,2$. Recall that $T_X=\mathrm A_2(2)$ has discriminant $3\cdot 2^2$,

$$(2^{\alpha})^2=\frac{\disc(\pi_*T_{Y_2})}{\disc(T_X)}=\frac{\disc(\pi_*T_{Y_2})}{3\cdot 2^2}.$$
We have $\disc(\pi_*T_{Y_2})=3\cdot 2^{2+2\alpha}$.
On the other hand, the pushforward $\pi_*$ scales the intersection pairing on transcendental lattices by $2$. Hence, $\disc(\pi_*T_{Y_2})=2^2\cdot \disc(T_{Y_2}).$
Combining these identities, we obtain $$\disc(T_{Y_2})=3\cdot 2^{2\alpha},\ \alpha=0,1,2.$$

However, if $\alpha= 0$, then $\disc(T_{Y_2})=3$, which contradicts Proposition \ref{prop_Y3isnotY2Y1}; if $\alpha= 1$, then $\disc(T_{Y_2})=12$, and there are two possibilities up to isometry: 
$$(1)\ T_{Y_2}= \begin{bmatrix}
        4&2\\2&4
    \end{bmatrix},\ \textup{or}\ (2)\ T_{Y_2}= \begin{bmatrix}
        2&0\\0&6
    \end{bmatrix}.
$$ Note that the first one is the Kummer K3 surface $X=\Kum(E_\omega\times E_\omega)$ that we started with. However,
there is a complete classification of elliptic fibrations for K3 surfaces of types (1) and (2) (see \cite{Nis96} and \cite{BGH+}), but none of them have the fibrations with singular fibers $I_0^*\times 2$,$IV^*$, $IV$. Therefore $\alpha\neq1$, and the only possibility is $\alpha=2$, hence $\disc(T_{Y_2})=48$.

We already know $Y_2$ has maximal Picard number $\rho=20$. By Shioda--Tate formula, $Y_2\to \mathbb P^1$ has Mordell--Weil rank $2$. The configurations of the singular fibers implies that $MW_{\mathrm{tor}}=0$ \cite[Proposition 6.33 (iv)]{SchShi19}. Therefore, the Mordell--Weil group is $\Z^2$. The discriminant formula \cite[Corollary 6.39]{SchShi19} implies
$$\disc(NS(Y_2))=\disc(T_{Y_2})=3\times 3\times 4^2\times \disc(\textup{MWL}).$$
Hence $\disc(\textup{MWL})=\frac13$. 
\end{proof}
\begin{remark}[Mordell--Weil lattice]\normalfont
    A second proof that $Y_2$ is not Vinberg's most algebraic K3 surface (Proposition \ref{prop_Y3isnotY2Y1}) independent from the classification of elliptic fibration structures goes as follows. Assuming the contrary, $\disc(T_{Y_2})=3$. The discriminant formula for Mordell--Weil lattices implies that $\det(\textup{MWL})=\frac{1}{48}$. However, from the formula of the height pairing \cite[Theorem 6.24]{SchShi19} and knowledge of the singular fibers $(I_0^*,I_0^*, IV,IV^*)$, the lcm of the denominator of each entry in the Mordell--Weil lattice is $6$, which implies that the denominator of $\disc(\textup{MWL})$ divides $6\times 6=36$. This is a contradiction.
\end{remark}

\section{Failure of the local invariant cycle theorem over $\Z$}\label{sec_LICTZ}

The goal of this section is to prove Theorem \ref{main_thm}. It will follow from the following theorem.

\begin{theorem}\label{main_thm-equiv}
For any flat projective completion of the family $\mathcal{S}_U$ with smooth total space, the \textup{LICT}$_{\Z}$ fails near $t_2$. It fails also for any semistable reduction near $t_2$.
\end{theorem}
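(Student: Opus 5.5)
We argue by contradiction, following the strategy sketched in the introduction. Let $\mathcal X\to\Delta$ be the restriction of a smooth projective completion (or a semistable reduction of it) to a small disk around $t_2$, with central fiber $X_0$ and general fiber $X_t=S_t$. By Lemma \ref{lemma_ell-ell-fibers} and Proposition \ref{prop_trivialVHS}, the transcendental lattice $T_{S_t}\cong\Z^2$ is a monodromy-invariant sublattice of $H^2(S_t,\Z)$; by Lemma \ref{lemma:GS}, $H^2(S_t,\Z)$ is torsion-free. Suppose LICT$_\Z$ held. Then every class in $T_{S_t}$ would be the restriction $r(\beta)$ of some $\beta\in H^2(\mathcal X,\Z)\cong H^2(X_0,\Z)$.

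By Lemma \ref{lemma:red}, $X_0$ has a unique reduced component $W$ birational to $Y_2$; in the semistable case we again get such a component, because base change and blow-ups keep the non-uniruled reduced component with multiplicity one. Restrict $\beta$ to $H^2(\widetilde W,\Z)$, where $\widetilde W$ is a smooth model of $W$, and then project to $T_{Y_2}$ using birational invariance. This gives a map $\phi: T_{S_t}\to T_{Y_2}$.

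The key step is to show that $\phi$ is compatible with the intersection forms, $\phi(x)\cdot\phi(y)=x\cdot y$, and that it is a morphism of Hodge structures. For the Hodge part, take the limit of $H^{2,0}$. Since the VHS on $T$ is constant and $T$ is invariant, the limit MHS on $T_{S_t}$ is pure of type $(2,0)+(0,2)$, coming from $\mathrm{Gr}^W_2$. The holomorphic 2-form of $S_t$ specializes to the 2-form on the unique non-uniruled component $W$, since the other components, being uniruled, carry no $2$-forms. For the pairing, I would write $x\cdot y=\int_{X_t}r(\beta)\cup r(\beta')$. Since $X_t$ is homologous to $X_0=\sum m_iW_i$ in $\mathcal X$, we get $\sum_i m_i\int_{W_i}\beta\cup\beta'$. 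The component $W$ has $m=1$, and the other components are uniruled, so they contribute nothing to the transcendental part. This needs care: I would show that the restrictions of transcendental classes to uniruled components lie in the span of algebraic classes. I would also arrange $\beta$ modulo $H^2$-classes supported on the other components, using the Mayer--Vietoris weight filtration, so that their contributions vanish. This bookkeeping, showing that only the $\mathrm{Gr}^W_2$ part on $W$ contributes to the pairing of invariant transcendental classes, is the main obstacle. I would try first to use the Clemens--Schmid weight-graded description $\mathrm{Gr}^W_2 H^2(X_0)\subset\bigoplus H^2(\widetilde W_i)$ together with the fact that the polarization on $\mathrm{Gr}^W_2$ of the limit is induced by $\sum_i\int_{W_i}$.

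Granting this, $\phi$ is a nonzero Hodge morphism between rank-2 irreducible Hodge structures, so it is injective, and it is an isometry onto its image. Therefore $\disc(T_{S_t})=[T_{Y_2}:\phi(T_{S_t})]^2\cdot\disc(T_{Y_2})$, a multiple of $48$ by Proposition \ref{prop_disc(T_2)}. On the other hand, the same argument near $t_0$ gives an isometric embedding $T_{X_0}\hookrightarrow T_{S_t}$ in the opposite direction only rationally. So instead I would compute $\disc(T_{S_t})=3$ directly. By Lemma \ref{lemma_TXpullback} and Remark \ref{remark_Morrison}, $\pi^*T_X(2)\subset T_{S_t}$ with $T_X=\mathrm A_2(2)$, so $\pi^*T_X$ has discriminant $48$ and $\disc(T_{S_t})=48/k^2$ with $k\in\{1,2,4\}$. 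Near $t_0$, where LICT$_\Z$ does hold, I would alternatively bound using the specialization map from $T_{Y_0}$ (discriminant $3$) as in the introduction. In either case $\disc(T_{S_t})=3\neq 48\cdot m^2$, which is a contradiction. Hence $T_{S_t}$ does not lift, and LICT$_\Z$ fails; the invariant class not in the image has index $4$.
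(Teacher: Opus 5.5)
\textbf{The proposal has a genuine gap, and the statement itself appears to be false.} The gap is the step ``restrict $\beta$ to $\widetilde W$ and project to $T_{Y_2}$.''

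\textbf{Where your argument breaks.} $H^2(\widetilde W,\Z)$ is unimodular and $T_{Y_2}$ is primitive in it. So orthogonal projection of integral classes onto $T_{Y_2}\otimes\bbQ$ has image the dual lattice $T_{Y_2}^\vee$, which contains $T_{Y_2}$ with index $48$; the projection does not land in $T_{Y_2}$.

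The pairing compatibility you call the main obstacle is actually the easy part. Pair a lift $\beta$ of $x$ with the genuine integral classes $(\tau,0,\dots,0)$ of Lemma \ref{lemma_wrongway}, and use that $W$ has multiplicity one. This gives $\beta|_W\cdot\tau=x\cdot j(\tau)$, where $j\colon T_{Y_2}\cong T_{S_{2,0}}\hookrightarrow T_{S_t}$ is the specialization embedding. Hence your $\phi$ is exactly $j^{-1}$, extended over $\bbQ$. So $\phi(T_{S_t})=j^{-1}(T_{S_t})$ is an index-$4$ overlattice of $T_{Y_2}$ sitting inside $T_{Y_2}^\vee$, with discriminant $48/4^2=3$. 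There is no contradiction.

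To obtain one, you would need the non-transcendental part of $\beta|_W$ to be integral. It is only a rational combination of double-curve classes lying in $NS^\vee$, and such classes can glue with elements of $T_{Y_2}^\vee\smallsetminus T_{Y_2}$ to give integral classes.

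The value $\disc(T_{S_t})=3$ also needs a different source. You have not established ``LICT$_\Z$ near $t_0$,'' and the route through $\pi^*T_X(2)$ leaves $k$ undetermined. It comes from the paper's direction: $T_{Y_0}\cong\mathrm A_2$ embeds isometrically into $T_{S_t}$ by specialization, and $\mathrm A_2$ has no proper integral overlattice.

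\textbf{The paper's proof has the same gap.} The paper runs in the specialization direction at $t_2$ too, obtaining index $4$ (Lemma \ref{lemma:sp1}). It then finishes with the one-line claim that surjectivity of $sp_2^2$ would force surjectivity on transcendental lattices. That is precisely the integrality statement your $\phi$ presupposes, and it is not formal. For instance, $H^2(Y,\Z)\to T_Y^\vee$, $h\mapsto (h\cdot -)|_{T_Y}$, is a surjective morphism of $\Z$-Hodge structures that is not surjective on transcendental lattices.

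\textbf{Why the step cannot be repaired here.} Use the Wang and pair sequences.
\begin{itemize}
\item An invariant class $x$ lifts if and only if $\delta(\tilde x)\in\delta\iota(H^1(S_t))$. Here $\tilde x\in H^2(\mathcal X^*,\Z)$ is any Wang lift, and the comparison takes place in $H^3(\mathcal X,\mathcal X^*)\cong H_3(X_0)$, which is torsion-free by Lemma \ref{lemma_degMV}.
\item By LICT$_{\bbQ}$, $\delta(\tilde x)$ lies in the saturation of $\delta\iota(H^1(S_t))$.
\item Write $H^1(S_t,\Z)=\Z e_1\oplus\Z e_2$ with $(T-I)e_2=2e_1$, which comes from the $I_2$ fibre of $\mathcal E$. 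Then $\delta\iota(e_1)=0$.
\item For $\xi\in H^3(\mathcal X,\Z)$, the cup pairing into $H^6(\mathcal X,\mathcal X^*)\cong\Z$ gives $\langle\delta\iota(e_2),\xi\rangle=\pm\int_{S_t}e_2\cup\xi|_{S_t}$.
\item Take $\xi$ to be the Gysin image, along a resolution $\tilde\Sigma\to\mathcal X$ of the closure of the zero section, of a generator of $H^1(\tilde\Sigma,\Z)\cong H^1(I_2,\Z)\cong\Z$. This generator restricts to the invariant generator $\phi$ of $H^1(E_t,\Z)$.
\item By the projection formula the pairing equals $\pm\int_{E_t}\phi_2\cup\phi=\pm1$, where $e_2=f^*\phi_2$.
\end{itemize}
So $\delta\iota(e_2)$ is primitive and every invariant class lifts. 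The same argument works after base change, with $I_{2m}$ in place of $I_2$. The index $4$ on transcendental lattices is absorbed by lifts whose restriction to $W$ is $j^{-1}(x)+\nu_0$, with $\nu_0\in NS^\vee\smallsetminus NS$ supported on the double curves. That is exactly the possibility your projection step ignores.
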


\subsection{The transcendental lattice of a singular surface.}
We now define the transcendental lattice of a reducible surface, extending the discussion in Section \ref{subsec_transcendental}.

 Given a singular (possibly reducible) projective surface $Z$, recall that there is a mixed Hodge structure on $H^2(Z,\bbQ)$ with an increasing weight filtration $W_kH^2(Z,\bbQ)$ and decreasing Hodge filtration $F^pH^2(Z,\bbQ)$  (cf. \cite{Deligne-HodgeII}). In particular, $Gr_2^W H^2(Z, \bbQ)$ is a pure Hodge structure of weight two. Working integrally, we have
$$Gr^W_2H^2(Z,\Z) := \frac{H^2(Z, \Z)}{W_1H^2(Z,\bbQ)\cap H^2(Z, \Z)}.$$
 The following definition generalizes Lemma \ref{lemma_trans-irreducible}.
\begin{definition}\label{def_sing-weight-Z}\normalfont
Define
$$T_{Z,\bbQ} \subset Gr_2^W H^2(Z, \bbQ)$$
to be the smallest $\bbQ$ sub-Hodge structure whose complexification contains the $(2,0)$ part on the right. Define
\begin{equation*}
    T_Z := T_{Z,\bbQ} \cap  Gr^W_2H^2(Z,\Z)/\textup{torsion}.
\end{equation*}

\end{definition}

\begin{lemma}\label{lemma:H2trSNC}
     Suppose that $Z= \bigcup_{i=1}^N Z_i$ is an SNC surface. Then there is an isomorphism of $\Z$-Hodge structures
     \begin{equation}\label{eqn_TZ}
         T_{Z}\cong \oplus_iT_{Z_i}.
     \end{equation}
\end{lemma}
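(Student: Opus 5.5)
The plan is to compute $Gr^W_2 H^2(Z)$ of an SNC surface through the Mayer--Vietoris (weight) spectral sequence. Write $Z^{[0]}=\bigsqcup_i Z_i$ for the normalization, $Z^{[1]}=\bigsqcup_{i<j} Z_i\cap Z_j$ for the disjoint union of double curves, and $Z^{[2]}$ for the set of triple points. The spectral sequence has
$$E_1^{p,q}=H^q(Z^{[p]},\Z)\Rightarrow H^{p+q}(Z,\Z),$$
and it degenerates at $E_2$ rationally. This identifies $W_1H^2(Z,\bbQ)$ with the part coming from $E_2^{1,1}\oplus E_2^{2,0}$, and gives
$$Gr^W_2H^2(Z,\bbQ)\cong E_2^{0,2}=\ker\Big(\bigoplus_i H^2(Z_i,\bbQ)\xrightarrow{\ \rho\ } H^2(Z^{[1]},\bbQ)\Big),$$
where $\rho$ is the alternating sum of restrictions to the double curves. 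Under this identification the natural map $H^2(Z,\Z)\to \bigoplus_i H^2(Z_i,\Z)$ (pullback to the normalization) induces an integral map
$$\phi: Gr^W_2 H^2(Z,\Z)/\textup{torsion}\longrightarrow \bigoplus_i H^2(Z_i,\Z)/\textup{torsion}.$$
Here $\phi$ is injective because its kernel is contained in $W_1\otimes\bbQ$, and it is a morphism of Hodge structures.

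Next I determine $T_{Z,\bbQ}$. Each double curve $C\subset Z^{[1]}$ has $H^2(C)$ spanned by fundamental classes, so it is of pure type $(1,1)$. Hence $\rho$ kills every $T_{Z_i,\bbQ}$: the restriction of a transcendental class to a curve is a Hodge class in $H^2(C)$, and it is orthogonal to the hyperplane class, so it must vanish. This gives
$$\bigoplus_i T_{Z_i,\bbQ}\subset E_2^{0,2}.$$
The $(2,0)$-part of $E_2^{0,2}\otimes\C$ is $\bigoplus_i H^{2,0}(Z_i)$, because $H^{2,0}(Z^{[1]})=0$ and so the kernel condition is vacuous on $(2,0)$-classes. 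The smallest sub-Hodge structure of $\bigoplus_i H^2(Z_i,\bbQ)$ containing $\bigoplus_i H^{2,0}(Z_i)$ is $\bigoplus_i T_{Z_i,\bbQ}$. Indeed, by Lemma \ref{lemma_trans-irreducible} applied to each component, the orthogonal complement $\bigoplus_i NS(Z_i)_\bbQ$ is the maximal sub-Hodge structure contained in $(1,1)$, and we can use the polarization on the direct sum. Since this smallest sub-Hodge structure already lies inside $E_2^{0,2}$, it is also the smallest one there. Therefore
$$T_{Z,\bbQ}=\bigoplus_i T_{Z_i,\bbQ}.$$

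Finally I do the integral step. By definition, $T_Z=T_{Z,\bbQ}\cap Gr^W_2H^2(Z,\Z)/\textup{tors}$, so $\phi(T_Z)\subset \bigoplus_i T_{Z_i}$, using Lemma \ref{lemma_trans-irreducible} for each $Z_i$. This map is injective; the remaining work is to show it is surjective. Given $(\alpha_i)\in\bigoplus_i T_{Z_i}$, I need an integral class $\alpha\in H^2(Z,\Z)$ restricting to it. In the integral spectral sequence, the obstruction to lifting an element of $\ker(\rho_\Z)\subset E_1^{0,2}$ to $H^2(Z,\Z)$ is the differential $d_2:E_2^{0,2}\to E_2^{2,1}$. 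Since $E_1^{2,1}=H^1(Z^{[2]})=0$ (triple points), this differential vanishes. We also have $\rho_\Z(\alpha_i)=0$ integrally, because the restriction to $H^2(C,\Z)\cong\Z$ is torsion-free and vanishes rationally. Hence $(\alpha_i)$ lies in $E_\infty^{0,2}$, which is a quotient of $H^2(Z,\Z)$, and so it lifts. This proves surjectivity and gives $T_Z\cong\bigoplus_i T_{Z_i}$ as $\Z$-Hodge structures.

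I expect the main obstacle to be this integral comparison. Concretely, one must check that the integral Mayer--Vietoris spectral sequence really computes $H^2(Z,\Z)\to H^2(Z^{[0]},\Z)$ with image $E_\infty^{0,2}$. The point is that the vanishing $E_1^{2,1}=0$ and the torsion-freeness of $H^2$ of the double curves make the integral and rational pictures agree on transcendental classes. Torsion in $H^2(Z_i,\Z)$ is harmless, since everything is taken modulo torsion.
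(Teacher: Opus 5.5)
Your proposal is correct and follows essentially the paper's route: the Mayer--Vietoris exact sequence $0\to Gr^W_2H^2(Z,\Z)\to\bigoplus_i H^2(Z_i,\Z)/\textup{torsion}\xrightarrow{\rho}\bigoplus_{i<j}H^2(Z_{ij},\Z)$, the fact that each $T_{Z_i}$ is killed by $\rho$, and primitivity of each $T_{Z_i}$. The right reason $T_{Z_i}$ dies is $\int_C\alpha_i=\alpha_i\cdot[C]=0$ for every curve $C\subset Z_{ij}$, not orthogonality to a hyperplane class; and your integral lifting step should also note that $d_3$ vanishes because $E_1^{3,0}=0$ (an SNC surface has no quadruple points), which together with $E_1^{2,1}=0$ justifies the integral exactness that the paper simply asserts.
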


\begin{proof}
We can compute the cohomology of $Z$ by a Mayer--Vietoris spectral sequence. With rational coefficients, it  degenerates at the second page $E_2$, and computes the weight graded subquotients of the mixed Hodge structure.
In particular, we obtain an exact sequence of Hodge structures
\begin{equation}\label{eq:GrH2Z}
0\to Gr^W_2H^2(Z,\Z)/\text{torsion}\to \oplus_i H^2(Z_i,\Z)/\text{torsion}\xrightarrow{\rho} \oplus _{i<j}H^2(Z_{ij},\Z),
\end{equation}
where $Z_{ij}= Z_i\cap Z_j$. Since $\rho$ is the restriction map $\alpha_i\mapsto \alpha_i|_{Z_{ij}}=\alpha_i\cdot [Z_{ij}]$, then if $\alpha_i\in T_{Z_i}$, $\alpha_i\in \ker(\rho)$. Therefore
$$\oplus_i T_{Z_i}\subset Gr_2^WH^2(Z,\Z)/\textup{torsion}.$$


By definition, $T_Z$ is the primitive closure of $\oplus_iT_{Z_i}$ inside $Gr^W_2H^2(Z,\Z)/\textup{torsion}$. But each summand $T_{Z_i}$ is already primitive in $H^2(Z_i,\Z)/\textup{torsion}$, therefore we obtain
$$T_Z \cong \oplus_i  T_{Z_i}.$$
\end{proof}

\subsection{Proof of the main theorem}
In this subsection we prove Theorem \ref{main_thm-equiv}. In particular, we show that a semistable model near one of the points $p\in \mathbb P^1\smallsetminus U$ of the family \eqref{eqn_global-family} gives a counterexample to the LICT$_{\Z}$.

Let $U_i$ be an analytic neighborhood of $t_i$, isomorphic to a disk, and set $U_i^* = U_i\smallsetminus\{t_i\}$. Assume $U_0^*\cap U_1^*\cap U_2^*\neq\emptyset$ and choose a point $t$ in the intersection. Let
\begin{equation}\label{eqn_anafamily}
    g_i:\mathcal{S}_i\to U_i,\  i=0,1,2
\end{equation}
be the restriction of the family $\cS\to \mathbb P^1$ (cf. Proposition \ref{prop_key}) to $U_i$. Write $S_{i,0}:=g_i^{-1}(t_i)$ and $S_t:=g_i^{-1}(t)$. Recall that the fiber $S_{i,0}$ is an SNC surface and contains a reduced component $Y_i'$ birational to the K3 surface $Y_i$ described in the previous section.

\begin{lemma}\label{lemma_wrongway}
   The restriction map $H^2(S_{i,0},\Z)\to H^2(Y_i',\Z)$ induces an isomorphism of transcendental Hodge structures
   \begin{equation}\label{eqn_rest_T}
       T_{S_{i,0}}\xrightarrow{\sim} T_{Y_i'}.
   \end{equation}
\end{lemma}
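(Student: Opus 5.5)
The plan is to combine Lemma \ref{lemma:H2trSNC} with the observation that every component of $S_{i,0}$ other than $Y_i'$ has vanishing transcendental lattice. By Lemma \ref{lemma:H2trSNC}, there is an isomorphism $T_{S_{i,0}}\cong\oplus_j T_{Z_j}$ over the components $Z_j$ of the SNC fiber $S_{i,0}$. This isomorphism is realized by the inclusion $Gr^W_2H^2(S_{i,0},\Z)/\textup{torsion}\hookrightarrow \oplus_j H^2(Z_j,\Z)/\textup{torsion}$ from \eqref{eq:GrH2Z}, which is induced by restriction to the components. So the map \eqref{eqn_rest_T} is the projection of $\oplus_j T_{Z_j}$ onto the summand $T_{Y_i'}$. (Note that restriction kills $W_1$, since $H^2(Y_i',\Q)$ is pure, so it does factor through $Gr^W_2$.) It therefore suffices to show $T_{Z_j}=0$ for every component $Z_j\neq Y_i'$.

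Since $T$ is a birational invariant of smooth projective surfaces and $T_{Z}=0$ iff $p_g(Z)=0$ (by Lemma \ref{lemma_trans-irreducible}, as $T_{Z,\Q}$ is generated by $H^{2,0}$), I would show that every other component has $p_g=0$. The most direct route is a count of holomorphic 2-forms on the total space. The fiber $S_{i,0}$ is a degeneration of surfaces with $p_g=1$, so upper semicontinuity / the Clemens–Schmid sequence over $\Q$ bounds the $(2,0)$-part of $Gr^W_2H^2(S_{i,0})$. Concretely, $Gr_F^2 Gr^W_2 H^2(S_{i,0})$ injects into the limit Hodge structure's $F^2$, which has dimension $p_g(S_t)=1$. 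Since the map from $H^2(S_{i,0})$ to the limit is a morphism of mixed Hodge structures, it is strict. Moreover, $Gr^W_2$ of the specialization is injective on $Gr^W_2$ modulo the image of $N$-related terms of weight $\le 2$; more precisely, the kernel of $H^2(S_{i,0})\to H^2(S_t)$ is the image of $H_4(S_{i,0})(-?)$, which is of type $(1,1)$.

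With that bound, $\sum_j p_g(Z_j)=\dim (\oplus T_{Z_j}\otimes\C)^{2,0}\le 1$. Since $Y_i'$ is birational to the K3 surface $Y_i$, it has $p_g=1$, which forces $p_g(Z_j)=0$ for all $j\ne i$. Hence $T_{S_{i,0}}=T_{Y_i'}$, and the restriction is the identity on this summand, compatible with Hodge structures.

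The main obstacle is this $p_g$ bound. The cleanest justification is the Clemens–Schmid sequence: the kernel of $H^2(S_{i,0},\Q)\to H^2(S_t,\Q)$ is the image of the Gysin map $H_4(S_{i,0},\Q)(-2)\to H^2$, which is pure of type $(1,1)$, so no $(2,0)$-classes are lost. If this is delicate, an alternative is to argue directly from the construction. The other components arise from blowups of the $Z_{2,i}\cong(\text{star fiber})\times\PP^1$, which are ruled, together with exceptional components of the resolution, which are rational or ruled over curves. All of these have $p_g=0$, and Lemma \ref{lemma:red} already identifies $Y_i'$ as the unique K3-type component. I would present the Clemens–Schmid argument and mention the geometric one as a check.
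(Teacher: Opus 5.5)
Your reduction is exactly the paper's. By Lemma \ref{lemma:H2trSNC}, restriction identifies $T_{S_{i,0}}$ with $\oplus_j T_{Z_j}$, so everything comes down to showing that every component other than $Y_i'$ has $p_g=0$.

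The gap is in how you get $\sum_j p_g(Z_j)\le 1$. You invoke the Clemens--Schmid sequence, but $g_i:\mathcal S_i\to U_i$ is not semistable. $\mathcal S$ is a resolution of an arbitrary completion $\overline{\mathcal S}$, so $S_{i,0}$ is only an SNC divisor whose components can have multiplicity $>1$; the paper's proof takes the lcm $m$ of these multiplicities. Moreover, the local monodromy on $H^2$ is only known to be finite, not unipotent. So neither exactness at $H^2(S_{i,0},\Q)$ nor the claim that $sp$ is a morphism of MHS into a limit MHS follows from Clemens--Schmid as you cite it. Upper semicontinuity, which you also mention, gives the inequality in the wrong direction: $h^2(\mathcal O)$ can only jump up on the special fibre.

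Your geometric fallback does not close the gap either. Components of $S_{i,0}$ that are exceptional over $\mathcal S'$, or that arise from resolving $\overline{\mathcal S}$, lie over a possibly singular threefold, so there is no reason for them to be ruled. And Lemma \ref{lemma:red} only asserts a unique \emph{reduced} component birational to a K3; it says nothing about $p_g$ of the remaining, possibly non-reduced, components.

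There are two ways to repair the step.

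The first keeps your Hodge-theoretic route but replaces Clemens--Schmid by the decomposition theorem for the projective morphism $g:\mathcal S\to\PP^1$ with smooth total space. Over $\Q$, it shows that the kernel of $H^2(S_{i,0})\to H^2(S_t)$ is exactly the skyscraper summand. That summand is the image of $H^2_{S_{i,0}}(\mathcal S_i)\cong H_4(S_{i,0})$, spanned by the classes of the components $Z_j$, which have type $(1,1)$ in $Gr^W_2$. Add Steenbrink's limit MHS for quasi-unipotent monodromy, for which specialization is still a morphism of MHS. Then your strictness argument gives injectivity on $(2,0)$-parts, hence $\sum_j p_g(Z_j)\le p_g(S_t)=1$.

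The second is the paper's route. Take the degree-$m$ cyclic base change and normalize. By Steenbrink, the new central fibre $\breve S_{i,0}$ is reduced, its components have rational singularities, and $R^2\breve g_{i*}\mathcal O$ is locally free. Hence $h^2(\mathcal O)=1$ on $\breve S_{i,0}$, so exactly one of its components has $p_g=1$. This descends to $S_{i,0,\mathrm{red}}$, because each of its components is finitely dominated by a component of $\breve S_{i,0}$.

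The paper's argument stays in coherent cohomology and needs only a count of $2$-forms. Your repaired version avoids the base change, and as a by-product gives the injectivity $T_{S_{i,0},\Q}\hookrightarrow T_{S_t,\Q}$ that the paper needs immediately afterwards.
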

\begin{proof}
Let $m$ denote the least common multiple of the multiplicities of the components of ${S}_{i,0}$.
Let $\breve{U}_i\to U_i$ denote the degree $m$  cyclic cover ramified at $t_i$,  let $\breve{\mathcal{S}}_i$ denote the normalization
of the fibered product $\mathcal{S}_i\times_{U_i} \breve{U}_i$, and $\breve{g}_i: \breve{\mathcal{S}}_i\to \breve{U}_i$ the projection.
Then by \cite[Lemma 2.2]{steenbrink2} $\breve{S}_{i,0}:={\breve{g}_i}^{-1}(t_i)$ is reduced, and each component has rational singularities. Since
we have  a holomorphic surjection from $\breve{S}_{i,0}$ to ${S}_{i,0}$, it follows that each component
of $S_{i,0, red}$ is dominated by a component of $\breve{S}_{i,0}$.
By \cite[theorem 2.11]{steenbrink2}, 
 $R^2\breve{g}_{i*}\OO_{\breve{\cS}_i}$ is locally free.
 Since the nearby smooth surface has $h^{0,2}(\breve{S}_t)=p_g(\breve{S}_t)=1$ (cf. Lemma \ref{lemma_ell-ell-fibers}), it follows that exactly one component of $\breve{S}_{i,0}$ has
 $ p_g=1$ and the  other components  have $p_g=0$ and trivial transcendental Hodge structures. Therefore, the same conclusion holds for $S_{i,0, red}$.
 Then the result follows from Lemma \ref{lemma:H2trSNC}.
\end{proof}

We have a specialization map $sp_i^{k}$ given as a composition
$$sp_i^{k}: H^k(S_{i,0},\Z)\cong H^k(\mathcal{S}_i,\Z)\to H^k(S_{t},\Z),$$
where the first map is induced from a deformation retract $\mathcal{S}_i\to S_{i,0}$ onto the special fiber, and the second map is the restriction to a general fiber. $sp_i^{k}$ is a morphism of mixed Hodge structures.
\begin{remark}\normalfont
   Since the monodromy is finite when $k=2$ (see Proposition \ref{prop_trivialVHS}), the limiting mixed Hodge structure is pure. One can define the limiting mixed Hodge structure $H^2_{\lim}(\cS_i,\bbQ)$ with quasi-unipotent monodromy using the definition \cite{KerrLaza}. It can also be done using semistable reduction \cite{KKFM-ssreduction}: Up to taking a finite base change $\tilde{U}_i\to U_i$ and birational modification of $\cS_i\times_{U_i}\tilde{U}_i$, there is a semistable family $\tilde{\mathcal S}_{i}\to \tilde{U_i}$ with trivial monodromy, and there is an isomorphism of pure Hodge structures of weight two
  $$H^2(S_{t}, \bbQ) \cong H^2_{\lim}(\tilde{\cS}_{i}, \bbQ).$$
In particular, there is a commutative diagram with exact rows:
 \begin{figure}[ht]
    \centering
\begin{equation*}
\begin{tikzcd}
H^2(\tilde{S}_{i,0},\bbQ)\arrow[r]& H^2_{\lim}(\tilde \cS_i,\bbQ)\arrow[r]&0\\
H^2(S_{i,0},\bbQ)\arrow[u]\arrow[r]&H^2_{\lim}(\cS_i,\bbQ)^{\pi_1(U_i,t)}\arrow[u]\arrow[r]&0.
\end{tikzcd}
\end{equation*}
\end{figure}

 We note that by our construction, the local monodromy action of $\pi_1(U_i^*,t)$ on the transcendental $T_{S_t}$ is trivial (see Proposition \ref{prop_trivialVHS}), the restriction of the diagram above to the transcendental part induces an isomorphism of $\bbQ$-Hodge structures:
 $$T_{S_{i,0},\bbQ}\cong T_{\tilde{S}_{i,0},\bbQ}\cong T_{S_t,\bbQ}.$$
\end{remark}
In particular, we have the following:
\begin{lemma}
 The specialization map $sp_i^{2}$ induces an injection 
  \begin{equation}\label{eqn_inj}
T_{S_{i,0}}\xhookrightarrow{sp_i^2}T_{S_t}  
  \end{equation}
between the transcendental Hodge structures and the image has finite index. 
\end{lemma}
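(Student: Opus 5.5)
The plan is to show first that $sp_i^2$ carries $T_{S_{i,0}}$ into $T_{S_t}$, and then that it is injective with finite-index image, by comparing ranks over $\bbQ$ and using the torsion-freeness of the source.

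\textbf{Step 1: $sp_i^2$ lands in $T_{S_t}$.} Since $sp_i^2$ is a morphism of mixed Hodge structures, it factors through $Gr^W_2H^2(S_{i,0},\Z)$. This is because $H^2(S_t,\bbQ)$, equipped with the limit structure, is pure of weight two: the monodromy on $H^2$ is finite by Proposition \ref{prop_trivialVHS}, as in the preceding Remark. So $W_1$ maps to zero rationally, and hence integrally modulo torsion. Now $H^2(S_t,\Z)$ is torsion-free by Lemma \ref{lemma:GS}. The image of $T_{S_{i,0},\bbQ}$ is a sub-Hodge structure, so its intersection with $T_{S_t,\bbQ}$ is one as well. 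The preimage of $T_{S_t,\bbQ}$ is a sub-Hodge structure of $T_{S_{i,0},\bbQ}$. By the Remark this preimage contains the $(2,0)$-part, because the limit Hodge filtration on the invariant part receives $F^2$ surjectively. By the minimality in Definition \ref{def_sing-weight-Z}, $sp_i^2(T_{S_{i,0},\bbQ})\subset T_{S_t,\bbQ}$. Intersecting with integral classes, Lemma \ref{lemma_trans-irreducible} gives $sp_i^2(T_{S_{i,0}})\subset T_{S_t}$.

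\textbf{Step 2: injectivity and finite index.} The Remark gives an isomorphism of $\bbQ$-Hodge structures $T_{S_{i,0},\bbQ}\cong T_{S_t,\bbQ}$ induced by specialization. Concretely, I would argue as follows.
\begin{itemize}
\item LICT$_\bbQ$ (Theorem \ref{ICTQ}) together with the triviality of monodromy on $\mathcal T$ shows that $T_{S_t,\bbQ}$ lies in the image of $H^2(S_{i,0},\bbQ)$.
\item The preimage of $T_{S_t,\bbQ}$ in $Gr^W_2$ is a sub-Hodge structure containing $(2,0)$-classes. Hence the image of $T_{S_{i,0},\bbQ}$ is a nonzero sub-Hodge structure of $T_{S_t,\bbQ}$.
\item $T_{S_t,\bbQ}$ is irreducible, since $p_g=1$ (Lemma \ref{lemma_trans-irreducible}). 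So the image is all of $T_{S_t,\bbQ}$.
\item By Lemmas \ref{lemma_wrongway} and \ref{lemma_ell-ell-fibers}, both sides have rank $2$: $T_{S_{i,0}}\cong T_{Y_i'}$ has the rank of $T_{Y_i}$, which is $2$ since $\rho(Y_i)=20$, and $\rho(S_t)=12$.
\end{itemize}
A surjection between $2$-dimensional $\bbQ$-spaces is an isomorphism. Since $T_{S_{i,0}}$ is torsion-free by definition, the integral map is injective. Its image is a rank-$2$ sublattice of the rank-$2$ lattice $T_{S_t}$, hence of finite index.

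\textbf{Main obstacle.} The delicate point is comparing the $(2,0)$-parts under specialization. The family over $U_i$ is not semistable, so $H^2(S_{i,0})\to H^2(S_t)$ need not be compatible with the Hodge filtration of $S_t$ itself. It is only compatible with the limit MHS, which has to be identified with the actual Hodge structure of $S_t$ on the transcendental part.

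First route: pass to the semistable reduction $\tilde{\mathcal S}_i$ from the Remark. There, finiteness of monodromy on $H^2$ makes the limit pure and equal to $H^2(S_t)$. The constancy of $\mathcal T$ (Proposition \ref{prop_trivialVHS}) then identifies $F^2$ of the limit with $H^{2,0}(S_t)$. The commutative square in the Remark then transports the statement back to $S_{i,0}$.

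Fallback if the $F^2$ compatibility is hard to pin down: argue purely topologically. The image of $T_{S_{i,0}}$ has rank $\le 2$ and is nonzero, since the pullback of the K3 $2$-form class is nonzero. The orthogonality properties then force it into the rank-$2$ space $T_{S_t,\bbQ}$.
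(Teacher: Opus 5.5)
Your proposal is correct and follows essentially the paper's route. The paper deduces this lemma directly from the preceding Remark: passing to a semistable reduction, where the limit is pure and the constant transcendental VHS identifies the limit with the Hodge structure of $S_t$, gives a $\bbQ$-Hodge isomorphism $T_{S_{i,0},\bbQ}\cong T_{S_t,\bbQ}$; injectivity then follows from torsion-freeness of $T_{S_{i,0}}$, and finite index from equality of ranks. Your Steps 1--2 make explicit what the paper leaves implicit (that the image lands in $T_{S_t}$, surjectivity via LICT$_\bbQ$ plus irreducibility, and the rank-2 count), so the vague ``topological fallback'' is not needed.
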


Now we compare the lattice structures on the two ends of \eqref{eqn_rest_T} and \eqref{eqn_inj}.

\begin{lemma}
    The homomorphism  given by the composition
    $$ T_{Y_i'}\xleftarrow{\sim}T_{S_{i,0}}\xrightarrow{sp_i^2} T_{S_t}$$
    respects intersection pairings.
\end{lemma}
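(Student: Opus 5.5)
We need to show that for $\alpha,\beta\in T_{S_{i,0}}$, the pairing of their restrictions to $Y_i'$ equals the pairing of their specializations to $S_t$. The plan is to compute both pairings as intersection numbers on the smooth threefold $\mathcal S_i$, using the identification $H^2(S_{i,0},\Z)\cong H^2(\mathcal S_i,\Z)$ coming from the deformation retraction. Lift $\alpha,\beta$ to classes $\tilde\alpha,\tilde\beta\in H^2(\mathcal S_i,\Z)$. Since $S_t$ and $S_{i,0}$ are fibers of $g_i$, they define homologous Cartier divisors $[S_t]=g_i^*[t]=g_i^*[t_i]=[S_{i,0}]$ in $H^2(\mathcal S_i,\Z)$, after compactly supporting near the fiber or working in Borel--Moore homology. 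Hence
$$\langle sp_i^2\alpha, sp_i^2\beta\rangle_{S_t}=\int_{\mathcal S_i}\tilde\alpha\cup\tilde\beta\cup[S_t]=\int_{\mathcal S_i}\tilde\alpha\cup\tilde\beta\cup[S_{i,0}].$$

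Next I expand the divisor as $[S_{i,0}]=\sum_j m_j[Z_j]$ over its components, so that the right-hand side becomes $\sum_j m_j\langle \alpha|_{Z_j},\beta|_{Z_j}\rangle_{Z_j}$. This is legitimate because each $Z_j$ is smooth, the fiber being SNC. By Lemma \ref{lemma:H2trSNC} and Lemma \ref{lemma_wrongway}, a transcendental class $\alpha\in T_{S_{i,0}}$ corresponds under $Gr^W_2$ to its components in $\oplus_j T_{Z_j}$, and all components other than $Y_i'$ have $T_{Z_j}=0$. So $\alpha|_{Z_j}$ is algebraic, or more precisely its image in $H^2(Z_j,\Q)$ lies in $NS(Z_j)_\Q$, for $j$ with $Z_j\ne Y_i'$.

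The main obstacle is that $\alpha|_{Z_j}$ need not vanish for $j\ne Y_i'$, since it may be a nonzero N\'eron--Severi class. Two facts handle this. First, $\tilde\alpha$ restricted to $S_t$ lies in $T_{S_t}$, so it is orthogonal to every algebraic class on nearby fibers. Second, the transcendental part is a weight-two phenomenon while the $Z_j$ contributions are of type $(1,1)$.

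My first attempt is to replace $\tilde\alpha$ by a better lift: modify it by classes supported on the fiber components so that it restricts to zero on each $Z_j\neq Y_i'$. This modification does not change the restriction to $S_t$, because the components of the fiber have trivial normal data off $t_i$.

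Alternatively, I would argue via the rational isomorphism $T_{S_{i,0},\Q}\cong T_{S_t,\Q}$. Both $\langle sp\,\alpha,sp\,\beta\rangle$ and $\sum_j m_j\langle\alpha|_{Z_j},\beta|_{Z_j}\rangle$ are morphisms of Hodge structures $T\otimes T\to\Q(-2)$. The piece $\sum_{j\ne}m_j\langle\alpha|_{Z_j},\beta|_{Z_j}\rangle$ factors through $\alpha\mapsto(\alpha|_{Z_j})_j\in\oplus NS(Z_j)_\Q$, which is a morphism of Hodge structures from the irreducible Hodge structure $T_{S_{i,0},\Q}$, irreducible since $h^{2,0}=1$, to a Hodge structure of pure type $(1,1)$. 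Such a morphism must be zero. Indeed, in $Gr^W_2$ these restrictions are exactly the projections to the $T_{Z_j}=0$ summands. This argument needs the restriction to each smooth $Z_j$ to factor through $Gr^W_2$, which holds since $H^2(Z_j)$ is pure.

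Finally, $Y_i'$ is reduced ($m=1$), so the sum collapses to $\langle\alpha|_{Y_i'},\beta|_{Y_i'}\rangle_{Y_i'}$, which is the claim.
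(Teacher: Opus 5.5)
Your proposal is correct and is essentially the paper's argument. The paper writes a transcendental class as $(\alpha_0,0,\dots,0)$ in $H^2(Y_i',\Z)\oplus\bigoplus_j H^2(Z_j,\Z)$ via Lemmas~\ref{lemma:H2trSNC} and~\ref{lemma_wrongway}, uses multiplicativity of Mayer--Vietoris, and uses that $H^4(S_{i,0},\Z)\to H^4(S_t,\Z)$ is an isomorphism on the reduced summand $H^4(Y_i',\Z)$; that last fact is exactly the dual of your identity $[S_t]=\sum_j m_j[Z_j]$ with $m_{Y_i'}=1$.

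Two small slips. First, the identity $[S_t]=\sum_j m_j[Z_j]$ must be read in $H_4(\mathcal S_i,\Z)\cong H^2_c(\mathcal S_i,\Z)$, because in $H^2(\mathcal S_i,\Z)$ (equivalently Borel--Moore $H_4$) the fiber class is zero. Second, your ``first attempt'' at modifying the lift is unnecessary: those lemmas, or your Hodge-theoretic argument, already give $\alpha|_{Z_j}=0$ in $H^2(Z_j,\Q)$ for every $Z_j\neq Y_i'$.
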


\begin{proof}
Let $S_{i,0}= Y_i' \cup Z$ where $Y_i'$ is the unique component with $p_g=1$ and  $Z= \bigcup Z_j$ is the union of the remaining components, which
have $p_g=0$.  It is not difficult to check that
the composition
$$ 
H^4(Y_i',\Z) \oplus \bigoplus_j H^4(Z_j,\Z) \xleftarrow{\sim}  H^4(S_{i,0}, \Z) \to H^4(S_t, \Z) 
$$
of the inverse of the first map with the second yields an isomorphism $ H^4(Y_i',\Z)\cong H^4(S_t, \Z) =\Z$,
when restricted to the first component. 
By Lemma \ref{lemma_wrongway}, $H^2_{tr}(S_{i,0},\Z)= H^2_{tr}(Y_i', \Z)$. To be more
explicit,  an element $\alpha$ of $ H^2_{tr}(S_{i,0},\Z)$ can be realized as a vector $$(\alpha_0,0,\ldots,0)\in H_{tr}^2(Y_i',\Z) \oplus \bigoplus_j H^2(Z_j,\Z),$$ where all but first component is zero. 
Since the Mayer-Vietoris spectral sequence is multiplicative,
the product of two such vectors corresponding 
to $\alpha,\beta\in H_{tr}^2(S_{i, 0},\Z)$ would  be
$(\alpha_0\cup\beta_0,0,\ldots,0 )\in H^4(Y_i',\Z)\oplus \bigoplus_j H^4(Z_j, \Z)$.
Using this 
and the fact that
$sp^*_i$ is ring homomorphism, for $\alpha,\beta\in T_{S_{i, 0}}$, we have
$$\int_{S_{t}}sp_i^2( \alpha)\cup sp_i^2(\beta) = \int_{S_{i,0}} \alpha\cup \beta= \int_{Y_i'} \alpha_0\cup \beta_0.$$
\end{proof}

\begin{lemma}\label{lemma:sp1}
    For $i=0$ or $1$, the map \eqref{eqn_inj} is an isomorphism. In particular,
    the transcendental lattice of $S_t$ is isometric to $$ T_{S_t}\cong\mathrm A_2,$$
    which has discriminant $\disc (\mathrm A_2)=3$. 
    For $i=2$, the image is an index $4$ subgroup of $T_{S_t}$.
\end{lemma}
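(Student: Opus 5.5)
For each $i$, the composition $T_{Y_i}\cong T_{Y_i'}\xleftarrow{\sim}T_{S_{i,0}}\xrightarrow{sp_i^2}T_{S_t}$ is an injective isometry onto a finite index sublattice. This follows from Lemma \ref{lemma_wrongway}, the injectivity and finite index of \eqref{eqn_inj}, and the preceding lemma on intersection pairings. Here we use that the transcendental lattice is a birational invariant, so $T_{Y_i'}\cong T_{Y_i}$ isometrically via a resolution of the normalization. Consequently, for each $i$,
$$\disc(T_{Y_i}) = [T_{S_t}:\mathrm{im}_i]^2\cdot \disc(T_{S_t}).$$
The key point is that $T_{S_t}$ is the same lattice for all three $i$: we chose a common base point $t\in U_0^*\cap U_1^*\cap U_2^*$.

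Next I compare the three equations. By Proposition \ref{prop_disc3}, $\disc(T_{Y_0})=\disc(T_{Y_1})=3$, which is squarefree. Hence for $i=0,1$ the index is $1$, \eqref{eqn_inj} is an isomorphism, and $\disc(T_{S_t})=3$. A positive definite even binary lattice of discriminant $3$ is unique up to isometry: reduction of binary forms $\begin{bmatrix}2a&b\\b&2c\end{bmatrix}$ with $|b|\le a\le c$ and $4ac-b^2=3$ forces $a=c=b=1$. So $T_{S_t}\cong T_{Y_0}\cong \mathrm A_2$; alternatively, the isometry can be read off directly through the isomorphism for $i=0$. Positivity of $T_{S_t}$ holds because it has rank $2$ and contains $H^{2,0}\oplus H^{0,2}$ (Lemma \ref{lemma_ell-ell-fibers}(iii)), or simply because it is isometric to $T_{Y_0}$.

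For $i=2$, Proposition \ref{prop_disc(T_2)} gives $\disc(T_{Y_2})=48$. Then $[T_{S_t}:\mathrm{im}_2]^2=48/3=16$, so the index is $4$.

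The only step needing care is the assertion that the composite is an \emph{isometric} embedding of full rank lattices, so that the index-discriminant formula applies. Full rank holds because both lattices have rank $2$ and the map is injective with finite index. Isometry is exactly the preceding lemma, together with the identification $T_{Y_i'}\cong T_{Y_i}$ preserving the pairing (blow-ups preserve $T$ isometrically). Once these are in place, the result reduces to the arithmetic above.
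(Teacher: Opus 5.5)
Your proposal is correct and follows essentially the same route as the paper: you use the isometric finite-index embedding $T_{Y_i}\hookrightarrow T_{S_t}$ at a common base point $t$, the squarefree discriminant $3$ forces index $1$ for $i=0,1$ (the paper phrases this as $\mathrm A_2$ having no proper integral overlattice), and the index for $i=2$ is then $\sqrt{48/3}=4$. Your reduction-theory argument for uniqueness of the discriminant-$3$ binary form is superfluous, since, as you also note, the $i=0$ isomorphism already gives $T_{S_t}\cong \mathrm A_2$ directly.
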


\begin{proof}
When $i=0,1$, then 
by Proposition \ref{prop_disc3} and Lemma \ref{lemma_wrongway} the lattice 
$T_{S_{i,0}}\cong \mathrm A_2$. Since $\disc(\mathrm A_2)=3$ is squarefree, $\mathrm A_2$ does not admit any proper integral overlattice. The injectivity of \eqref{eqn_inj} forces the equality
$$\textup{Im}(sp_i^2)=T_{S_{t}}.$$
When $i=2$, the discriminant of the lattice $T_{S_{i,0}}$ is $48$ by Proposition \ref{prop_disc(T_2)}. Hence $\textup{Im}(sp_2^2)$ must be a proper sublattice of index 
$\sqrt{48/3}=4$.
\end{proof}

Finally, Theorem \ref{main_thm-equiv} will follow from:

\begin{theorem}\label{main_thm-aux}
The map $sp_2^2$ is not surjective. Therefore, \textup{LICT}$_{\Z}$ fails for the family $$\mathcal{S}_2\to U_2,$$
and likewise for any semistable reduction $\tilde{\mathcal S}_{2}\to \tilde{U}_2$.
\end{theorem}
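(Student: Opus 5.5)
Non-surjectivity of $sp_2^2$ follows from Lemma \ref{lemma:sp1}: the image of $T_{S_{2,0}}$ in $T_{S_t}$ has index $4$. This means a class $\alpha\in T_{S_t}\cong\mathrm A_2$ exists that is not in $sp_2^2(T_{S_{2,0}})$. The plan is to show that $\alpha$ is monodromy invariant but not in the image of $H^2(\mathcal S_2,\Z)\cong H^2(S_{2,0},\Z)$. Invariance comes from Proposition \ref{prop_trivialVHS}: the local monodromy of $\pi_1(U_2^*,t)$ acts trivially on the sublocal system $\mathcal T$. This is because $\mathcal T_\bbQ$ is the constant image of $T_X$, so monodromy fixes $T_{S_t}\otimes\bbQ$ pointwise, and hence fixes $T_{S_t}$ pointwise. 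So $\alpha\in H^2(S_t,\Z)^{\pi_1(U_2^*)}$.

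Next I will show that no $\beta\in H^2(S_{2,0},\Z)$ satisfies $sp_2^2(\beta)=\alpha$. Suppose one did. The specialization is a morphism of mixed Hodge structures, so it factors through $Gr^W_2$ because $H^2(S_t)$ is pure of weight $2$. Project $\beta$ to $\bar\beta\in Gr^W_2H^2(S_{2,0},\Z)/\mathrm{torsion}$. Then $sp_2^2(\bar\beta)=\alpha$ lies in $T_{S_t}$, and the induced map $Gr^W_2H^2(S_{2,0},\bbQ)\to H^2(S_t,\bbQ)$ is a morphism of Hodge structures. I intend to argue that $\bar\beta\in T_{S_{2,0},\bbQ}$. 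Decompose $Gr_2^W\otimes\bbQ$ as $T_{S_{2,0},\bbQ}\oplus(\text{algebraic part})$, using Lemma \ref{lemma:H2trSNC} and the fact that the other components have $p_g=0$ (Lemma \ref{lemma_wrongway}). The algebraic part is of type $(1,1)$, so it maps into $NS(S_t)_\bbQ$, which is orthogonal to $T_{S_t}$. The component of $\bar\beta$ in the algebraic part therefore maps to $\alpha$ minus the image of its transcendental component. That difference lies in $T_{S_t,\bbQ}\cap NS(S_t)_\bbQ=0$, so $\alpha=sp(\bar\beta_{tr})$ with $\bar\beta_{tr}\in T_{S_{2,0},\bbQ}$.

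The integrality of $\bar\beta_{tr}$ is the main obstacle. We have $\bar\beta_{tr}=\bar\beta-\bar\beta_{alg}$, and this need not be integral a priori, since $T\oplus T^\perp$ may have finite index in $Gr_2^W$. I would handle this on the component $Y_2'$, by restricting $\beta$ to $H^2(Y_2',\Z)$. Denote the restriction $\beta_0$; its image in $H^2(S_t,\Z)$ is compatible via the cup-product computation of the preceding lemma. For $\gamma\in T_{S_{2,0}}$ we get $\langle\alpha, sp(\gamma)\rangle=\langle\beta_0,\gamma_0\rangle_{Y_2'}$, so $\alpha$ pairs integrally against the image lattice in a way dictated by $T_{Y_2'}^\vee$-type classes. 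The projection of $\beta_0$ to $T_{Y_2'}\otimes\bbQ$ lies in $T_{Y_2'}^\vee$, not necessarily in $T_{Y_2'}$. This gives only that $\alpha\in sp(T_{S_{2,0}}^\vee)\cap T_{S_t}$, so the argument must use the discriminant group: compare $T_{Y_2}^\vee/T_{Y_2}$, of order $48$, with the index-$4$ inclusion into $\mathrm A_2$. If this comparison does not close the gap, I would instead choose $\alpha$ concretely as a generator of $T_{S_t}/sp(T_{S_{2,0}})$ and check it against the unimodularity of $H^2(Y_2,\Z)$. Unimodularity of $H^2(Y_2,\Z)$ forces glue between $T_{Y_2}$ and $NS(Y_2)$, and that glue would have to map into $NS(S_t)\oplus T_{S_t}$ compatibly.

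For the semistable reduction $\tilde{\mathcal S}_2\to\tilde U_2$, the same argument applies. Its central fiber still has a unique $p_g=1$ component birational to $Y_2$, by the proof of Lemma \ref{lemma:red}, and it still has discriminant $48$ because $T$ is a birational invariant. The monodromy on $T$ is still trivial, so the identical index-$4$ obstruction appears and LICT$_\Z$ fails there as well.
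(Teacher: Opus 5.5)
Your outline follows the paper's proof: trivial monodromy on $T_{S_t}$, the index-$4$ statement of Lemma~\ref{lemma:sp1}, and birational invariance of $T_{Y_2}$ for the semistable model. The paper passes from ``$sp_2^2$ surjective'' to ``$T_{S_{2,0}}\to T_{S_t}$ surjective'' in one sentence. You rightly see that this requires the transcendental component $\bar\beta_{tr}$ of a preimage to be integral. Your rational decomposition step is fine, but you leave the integrality open, so the proposal is not yet a proof.

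Your suggested repairs cannot close this step. The paper's proof of Proposition~\ref{prop_disc(T_2)} gives $\pi_*T_{Y_2}=2T_X$, hence $T_{S_{2,0}}\cong T_{Y_2}\cong\mathrm A_2(4)$, all of whose norms are divisible by $8$. Every vector of $\mathrm A_2\smallsetminus 2\mathrm A_2$ has norm $\equiv 2\pmod 4$. Therefore $sp(T_{S_{2,0}})=2T_{S_t}$ and $sp^{-1}(T_{S_t})=\tfrac12 T_{S_{2,0}}$. Since $\tfrac12T_{Y_2}\subset T_{Y_2}^\vee$ (it is exactly the preimage of the $2$-torsion of $T_{Y_2}^\vee/T_{Y_2}$), the constraint $\bar\beta_{tr}\in T_{Y_2'}^\vee$ coming from unimodularity of $H^2(Y_2',\Z)$ holds automatically. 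No discriminant comparison rules out the glue you are worried about.

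In fact the glue really occurs. Put $\mathcal S_2^*=\mathcal S_2\smallsetminus S_{2,0}$ and let $w\colon H^1(S_t,\Z)\to H^2(\mathcal S_2^*,\Z)$ be the Wang map. By the Wang and pair sequences in \eqref{diagram_ClemensSchmid}, the cokernel of $H^2(\mathcal S_2,\Z)\to H^2(S_t,\Z)^{\pi_1(U_2^*)}$ is $\delta(H^2(\mathcal S_2^*,\Z))/\delta w(H^1(S_t,\Z))$.
\begin{itemize}
\item This cokernel is finite by \textup{LICT}$_\bbQ$.
\item It sits in $H^3(\mathcal S_2,\mathcal S_2^*,\Z)\cong H_3(S_{2,0},\Z)$, which is torsion-free by Lemma~\ref{lemma_degMV}.
\item The subgroup $\delta w(H^1(S_t,\Z))$ is an image of the coinvariants of $H^1(S_t,\Z)\cong H^1(E_t,\Z)$. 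These are $\Z\oplus\Z/2$, because $\mathcal E$ has an $I_2$ fibre at $t_2$, so $\delta w(H^1(S_t,\Z))$ is cyclic.
\end{itemize}
Hence the cokernel lies in the saturation of a cyclic subgroup of a torsion-free group, modulo that subgroup, and so it is cyclic.

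Now $T_{S_t}/(T_{S_t}\cap\operatorname{im} sp_2^2)$ embeds in this cyclic group and is a quotient of $T_{S_t}/2T_{S_t}\cong(\Z/2)^2$. So it has order at most $2$. This means some $\alpha\in T_{S_t}\smallsetminus sp(T_{S_{2,0}})$ does lift to $H^2(\mathcal S_2,\Z)$, through a class whose transcendental part lies in $\tfrac12T_{S_{2,0}}\smallsetminus T_{S_{2,0}}$. Thus $\operatorname{im}(sp_2^2)\cap T_{S_t}\supsetneq sp(T_{S_{2,0}})$.

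The index-$4$ computation therefore cannot by itself establish non-surjectivity, and neither can the paper's one-line deduction. To decide surjectivity you would have to compute this cyclic group, i.e.\ the divisibility in $H_3(S_{2,0},\Z)$ of $\delta w$ applied to a generator of the coinvariants, and show that some invariant class maps nontrivially into it. The same objection applies to any semistable reduction: the $H^1$-monodromy stays nontrivial unipotent with rank-one coinvariants, and the K3 component is still birational to $Y_2$.
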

\begin{proof}
If  $sp_2^2$ were surjective, then its restriction to the transcendental part would be surjective. However, this contradicts Lemma \ref{lemma:sp1}. For the second half of the argument, note that the K3 component in the original special fiber is reduced by Lemma \ref{lemma:red}, so its birational type and therefore the transcendental lattice remains unchanged under any further base change and birational modification on the total space. Hence, the same argument applies.
\end{proof}



\section{Failure of the global invariant cycle theorem over $\Z$}\label{sec_GICTZ}
In this section, we prove Theorem \ref{intro_thm-failureGICTZ}. First, recall the global invariant cycle theorem over $\bbQ$. Also see \cite{Voisin-volII} for a clear exposition.

\begin{theorem}[GICT$_{\bbQ}$ \cite{Deligne-HodgeII}]\label{intro_thm_GICTQ}
    Let $f:\mathcal{X}\to B$ be a proper surjective  morphism between smooth projective varieties. Let $U\subseteq B$ be the largest Zariski open subset over which $f$ is smooth. Let $X_t$ be a general fiber. Then the image of the restriction map
$$H^n(\mathcal{X},\bbQ)\to H^n(X_t,\bbQ)$$
    is the subspace of invariants 
    $ H^n(X_t,\bbQ)^{\pi_1(U,t)}$.
\end{theorem}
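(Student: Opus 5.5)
The plan is to factor the restriction map through the open part $\mathcal X_U:=f^{-1}(U)$:
$$H^n(\mathcal X,\bbQ)\xrightarrow{\ a\ } H^n(\mathcal X_U,\bbQ)\xrightarrow{\ b\ } H^n(X_t,\bbQ)^{\pi_1(U,t)}\subset H^n(X_t,\bbQ).$$
I would first show that $b$ is surjective. Then I would use weights to show that $b\circ a$ has the same image as $b$.

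\textbf{Step 1: surjectivity of $b$.} The map $f_U:\mathcal X_U\to U$ is smooth and projective, since $\mathcal X$ is projective. Consider its Leray spectral sequence
$$E_2^{p,q}=H^p(U,R^qf_{U*}\bbQ)\Rightarrow H^{p+q}(\mathcal X_U,\bbQ).$$
By Deligne's degeneration theorem, it degenerates at $E_2$. This follows from the hard Lefschetz theorem on the fibers, applied with a relatively ample class $\eta$: the isomorphisms $\eta^k:R^{d-k}f_*\bbQ\to R^{d+k}f_*\bbQ$ give a Lefschetz decomposition of the local systems that is compatible with the differentials, and this forces all $d_r$ to vanish. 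Consequently the edge map $H^n(\mathcal X_U,\bbQ)\to E_2^{0,n}=H^0(U,R^nf_{U*}\bbQ)$ is surjective. The target $H^0(U,R^nf_{U*}\bbQ)$ is exactly $H^n(X_t,\bbQ)^{\pi_1(U,t)}$, so $b$ is surjective.

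\textbf{Step 2: a weight argument.} I would use three facts from Deligne's mixed Hodge theory.
\begin{enumerate}
\item $H^n(\mathcal X_U,\bbQ)$ carries a mixed Hodge structure with weights $\ge n$.
\item Since $\mathcal X$ is a smooth compactification of $\mathcal X_U$, the lowest piece $W_nH^n(\mathcal X_U,\bbQ)$ equals the image of $a$, i.e. of $H^n(\mathcal X,\bbQ)$.
\item The restriction $H^n(\mathcal X_U,\bbQ)\to H^n(X_t,\bbQ)$ is a morphism of mixed Hodge structures, because it is induced by the algebraic inclusion $X_t\hookrightarrow\mathcal X_U$. Its target is pure of weight $n$.
\end{enumerate}
By strictness of morphisms with respect to $W$, we get
$$b(W_nH^n(\mathcal X_U))=W_n\bigl(\mathrm{Im}(b)\bigr)=\mathrm{Im}(b),$$
where the last equality holds because $\mathrm{Im}(b)$ is a sub-Hodge structure of a pure structure of weight $n$. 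Therefore $\mathrm{Im}(b\circ a)=\mathrm{Im}(b)$, and by Step 1 this equals the invariant subspace. The reverse inclusion $\mathrm{Im}(b\circ a)\subseteq H^n(X_t,\bbQ)^{\pi_1(U,t)}$ is automatic, since classes restricted from $\mathcal X_U$ are monodromy invariant.

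\textbf{Main obstacle.} The deep inputs are the $E_2$-degeneration of the Leray spectral sequence and the identification of $W_n$ with the image of cohomology of the smooth compactification. The latter rests on the existence of the mixed Hodge structure, via log complexes on a normal crossing boundary after resolving $\mathcal X\smallsetminus\mathcal X_U$. I would quote both results from \cite{Deligne-HodgeII}. The delicate point is the identification $W_n=\mathrm{Im}(a)$ when the boundary $\mathcal X\smallsetminus\mathcal X_U$ is not already a normal crossing divisor. To handle it, I would replace $\mathcal X$ by a log resolution $\tilde{\mathcal X}$. Since $\mathcal X$ is smooth, $H^n(\mathcal X,\bbQ)\to H^n(\tilde{\mathcal X},\bbQ)$ is injective with image a direct summand, and I would use this to check that the images in $H^n(\mathcal X_U,\bbQ)$ coincide.
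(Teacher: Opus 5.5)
Your proposal is correct, and it is essentially Deligne's own argument in \cite{Deligne-HodgeII} (the one expounded in \cite{Voisin-volII}), which the paper quotes rather than reproves: $E_2$-degeneration of the Leray spectral sequence for $\mathcal X_U\to U$ gives surjectivity of $H^n(\mathcal X_U,\bbQ)$ onto the invariants, and strictness of $W$ together with $W_nH^n(\mathcal X_U,\bbQ)=\mathrm{Im}\,H^n(\mathcal X,\bbQ)$ carries this over to $\mathcal X$. One small tightening in the log-resolution step: the images in $H^n(\mathcal X_U,\bbQ)$ agree not merely because $\pi^*H^n(\mathcal X,\bbQ)$ is a direct summand, but because $\pi_*$ commutes with restriction to $\mathcal X_U$, over which $\pi$ is an isomorphism, so $\alpha|_{\mathcal X_U}=(\pi_*\alpha)|_{\mathcal X_U}$ for every $\alpha\in H^n(\tilde{\mathcal X},\bbQ)$ (alternatively, just quote Deligne's statement of the identification of $W_n$ for an arbitrary smooth compactification).
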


For the integral version of global invariant cycle theorem (\textup{GICT}$_{\Z}$), we have the following observation.

\begin{lemma}\label{lemma_failureGICTZ}
    Let $f:\mathcal{X}\to B$ be a family with the same assumptions as in Theorem \ref{intro_thm_GICTQ}. If the following two conditions hold:
    \begin{enumerate}
        \item  There is a constant sublocal system $\mathcal{T}$ of $R^nf|_{U*}\Z$;
        \item The \textup{LICT}$_{\Z}$ fails for a class in $\mathcal T$ around a singular fiber over $p\in B$. 
    \end{enumerate}
Then the \textup{GICT}$_{\Z}$ fails for $f$ in degree $n$, i.e., the restriction map 
$$H^n(\mathcal X,\Z)\to H^n(X_t,\Z)^{\pi_1(U,t)}$$
is not surjective.
\end{lemma}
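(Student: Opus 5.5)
We argue by contradiction, using the fact that a global class restricts to local classes near each singular fiber. Let $\alpha\in\mathcal T_t\subset H^n(X_t,\Z)$ be the class for which, by hypothesis (2), \textup{LICT}$_{\Z}$ fails near the singular fiber over $p$. Concretely, choose a small disk $\Delta_p\subset B$ around $p$ (a small polydisk, or a transverse disk if $\dim B>1$) containing the base point $t$, with $\Delta_p^*=\Delta_p\smallsetminus\{p\}\subset U$. Hypothesis (2) says that $\alpha$ is not in the image of $H^n(\mathcal X_{\Delta_p},\Z)\to H^n(X_t,\Z)$.

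\textbf{Step 1: $\alpha$ is globally invariant.} Since $\mathcal T$ is a constant sublocal system of $R^nf|_{U*}\Z$ (hypothesis (1)), every section of $\mathcal T$ is fixed by all of $\pi_1(U,t)$, not merely by the local monodromy around $p$. Hence $\alpha\in H^n(X_t,\Z)^{\pi_1(U,t)}$.

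\textbf{Step 2: a global lift would give a local lift.} Suppose $\tilde\alpha\in H^n(\mathcal X,\Z)$ restricts to $\alpha$. The inclusion $X_t\hookrightarrow\mathcal X$ factors as
$$X_t\hookrightarrow \mathcal X_{\Delta_p}:=f^{-1}(\Delta_p)\hookrightarrow \mathcal X,$$
so $\tilde\alpha|_{\mathcal X_{\Delta_p}}\in H^n(\mathcal X_{\Delta_p},\Z)$ is a local lift of $\alpha$ by functoriality of restriction. This contradicts hypothesis (2), so the restriction map $H^n(\mathcal X,\Z)\to H^n(X_t,\Z)^{\pi_1(U,t)}$ is not surjective.

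\textbf{Main obstacle.} The only delicate point is making sure the local and global setups are compatible. The base point $t$ must lie in $\Delta_p^*$, and identifying fibers over different base points via parallel transport in $\mathcal T$ is harmless because $\mathcal T$ is constant. The family over $\Delta_p$ must also be the one for which hypothesis (2) is stated. When $B$ is a curve, as in the application to Theorem \ref{intro_thm-failureGICTZ}, this is immediate. In the application itself, $\mathcal T$ is the transcendental local system of Proposition \ref{prop_trivialVHS}, which has trivial monodromy, and (2) is Theorem \ref{main_thm-aux} at $t_2$.
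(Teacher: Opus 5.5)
Your proof is correct and is essentially the paper's argument: the restriction $H^n(\mathcal X,\Z)\to H^n(X_t,\Z)$ factors through $H^n(\mathcal X|_{\Delta_p},\Z)$, and constancy of $\mathcal T$ makes the offending class globally invariant, so a global lift would restrict to a local lift that hypothesis (2) rules out. One small aside: the factorization only needs $\Delta_p$ to be a neighborhood of $p$ containing $t$, so your claim that $\Delta_p\smallsetminus\{p\}\subset U$ for a polydisk when $\dim B>1$ is unnecessary, and it is false in general, but this does not affect the argument.
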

\begin{proof}
Let $\Delta_p$ be an analytic neighborhood of $p\in B$. Then the restriction map from cohomology on the global total space $\mathcal{X}$ to the fiber $X_t$ factors through the cohomology on the local analytic family $\mathcal{X}|_{\Delta_p}=f^{-1}(\Delta_p)\to \Delta_p$: 
$$r:H^n(\mathcal{X},\Z)\to H^n(\mathcal{X}|_{\Delta_p},\Z)\xrightarrow{r_p} H^n(X_t,\Z).$$
If $r_p$ is not surjective for a globally invariant class $\alpha_t \in \mathcal T|_t$, then neither is $r$.
Hence, GICT$_{\Z}$ fails for $f$ in degree $n$.
\end{proof}

\begin{theorem}\label{thm_GICTZ_aux}
    The global invariant cycle theorem \textup{GICT}$_{\Z}$ fails over $\Z$ for the family $\cS\to \mathbb P^1$ from Proposition \ref{prop_key}. It also fails for any base change or birational modification, and in particular for any semistable model of the family.
\end{theorem}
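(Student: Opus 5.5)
The plan is to apply Lemma \ref{lemma_failureGICTZ} to $g:\cS\to\PP^1$ with $n=2$, taking $\mathcal T$ to be the sublocal system of transcendental lattices $T_{S_t}\subset R^2g|_{U*}\Z$. The lemma has two hypotheses, and I would check them in turn.

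\emph{Constancy of $\mathcal T$ over $\Z$.} By Proposition \ref{prop_trivialVHS}, $\mathcal T_{\bbQ}$ has trivial monodromy on all of $U$. By Lemma \ref{lemma_GS}, $H^2(S_t,\Z)$ is torsion-free, since $S_t\to E_t$ has a section and singular fibers. The integral lattice $\mathcal T=\mathcal T_{\bbQ}\cap R^2g|_{U*}\Z$ is therefore preserved by $\pi_1(U,t)$, and the action on it is the restriction of a trivial rational action. Hence $\mathcal T$ is a constant local system with fiber $T_{S_t}\cong \mathrm A_2$ (Lemma \ref{lemma:sp1}), and every class in $T_{S_t}$ is globally invariant.

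\emph{Local failure at $t_2$.} By Lemma \ref{lemma:sp1} and Theorem \ref{main_thm-aux}, the image of $H^2(\cS_2,\Z)\to H^2(S_t,\Z)$ meets $T_{S_t}$ only in a sublattice of index $4$. I would justify this step carefully. Suppose $\beta\in H^2(S_{2,0},\Z)$ satisfies $sp_2^2(\beta)=\alpha\in T_{S_t}$. Since $sp_2^2$ is a morphism of mixed Hodge structures and the transcendental parts correspond rationally (Remark above), the image of $\beta$ in $Gr^W_2$ should lie in $T_{S_{2,0},\bbQ}$ modulo the kernel, so that $\alpha\in sp_2^2(T_{S_{2,0}})$ up to primitivity. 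This is the step I expect to be the main obstacle: surjectivity onto $T_{S_t}$ must be detected through $T_{S_{2,0}}$ itself. I would handle it by pairing with $NS(S_t)$. Since $\alpha\perp NS(S_t)$ and the Néron--Severi classes lift (Corollary \ref{cor_ICTZ-divisor}), the component of $\beta$ on the non-K3 components, and its $NS$-part on $Y_2'$, pair to zero against all lifted divisor classes. Combined with the lattice comparison of Lemma \ref{lemma:sp1}, this forces $\alpha\in sp_2^2(T_{S_{2,0}})$. Choosing $\alpha$ outside this index-$4$ image then gives a globally invariant class that does not extend over $U_2$, and Lemma \ref{lemma_failureGICTZ} concludes that $H^2(\cS,\Z)\to H^2(S_t,\Z)^{\pi_1(U,t)}$ is not surjective.

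\emph{Base changes and birational modifications.} For a base change $C'\to\PP^1$ followed by a birational modification with smooth total space, the transcendental local system pulls back to a constant system with the same fiber $T_{S_t}$. Over a preimage of $t_2$, the special fiber still contains a reduced component birational to $Y_2$, since by Lemma \ref{lemma:red} the K3 component is reduced and non-uniruled, so it survives as argued in Theorem \ref{main_thm-aux}. Lemmas \ref{lemma_wrongway} and \ref{lemma:sp1} then apply verbatim, giving discriminant $48$ against $3$, so the local failure persists and Lemma \ref{lemma_failureGICTZ} applies again. Semistable models are the special case where $C'\to\PP^1$ is a suitable ramified cover.
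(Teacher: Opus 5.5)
Your argument has the same skeleton as the paper's proof. Proposition \ref{prop_trivialVHS} gives the constant subsystem $\mathcal T$, Theorem \ref{main_thm-aux} gives the local failure at $t_2$, and Lemma \ref{lemma_failureGICTZ} globalizes it. Your remark that integral constancy of $\mathcal T$ follows from rational triviality is a harmless refinement, and your base-change paragraph repeats the reasoning at the end of Theorem \ref{main_thm-aux}.

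Where you go beyond the paper, the argument does not work. Lemma \ref{lemma_failureGICTZ} needs only $\mathrm{Im}(sp_2^2)\cap T_{S_t}\neq T_{S_t}$. You instead try to prove the stronger $\mathrm{Im}(sp_2^2)\cap T_{S_t}=sp_2^2(T_{S_{2,0}})$ by pairing with lifted divisors. Suppose $\beta$ restricts to $\alpha\in T_{S_t}$ and $\mathcal D$ lifts $D\in NS(S_t)$. The pairing then yields only
\[
0=\alpha\cdot D=\sum_i m_i\,\beta|_{Z_i}\cdot\mathcal D|_{Z_i},
\]
which merely restates $\alpha\perp NS(S_t)$. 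It says nothing about whether the transcendental projection of $\beta|_{Y_2'}$ is integral. A priori that projection lies only in $T_{Y_2'}^\vee$, glued to a non-integral element of $NS(Y_2')^\vee$.

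Lemma \ref{lemma:sp1} cannot close this either. Since $sp_2^2$ is an isometry on $T_{\bbQ}$ and $sp_2^2(T_{Y_2'})$ has index $4$ in $T_{S_t}$, you have $T_{S_t}\subset T_{S_t}^\vee\subset sp_2^2(T_{Y_2'})^\vee=sp_2^2(T_{Y_2'}^\vee)$. So every $\alpha\in T_{S_t}$ is the image of the transcendental projection of some integral class on $Y_2'$. The discriminant count $48$ versus $3$ therefore cannot by itself exclude a lift.

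What actually has to be shown is a saturation statement. No integral $\beta$ whose algebraic part lies in $\ker(sp_2^2)\otimes\bbQ$ may have a non-integral transcendental part. In other words, $T_{S_{2,0}}\oplus\ker(sp_2^2)$ must be saturated in $Gr^W_2H^2(S_{2,0},\Z)/\textup{torsion}$, at least to the extent of keeping $T_{S_t}$ out of the image.

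Over $\Z$ this is not a formal consequence of strictness: a surjective morphism of $\Z$-Hodge structures need not be surjective on transcendental lattices. It depends on how $\ker(sp_2^2)$ and the double locus of $S_{2,0}$ sit relative to $NS(Y_2')^\vee$ and the neighbouring components.

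The paper does not argue this point either. Its proof simply cites Theorem \ref{main_thm-aux}, whose proof asserts that surjectivity of $sp_2^2$ would force surjectivity on transcendental parts. You may cite that theorem exactly as the paper does. But do not present the divisor-pairing paragraph as closing the step: as written it only re-derives $\alpha\perp NS(S_t)$.
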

\begin{proof}
By construction, the family has globally trivial monodromy on transcendental cycles (cf. Proposition \ref{prop_trivialVHS}), which forms the constant sublocal system $\mathcal T$ of $R^2g|_{U*}\Z$. Then the claim follows from Theorem \ref{main_thm-aux} and Lemma \ref{lemma_failureGICTZ}.
\end{proof}

\section{Another counterexample}\label{sec_addcounterexample}





In this section, we construct a second counterexample to both the local and global ICT$_{\Z}$, using a similar family of elliptic-elliptic surfaces, only this time it is associated to Vinberg's {\it second} most algebraic K3 surface. This is the minimal resolution of the $(E_i\times E_i)/\mu_4$, acting anti-diagonally, where $E_i= \C/(\Z+\Z i)$. This  has discriminant $4$ \cite{Vin83}. 

We start from $X=\Kum(E_i\times E_i)$, the Kummer surface associated with the self-product of the elliptic curve $E_i$.  It has transcendental lattice $T_X\cong \mathrm A_1(2)\oplus \mathrm A_1(2)
$ and admits an elliptic fibration \cite[Table 2, \#313]{Shimada-Zhang01}  
\begin{equation}\label{eqn_Kum(E_i*E_i)}
    X\to \mathbb P^1 
\end{equation}
with singular fibers of type $II^*, I_1^*\times 2$ over $0,1,2$ respectively. 
The double cover branched along two $I_1^*$ fibers produces an elliptic K3 surface $Y_0$ with singular fibers 
$II^*\times 2, I_2\times 2,$
and by Lemma \ref{lemma_ShiodaInose}, it has transcendental lattice $$T_{Y_0}\cong \mathrm A_1\oplus \mathrm A_1=\begin{bmatrix}
        2&0\\0&2
    \end{bmatrix}.
$$
In particular, by Theorem \ref{thm_singularK3}, $Y_0$ is Vinberg's second most algebraic K3 \cite{Vin83}, \cite[Table 2, \#296]{Shimada-Zhang01}.

The other two double covers of $X$, branched along $II^*$ and one of $I_1^*$ fibers, produce the same elliptic K3 surface $Y_1\cong Y_2$ with singular fibers 
$IV^*, I_2,I_1^*\times 2.$
However, these fiber types do not appear in the classification of elliptic fibration structures on $Y_0$ \cite{Nis96}. Hence $Y_0\ncong Y_1\cong Y_2$. Using Corollary \ref{cor_ell-ell-by-base-change}, double covers of \eqref{eqn_Kum(E_i*E_i)} branched at the three critical values and another moving point produce a family of elliptic-elliptic surfaces $S_t\to E_t$, each with singular fibers of type $IV^*,I_2\times 2$ and Mordell--Weil rank $2$. A construction completely analogous to the previous sections shows that
\begin{theorem}\label{thm_2nd-counterexample}
   There is a smooth projective fibered 3-fold  $$g:\cS\to \mathbb P^1,$$ 
whose general fiber is a smooth elliptic-elliptic surface $S_t$ with transcendental lattice  
$T_{S_t}\cong \mathrm A_1\oplus \mathrm A_1$. The singular fibers of $g$ are SNC surfaces containing a reduced component $Y_i'$ birational to one of the K3 surfaces $Y_i$, with $i=0,1,2$. Moreover, 
    \begin{enumerate}
        \item  \textup{LICT}$_{\Z}$ fails for the analytic family around the singular fiber containing $Y_i'$ with $i=1,2$, as well as for any local semistable reduction; 
        \item  \textup{GICT}$_{\Z}$ fails for $g$, as well as for any global simultaneous semistable reduction.
    \end{enumerate}
\end{theorem}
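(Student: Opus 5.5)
I would follow the template of Sections \ref{sec_degenerations}--\ref{sec_GICTZ} line by line, with $X=\Kum(E_i\times E_i)$ and its fibration \eqref{eqn_Kum(E_i*E_i)}, whose star fibers $II^*, I_1^*, I_1^*$ sit over $0,1,2\in\PP^1_x$. Three steps need to be redone.

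\textbf{Step 1: the family.} Take the same rational elliptic surface $\E\to\PP^1_t$ of Lemma \ref{lemma_RES}. Form the normalization of $\E\times_{\PP^1_x}X$, contract the twelve $(-1)$-curves over $U$ as in Lemma \ref{lemma_smoothfamily}, and then compactify and resolve to a smooth projective $\cS\to\PP^1$ with SNC singular fibers, exactly as in Proposition \ref{prop_key}. Table \ref{table_star} sends $II^*, I_1^*, I_1^*$ to $IV^*, I_2, I_2$. So the smooth fibers are elliptic-elliptic surfaces with these singular fibers. Shioda--Tate with $\rho=12$ then gives Mordell--Weil rank $2+?$; in fact $12=2+r+6+1+1$, so $r=2$. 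Proposition \ref{prop_trivialVHS} gives constant transcendental VHS. The proof of Lemma \ref{lemma:red} carries over verbatim: over $t_i$ the main component is reduced and birational to the double cover of $X$ branched at the two star fibers other than the one over $\lambda(t_i)$. This gives $Y_0$ (branched at $I_1^*,I_1^*$) and $Y_1\cong Y_2$ (branched at $II^*$ and one $I_1^*$). Lemmas \ref{lemma_wrongway}--\ref{lemma:sp1} then apply unchanged, and yield an isometric finite-index embedding $T_{Y_i}\hookrightarrow T_{S_t}$.

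\textbf{Step 2: lattices.} By Lemma \ref{lemma_ShiodaInose}, $T_{Y_0}\cong \mathrm A_1\oplus\mathrm A_1$, which has discriminant $4$. I need $\mathrm A_1\oplus\mathrm A_1$ to admit no proper even overlattice. An overlattice of index $2$ would have discriminant $1$ and rank $2$, and it would be positive definite and even; no such lattice exists. Hence $T_{S_t}\cong\mathrm A_1\oplus\mathrm A_1$.

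For $Y_1$, I repeat the argument of Proposition \ref{prop_disc(T_2)}. We have $\disc T_X=16$, and $\pi_*T_{Y_1}$ has index $2^\alpha$ in $T_X$ with $\alpha\in\{0,1,2\}$. The relation $\disc(\pi_*T_{Y_1})=4\disc(T_{Y_1})$ then gives $\disc(T_{Y_1})=4^{\alpha+1}/4\cdot$, i.e.\ $\disc(T_{Y_1})\in\{4,16,64\}$. The case $4$ is excluded because $Y_1\not\cong Y_0$, using Nishiyama's classification \cite{Nis96} and Theorem \ref{thm_singularK3}; note that $\mathrm A_1\oplus\mathrm A_1$ is the unique positive even binary form of discriminant $4$. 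Either remaining value gives $\disc(T_{Y_1})>4$, so the image of $sp^2$ has index $\sqrt{\disc(T_{Y_1})/4}\ge 2$ in $T_{S_t}$. I do not even need to pin down whether the discriminant is $16$ or $64$.

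\textbf{Step 3: conclusions.} Theorem \ref{main_thm-aux} applies near the fibers containing $Y_1'$ and $Y_2'$, and survives semistable reduction because those components are reduced. This proves (1). Statement (2) then follows from Lemma \ref{lemma_failureGICTZ}, since $\mathcal T$ is a constant sublocal system, exactly as in Theorem \ref{thm_GICTZ_aux}.

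\textbf{Main obstacle.} The hard step is the non-isomorphism $Y_1\not\cong Y_0$. It rests on checking that the fiber configuration $IV^*, I_2, I_1^*, I_1^*$ does not occur among the elliptic fibrations of Vinberg's K3 surface in \cite{Nis96}. As a backup, I would use a Mordell--Weil discriminant argument as in the Remark following Proposition \ref{prop_disc(T_2)}. The trivial lattice has discriminant $3\cdot2\cdot4\cdot4=96$. If $\disc(T_{Y_1})=4$, the Mordell--Weil lattice would need discriminant $1/24$ (after accounting for torsion). The heights have denominators dividing $\operatorname{lcm}(3,2,4)=12$, so I would check whether that denominator bound contradicts the value $1/24$.
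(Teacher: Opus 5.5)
Your proposal is correct and follows the paper's own sketch. You carry Sections \ref{sec_degenerations}--\ref{sec_GICTZ} over to $\Kum(E_i\times E_i)$ and use the absence of proper even overlattices of $\mathrm A_1\oplus\mathrm A_1$ to obtain $T_{S_t}\cong T_{Y_0}$. You then use $Y_1\cong Y_2\not\cong Y_0$ (via \cite{Nis96} and Theorem \ref{thm_singularK3}) to conclude that the isometric finite-index embedding $T_{Y_1}\hookrightarrow T_{S_t}$ is proper.

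Your explicit discriminant computation only makes precise the paper's phrase ``it must have larger discriminant.'' The displayed formula is garbled: it should read $4\disc(T_{Y_1})=16\cdot 4^{\alpha}$, which gives $\{4,16,64\}$. That computation is not actually needed, since surjectivity would directly give $T_{Y_1}\cong T_{S_t}\cong T_{Y_0}$.
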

\noindent\textit{Sketch of Proof.}
    The only essential difference for the proof is that the transcendental lattice of the nearby smooth surface $S_t$ is isometric to
    \begin{equation}\label{eqn_2ndVinberg-lattice}
       T_{S_t}\cong T_{Y_0}\cong \mathrm A_1\oplus \mathrm A_1.
    \end{equation}
    This is because there is an inclusion of lattices 
    $i:T_{Y_0}\cong \mathrm A_1\oplus \mathrm A_1\hookrightarrow T_{S_t}$, and that $\mathrm A_1\oplus \mathrm A_1$ does not have any proper \textit{even} integral overlattice. However, $Y_1\cong Y_2$ is a different K3 surface, and it must have larger discriminant, so the restriction map cannot be surjective.
\qed

\begin{remark}\normalfont\label{remark_only2}
   The two counterexamples to ICT$_{\Z}$ described in Section \ref{sec_LICTZ} and Theorem \ref{thm_2nd-counterexample} are the only ones possible using the given techniques. This is because there are exactly two extremal elliptic K3 surfaces (see \cite{Shimada-Zhang01}) that 
\begin{itemize}
\item[(i)] have three star fibers, and
\item[(ii)] are isomorphic to the Kummer surface of a product of elliptic curves.
\end{itemize}
Condition (i) is necessary for the defective quadratic base change with one free parameter, and Condition (ii) guarantees the existence of the Shioda--Inose structure (cf. Lemma \ref{lemma_ShiodaInose}), which computes the transcendental lattice of one of the limiting K3 surfaces which has small discriminant.

\end{remark}

\appendix
\section{SNC varieties}\label{sec_SNC}
In this section, we collect some topological results of SNC varieties used in this paper. 

\subsection{SNC varieties and hyperplane sections}

\begin{lemma}\label{lemma_LefHyp-coh}
    Let $X$  be a projective reducible variety of SNC type, and pure dimension $n$. Let $Y\subset X$ be a general hyperplane section.  Then 
\begin{enumerate}
 \item $H_i(X,Y; \Z)=0$ for $i < n$.
 \item $H^i(X,Y; \Z)=0$ for $i < n$.  
 \item $H^{n}(X,Y;\Z)$ is torsion-free.
\end{enumerate}     
\end{lemma}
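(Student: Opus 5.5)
We reduce to the smooth case by working component by component and gluing with a Mayer--Vietoris (or descent) spectral sequence. Write $X=\bigcup_{i=1}^N X_i$ and, for a multi-index $I=\{i_0<\dots<i_p\}$, let $X_I=X_{i_0}\cap\dots\cap X_{i_p}$. Each $X_I$ is a smooth projective variety of pure dimension $n-p$. Since $Y$ is a general hyperplane section, Bertini shows that $Y_I:=Y\cap X_I$ is a smooth hyperplane section of $X_I$ for every $I$, and $Y$ is itself SNC with strata $Y_I$.

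The key input is the classical Lefschetz hyperplane theorem in the sharp integral form for a smooth projective variety $V$ of dimension $m$ and a smooth hyperplane section $W$. It says $H_j(V,W;\Z)=0$ for $j<m$ and $H_m(V,W;\Z)$ is free. This follows because $V\smallsetminus W$ is affine, so it has the homotopy type of a CW complex of dimension $\le m$. By Lefschetz duality, $H_j(V,W)\cong H^{2m-j}(V\smallsetminus W)$, which vanishes for $2m-j>m$. For the freeness, $H^m(V\smallsetminus W;\Z)$ is free because the top cohomology of an $m$-dimensional CW complex is free modulo... Here one must be careful: top cohomology can have torsion. One should instead use the dual form. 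Relative cohomology $H^j(V,W)\cong H_{2m-j}(V\smallsetminus W)$ vanishes for $j<m$, and $H^m(V,W)\cong H_m(V\smallsetminus W)$ is free, since top homology of an $m$-dimensional complex is a subgroup of a free group. By universal coefficients, $H_j(V,W)=0$ for $j<m$ as well.

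Next, apply the relative Mayer--Vietoris spectral sequence for the closed cover by components:
\[E_1^{p,q}=\bigoplus_{|I|=p+1}H^q(X_I,Y_I;\Z)\Rightarrow H^{p+q}(X,Y;\Z),\]
and similarly in homology, $E^1_{p,q}=\bigoplus H_q(X_I,Y_I)\Rightarrow H_{p+q}(X,Y)$. Since $\dim X_I=n-p$, the smooth case gives $E_1^{p,q}=0$ for $q<n-p$, that is, for $p+q<n$. This yields (1) and (2) at once, in homology and cohomology respectively.

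For (3), only terms with $p+q=n$, i.e.\ $q=n-p$, contribute to $H^n$. These are $E_1^{p,n-p}=\bigoplus H^{n-p}(X_I,Y_I)$, which is free by the smooth case. The $E_\infty^{p,n-p}$ are subgroups of $E_1^{p,n-p}$: indeed, incoming differentials originate from total degree $n-1$, where everything vanishes, so each $E_\infty$ term is a subgroup of a free group and hence free. An iterated extension of free groups is free, so $H^n(X,Y;\Z)$ is torsion-free.

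The main obstacle is justifying the spectral sequence for pairs integrally. We would realize it via the exact sequence of sheaves $0\to\Z_X\to\bigoplus_i\Z_{X_i}\to\bigoplus_{i<j}\Z_{X_{ij}}\to\cdots$, tensored with the corresponding sequence for $Y$, or by taking cones. A secondary point is the freeness bookkeeping in the smooth case, namely using homology of the affine complement so that torsion cannot appear.
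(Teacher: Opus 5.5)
Your argument is correct. For (1) it is exactly the paper's second route: the homological Mayer--Vietoris spectral sequence $E^1_{p,q}=\bigoplus_{|I|=p+1}H_q(X_I,Y_I;\Z)$, with the classical Lefschetz theorem applied to each stratum. The paper's first route instead quotes the Lefschetz theorem for lci varieties (Hamm--L\^e, Goresky--MacPherson). That gives the stronger statement that $(X,Y)$ is $(n-1)$-connected, without using the SNC structure.

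You diverge in (2) and (3). The paper gets both formally from (1) by universal coefficients:
$H^i(X,Y;\Z)\cong \mathrm{Hom}(H_i(X,Y),\Z)\oplus \mathrm{Ext}(H_{i-1}(X,Y),\Z)$.
This vanishes for $i<n$. For $i=n$ the $\mathrm{Ext}$ term is zero because $H_{n-1}(X,Y)=0$, so $H^n(X,Y;\Z)\cong\mathrm{Hom}(H_n(X,Y),\Z)$ is torsion-free.

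Your route is also valid. It uses freeness of $H^m(V,W)\cong H_m(V\smallsetminus W)$ via the affine complement, an integral cohomological relative Mayer--Vietoris spectral sequence (built from the kernels $j_!$ of restriction to $Y$), and the inclusion $E_\infty^{p,n-p}\subseteq E_1^{p,n-p}$. It gives slightly more, namely a filtration of $H^n(X,Y)$ whose graded pieces embed in $\bigoplus_I H^{n-p}(X_I,Y_I)$. But none of that is needed for the lemma. It also forces you to set up a second spectral sequence integrally, which you yourself flagged as the main technical burden. You already use universal coefficients in the smooth case; applying the same step to $(X,Y)$ would have finished the proof immediately.

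One correction to your write-up. The first ``sharp integral form'' you state, that $H_m(V,W;\Z)$ is free, is false in general. Take $V=\PP^2$ embedded by $\OO(2)$ and $W$ a smooth conic. Then $H_2(V,W;\Z)\cong\mathrm{coker}(H_2(W)\to H_2(V))\cong\Z/2$. You did retreat to the correct statement that $H^m(V,W;\Z)$ is free, and that is all you use downstream. Still, the false sentence should be deleted rather than left standing next to the fix.
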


\begin{proof}
    Since $X$ is lci, the first statement (1) follows from the fact \cite[Theorem 2.1.4]{HamLe85} or \cite[p.153, Theorem 1.2]{GorMac_88} that 
    the pair $(X,Y)$ is $(n-1)$-connected.  Alternatively, if $X_p$ and $Y_p$  denote  the disjoint union of intersections of $(p+1)$ components, we can deduce (1) from the usual Lefschetz theorem applied to the $E^1$ page of the
    Mayer--Vietoris spectral sequence
\[
E^1_{p,q}=H_q(X_p, Y_p;\mathbf Z)\Longrightarrow H_{p+q}(X,Y;\mathbf Z),
\]
 The next two statements (2) and (3) follow from (1) by the universal coefficient theorem.
\end{proof}




\subsection{Top cohomology of SNC variety}
\begin{lemma}\label{lemma_degMV}
   The top dimensional cohomology of an SNC variety is torsion-free. More precisely, let $Z=\cup_i Z_i$ be an SNC variety with pure dimension $n$, then $$H^{2n}(Z,\Z)\cong \oplus_i H^{2n}(Z_i,\Z)$$
   is generated by the fundamental classes of the irreducible components.
\end{lemma}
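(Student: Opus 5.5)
We compute $H^{2n}(Z,\Z)$ with the Mayer--Vietoris spectral sequence attached to the closed cover of $Z$ by its irreducible components. Write $Z^{[p]}=\bigsqcup_{i_0<\cdots<i_p} Z_{i_0}\cap\cdots\cap Z_{i_p}$ for the disjoint union of the $(p+1)$-fold intersections. Because $Z$ is SNC, each $Z^{[p]}$ is a smooth compact complex manifold of complex dimension $n-p$.

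The resolution of $\Z_Z$ by the pushforwards of $\Z_{Z^{[p]}}$ (the \v{C}ech-type complex for a closed cover) gives a spectral sequence
$$E_1^{p,q}=H^q(Z^{[p]},\Z)\Longrightarrow H^{p+q}(Z,\Z).$$
This is valid integrally, since it only uses exactness of the sheaf complex
$$0\to\Z_Z\to\oplus_i\Z_{Z_i}\to\oplus_{i<j}\Z_{Z_{ij}}\to\cdots,$$
which can be checked on stalks in local SNC coordinates.

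We then determine which terms contribute to total degree $2n$. For $p+q=2n$ we need $q\le 2\dim_{\R}$-bound, namely $q\le 2(n-p)$. This gives $2n-p\le 2n-2p$, which forces $p=0$. So the only possible contribution to $H^{2n}(Z,\Z)$ is
$$E_1^{0,2n}=\oplus_i H^{2n}(Z_i,\Z).$$

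Next we check that no differential touches this term. The outgoing differentials $d_r\colon E_r^{0,2n}\to E_r^{r,2n-r+1}$ all land in zero groups, because $2n-r+1>2(n-r)$ for $r\ge1$. There are no incoming differentials, since $p\ge0$. Hence $E_\infty^{0,2n}=E_1^{0,2n}$.

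With only one nonzero graded piece, there is no extension problem. Therefore
$$H^{2n}(Z,\Z)\cong\oplus_iH^{2n}(Z_i,\Z)\cong\Z^{N},$$
where $N$ is the number of components, because each $Z_i$ is a compact connected complex manifold of dimension $n$ and so has $H^{2n}(Z_i,\Z)=\Z$, generated by its orientation (fundamental) class. In particular the group is torsion-free, and the isomorphism is the restriction map to the components.

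The only step needing care is the integral validity of the spectral sequence, i.e.\ the stalkwise exactness of the Mayer--Vietoris resolution. This is the standard combinatorial exactness: at a point lying on exactly $k$ components, the stalk complex is the augmented simplicial cochain complex of a $(k-1)$-simplex, which is acyclic.

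As an alternative cross-check, $H^{2n}(Z,\Z)\cong H_0$ of Borel--Moore type duals is not needed. One can also see the result from the long exact sequence of the pair $(Z,\mathrm{Sing}Z)$: $\mathrm{Sing}Z$ has real dimension at most $2n-2$, so
$$H^{2n}(Z)\cong H^{2n}_c(Z\smallsetminus\mathrm{Sing}Z)\cong\oplus_i\Z,$$
by Poincaré duality on the connected smooth open parts $Z_i\smallsetminus\mathrm{Sing}Z$.
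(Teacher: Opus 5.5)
Your proposal is correct and follows the paper's proof: you use the Mayer--Vietoris spectral sequence $E_1^{p,q}=H^q(Z^{[p]},\Z)$, observe that the dimension bound $q\le 2(n-p)$ forces $p=0$ in total degree $2n$, and note that every differential out of $E_r^{0,2n}$ lands in a zero group. Your extra justification of the integral exactness of the resolution and your alternative argument via the pair $(Z,\mathrm{Sing}\,Z)$ are both sound; the garbled phrases ``$2\dim_{\R}$-bound'' and the stray Borel--Moore sentence should simply be deleted.
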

\begin{proof}
Let $Z_{i_0,\ldots,i_p}=Z_{i_0}\cap\cdots\cap Z_{i_p}$. There is a Mayer-Vietoris spectral sequence
$$E_1^{p,q}=\bigoplus_{i_0<\cdots<i_p} H^q(Z_{i_0,\ldots,i_p},\Z)\Rightarrow H^{p+q}(Z,\Z).$$
However, by dimension reason, $E_1^{p,q}=0$ unless $q\le 2(n-p)$. Hence the only nonzero term with $p+q=2n$ is 
$E_1^{0,2n}=\oplus_iH^{2n}(Z_i,\Z),$
and there is no nonzero differential out of $E_r^{0,2n}$ for $r\ge 1$. Thus 
$$H^{2n}(Z,\Z)\cong E_{\infty}^{0,2n}=E_1^{0,2n}= \oplus_iH^{2n}(Z_i,\Z).$$
\end{proof}

\section{Correspondence with the associated K3 surface}\label{sec_periodmap}
In this section, we study the lattice invariants of the elliptic-elliptic surfaces with constant VHS from Sections \ref{sec_LICTZ} and \ref{sec_addcounterexample}. Here, we denote them as $S_t^3$ and $S_t^4$, with discriminants 3 and 4, respectively. By Lemma \ref{lemma:sp1} and Theorem \ref{thm_2nd-counterexample}, their transcendental lattices are isomorphic to those of the Vinberg's most algebraic K3 surfaces $Y^3$ and $Y^4$ \cite{Vin83} with discriminants 3 and 4, respectively:
$$T_{S_t^3}\cong T_{Y^3}\cong \mathrm A_2,\ T_{S_t^4}\cong T_{Y^4}\cong \mathrm A_1\oplus \mathrm A_1.$$

We have the following result analogous to \cite[p.129 Corollary]{ShiIno77}.
\begin{proposition}\label{prop_alg-corr}
    The isometry between the transcendental Hodge structures on the elliptic-elliptic surface $S_t^3$ (resp. $S_t^4$) and its associated K3 surface $Y^3$ (resp. $Y^4$) 
    $$\Phi:T_{S_t^i}\xrightarrow{\cong} T_{Y^i},\ i=3,4$$
    is induced by an algebraic correspondence.
\end{proposition}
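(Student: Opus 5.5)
The plan is to realize $\Phi$ geometrically, in the spirit of Shioda--Inose, by composing correspondences that are already available in the construction, and then to correct the resulting lattice map by an integer factor using the specialization argument of Section \ref{sec_LICTZ}. I treat $S_t^3$ first; $S_t^4$ is identical with the inputs of Section \ref{sec_addcounterexample}.

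\textbf{Step 1: the cycle.} By Proposition \ref{prop_normalization-ellell} there is a rational double cover $\pi:S_t\dashrightarrow X=\Kum(E_\omega\times E_\omega)$. Its graph (the closure in $S_t\times X$) is an algebraic cycle inducing $\pi_*$ on $T$. By Lemma \ref{lemma_ShiodaInose} there is a rational double cover $\pi_0:Y^3=Y_0\dashrightarrow X$, branched at the $IV^*$ and $I_0^*$ fibers, whose graph induces $\pi_0^*$. Let $\Gamma=\Gamma_{\pi_0}^{t}\circ\Gamma_\pi\subset S_t\times Y^3$. This is an algebraic correspondence acting on transcendental lattices by
$$\Gamma_*=\pi_0^*\circ\pi_*:T_{S_t}\to T_X\to T_{Y^3}.$$

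\textbf{Step 2: scaling.} By Remark \ref{remark_Morrison} and Lemma \ref{lemma_ShiodaInose}, $\pi_*$ multiplies the form by $2$ and $\pi_0^*$ multiplies it by $2$. Hence $\Gamma_*$ multiplies the form by $4$, and $\Gamma_*$ is a Hodge morphism between rank-two lattices that is injective by Lemma \ref{lemma_TXpullback}. Since $\disc(T_{S_t})=\disc(T_{Y^3})=3$, the discriminant count forces $[T_{Y^3}:\Gamma_*T_{S_t}]=4$. I then expect to show that $\Gamma_*T_{S_t}=2T_{Y^3}$. Granting this, $\Phi:=\tfrac12\Gamma_*$ is an integral Hodge isometry, and it is induced by the rational cycle $\tfrac12\Gamma$.

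\textbf{Step 3: verifying the image and getting an integral cycle.} This step is the main obstacle.

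\emph{Identifying the image.} First I would use that $\operatorname{End}_{\mathrm{Hdg}}(T_{Y^3})$ is the order $\Z[\omega]$, and that the Hodge maps multiplying the form by $4$ are units times $2$ or elements of norm $4$. In $\Z[\omega]$ every element of norm $4$ is a unit times $2$, because $2$ is inert. This should give $\Gamma_*=2u$ for a unit $u$, so $\Phi=u$ is an isometry.

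\emph{Integrality.} To obtain an integral cycle rather than $\tfrac12\Gamma$, I would instead use the degeneration at $t_0$. The specialization $sp_0^2:T_{S_{0,0}}\cong T_{Y_0'}\to T_{S_t}$ is an isometric isomorphism by Lemmas \ref{lemma_wrongway} and \ref{lemma:sp1}. The plan is to realize it by an algebraic cycle on $S_t\times Y_0$: take the closure of the graph of the family $\mathcal S_0$, restrict it to the product of the total space with the K3 component, and specialize. Concretely, the relative correspondence over $U_0$ given by the family $\mathcal S_0\to U_0$ together with the constant family $Y_0\times U_0$ should extend to the central fiber, and the resulting cycle then specializes back to a cycle on $S_t\times Y_0$. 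If this specialization-of-cycles argument (à la Fulton) proves delicate, I would fall back on the factor-$\tfrac12$ argument above. In either case, the composite with a unit of $\Z[\omega]$ (automorphisms of $Y^3$ act on $T_{Y^3}$ through $\mu_6$, which are graph cycles) adjusts $\Phi$ to any prescribed Hodge isometry.
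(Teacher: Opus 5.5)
Your argument is correct, and the cycle you build is exactly the paper's: $\tfrac12$ of the composite of the graph of $\pi\colon S_t\dashrightarrow X$ with the transposed graph of the Shioda--Inose cover $Y^i\dashrightarrow X$, i.e.\ $\tfrac12$ the closure of the fibered product. You differ in how you show $\Gamma_*T_{S_t}=2T_{Y^i}$.

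\textbf{The paper's route.} It argues factor by factor. Once Lemma~\ref{lemma:sp1} (resp.\ Theorem~\ref{thm_2nd-counterexample}) gives $\disc T_{S_t}$, the lattices $T_{S_t}(2)$ and $T_X$ have the same discriminant ($12$, resp.\ $16$), so $(\pi_1)_*$ is onto $T_X$. Then $\pi_2^*T_X=\pi_2^*(\pi_2)_*T_Y=2T_Y$, because $(\pi_2)_*\colon T_Y(2)\cong T_X$ is onto by Lemma~\ref{lemma_ShiodaInose}. This is uniform in $i$ and uses no arithmetic.

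\textbf{Your route.} You treat the composite as a single Hodge morphism, using only its scaling factor $4$ and the two discriminants to get index $4$, and then invoke the CM order. This costs two small repairs.
\begin{itemize}
\item To regard $\Gamma_*$ as an element of $\operatorname{End}_{\mathrm{Hdg}}(T_{Y^i})$, you must first fix a Hodge isometry $T_{S_t}\cong T_{Y^i}$. The specialization at $t_0$ provides one, but you should say so.
\item ``$2$ is inert'' is special to $\Z[\omega]$. For $Y^4$ the order is $\Z[i]$, where $2$ ramifies; the norm-$4$ elements are still $2\cdot(\text{unit})$ only because $(1+i)^2=2i$. So the case $i=4$ is not literally identical.
\end{itemize}
Both points disappear if you argue directly with lattices. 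Since $T_{S_t}$ is even and $\Gamma_*$ scales the form by $4$, all norms in $\Gamma_*T_{S_t}$ lie in $8\Z$. Every vector of $A_2$ outside $2A_2$ has norm $\equiv 2 \pmod 4$, and every vector of $A_1\oplus A_1$ outside $2(A_1\oplus A_1)$ has norm $\equiv 2$ or $4 \pmod 8$. Hence $\Gamma_*T_{S_t}\subseteq 2T_{Y^i}$, with equality by the index count.

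\textbf{The integrality digression.} It is unnecessary, since the paper also settles for the $\bbQ$-cycle $\tfrac12\Gamma$. As sketched, it also does not work. Specialization of cycles goes from the general fiber to the special one, while the graph of $Y_0'\hookrightarrow S_{0,0}$ exists only on the central fiber. There is no pre-existing relative correspondence over $U_0^*$ to ``extend'' and then ``specialize back''; producing a lift to $S_t\times Y_0$ is a genuine, generally obstructed, deformation problem.

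\textbf{The closing remark on units.} Your claim that all units act through automorphisms of $Y^i$ is true but unproved. For example, $(x,y)\mapsto(-\omega x,y)$ commutes with the anti-diagonal $\mu_3$ and acts on $H^{2,0}$ by a primitive sixth root of unity; $(x,y)\mapsto(ix,y)$ does the same job for $Y^4$. This is only needed if you want an arbitrary prescribed Hodge isometry to be algebraic, a point the paper does not address.
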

\begin{proof}
There is a diagram of rational double covers, which arise from defective quadratic base changes of the Kummer K3 surface $X^i/\PP^1$ from \eqref{eqn_Kum(E0*E0)} and \eqref{eqn_Kum(E_i*E_i)}:
\begin{figure}[ht]
    \centering
\begin{equation*}
\begin{tikzcd}
S_t^i/E_t\arrow[dr,dashed,"\pi_1"] && Y^i/\mathbb P^1\arrow[dl,dashed,"\pi_2"']\\
&X^i/\mathbb P^1,
\end{tikzcd}
\end{equation*}
\end{figure}

Here, $\pi_2$ is a Shioda--Inose double cover (cf. Lemma \ref{lemma_ShiodaInose}). Lemma \ref{lemma:sp1} and Theorem \ref{thm_2nd-counterexample} show that $(\pi_1)_*:T_{S_t^i}\to T_{X^i}$ is surjective, and by Remark \ref{remark_Morrison}, it scales the intersection pairing by $2$. In particular, the diagram above induces Hodge isometries 
  $$T_{S_t^i}(4)\overset{(\pi_1)_*}{\cong} T_{X^i}(2)\overset{(\pi_2)^*}{\cong} 2T_{Y^i}.$$ Therefore, the Hodge isometry $\Phi$ is equal to $\frac{1}{2}\pi_2^*(\pi_1)_*$, which is induced by $\frac{1}{2}$ the closure of the fibered product of $\pi_1$ and $\pi_2$ inside $S_t^i\times Y^i$.
\end{proof}
In \cite[Theorem~6.3]{Morrison84}, Morrison characterized which K3 surfaces admit a Shioda--Inose structure. Motivated by Proposition \ref{prop_alg-corr}, we conclude with an analogous question for elliptic-elliptic surfaces:


\begin{question}
 Let $\pi\colon S \dashrightarrow X$ be a rational double cover as in Corollary~\ref{cor_ell-ell-by-base-change} (2), where $X$ is an elliptic K3 surface with three star fibers and $S$ is the resulting elliptic--elliptic surface. 
For which such pairs $(S,X)$ does $\pi_*$ induce a Hodge isometry
\[
T_S(2)\ \cong\ T_X ?
\]
\end{question}



\section{Invariant divisor classes}\label{sec_divisor}
Since our counterexamples to the LICT$_{\Z}$ are transcendental classes, one may ask whether divisor classes satisfy the ICT$_{\Z}$ locally or globally. When $q(X_t)=0$, then the answer is yes by Theorem \ref{thm_H^2}. Here we drop this assumption and give another sufficient criterion for surfaces.

\begin{lemma}\label{lemma_ICTZ-div}
    Let $\mathcal X\to B$ be a family of surfaces, where $B$ can be an analytic disk or a smooth projective variety. 
    Suppose for a Zariski open subspace $U\subset B$ and every $t\in U$, $\alpha_t=[Z_t]$ is represented by an irreducible curve $Z_t\subset X_t$ with negative self-intersection, and moreover $\alpha$ is invariant under the monodromy action of $\pi_1(U,t)$. 
 Then there is a divisor $Z\subset\mathcal X$ such that $Z|_{X_t}=Z_t$. In particular, the \textup{ICT}$_{\Z}$ holds for $[Z_t]$. 
\end{lemma}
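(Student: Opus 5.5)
The plan is to take the closure of the family of curves $\{Z_t\}_{t\in U}$ inside $\mathcal X$ and use rigidity of negative curves to see that it really is a family.

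First, fix $t_0\in U$. A curve $Z_{t_0}$ with $Z_{t_0}^2<0$ is the unique effective divisor in its class, so $h^0(X_{t_0},\OO(Z_{t_0}))=1$ and $Z_{t_0}$ is rigid in $X_{t_0}$. Now consider the relative Hilbert scheme (or Douady space) $\mathrm{Hilb}(\mathcal X_U/U)$. Since $\alpha$ is monodromy invariant and each $\alpha_t$ is represented by the curve $Z_t$, the class stays of type $(1,1)$ in every fiber. The standard obstruction argument for curves on surfaces then says that $Z_{t_0}$ deforms to nearby fibers: deforming the line bundle $\OO(Z_{t_0})$ along $U$ is unobstructed because its Chern class remains Hodge, and upper semicontinuity together with $\chi$ gives sections nearby. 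Uniqueness of the effective representative in each $X_t$ shows that the component $H$ of the relative Hilbert scheme containing $[Z_{t_0}]$ maps bijectively onto $U$. This gives a flat family $\mathcal Z_U\subset\mathcal X_U$ with fibers $Z_t$. Globally there is no ambiguity: by uniqueness, $\mathcal Z_U$ is simply the union of the $Z_t$, so it is well defined even though $U$ is not simply connected. That $\mathcal Z_U$ is analytic follows from $H\to U$ being finite and bijective; after normalization it is an isomorphism over $U$, up to possibly shrinking $U$ by a closed analytic subset.

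Second, take $Z$ to be the closure of $\mathcal Z_U$ in $\mathcal X$. When $B$ is projective, work algebraically. The relative Hilbert scheme is algebraic, and $H$ is a quasi-projective component dominating $B$ with degree one, so the corresponding subvariety of $\mathcal X_U$ is algebraic and its Zariski closure is a divisor $Z$ with $Z|_{X_t}=Z_t$ for $t\in U$. When $B$ is a disk, the closure of the analytic hypersurface $\mathcal Z_U$ across the proper analytic subset $B\smallsetminus U$ is analytic by Remmert--Stein, since it is of pure dimension equal to that of $\mathcal X$ minus one and the complement has smaller dimension. One could also use properness of the relative Douady space over $B$ via Bishop's theorem, given bounded volume from the fixed class $\alpha$ and a relative Kähler class.

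Finally, $[Z]\in H^2(\mathcal X,\Z)$ restricts to $[Z_t]=\alpha_t$, which proves the integral lifting. The main obstacle I expect is the first step: showing that $Z_{t_0}$ genuinely extends over a neighborhood in $U$, rather than merely that each fiber has some curve in the class. The hypothesis gives an irreducible negative curve in every fiber, so the real content is analyticity of $t\mapsto Z_t$. I would handle this with the Hilbert scheme and uniqueness argument above. The key inputs are that negative self-intersection forces uniqueness, and that the images of countably many Hilbert components cover $U$, so by Baire one of them dominates $U$.
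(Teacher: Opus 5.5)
Your overall route is the paper's: put the curves $Z_t$ into the relative Hilbert scheme, use that a negative curve is rigid and unique to get a section over $U$, take the universal divisor, and close it up. Using uniqueness of the effective divisor in the class $\alpha_t$ is a legitimate variant and makes the gluing over non-simply-connected $U$ automatic. The paper instead uses $H^0(N_{Z_t/X_t})=0$ to see that $\mathrm{Hilb}(X_t)$ is a reduced point at $[Z_t]$, takes local sections, and patches them by monodromy invariance. However, two of your justifications are wrong as written.

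\emph{The deformation paragraph.} Your argument for deforming $\OO(Z_{t_0})$ fails and should be dropped. Upper semicontinuity bounds $h^0$ from above, not below. Riemann--Roch gives $\chi(\OO_X(Z))=\chi(\OO_X)+\tfrac12 Z\cdot(Z-K)$, and this vanishes for exactly the curves used in Corollary~\ref{cor_ICTZ-divisor}: on an elliptic--elliptic surface $\chi(\OO)=1$ and $K\equiv F$, so both the $(-2)$-fiber components and the $(-1)$-sections have $\chi=0$. With $q=1$ the line bundle also has a whole $\mathrm{Pic}^0$ of deformations, only one of which has a section. Finally, $(-2)$-curves are genuinely obstructed, since $H^1(N_Z)=H^1(\OO_{\PP^1}(-2))\neq 0$.

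So the existence of $Z_t$ can only come from the hypothesis, and what you must prove is analyticity of $t\mapsto Z_t$. Your uniqueness idea does this cleanly once you use properness. The divisors of class $\alpha$ form an open and closed piece $H$ of $\mathrm{Hilb}^P(\mathcal X_U/U)$, which is proper over $U$ for a projective family. By uniqueness and the hypothesis, $H\to U$ is bijective and proper, hence finite. Since $U$ is smooth, this gives $H_{\mathrm{red}}\cong U$. No Baire argument or shrinking of $U$ is needed.

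\emph{The closure step.} Remmert--Stein does not apply here. The set you must extend across is $f^{-1}(B\smallsetminus U)$, a union of fibers of dimension $2=\dim\mathcal Z_U$, not something of smaller dimension. The constant fiber volume $\alpha\cdot[\omega]$ also does not, by itself, bound the volume of $\mathcal Z_U$ near the central fiber, so Bishop's extension theorem cannot be applied directly to $\mathcal Z_U$ either.

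Your fallback is the correct argument. Take the closure of $H$ in the relative Hilbert (or Douady/cycle) space over all of $B$. It is proper over $B$, either because the Hilbert polynomial is fixed or by Bishop--Fujiki compactness for bounded fiber volume, and it is analytic, being an irreducible component. The image in $\mathcal X$ of the universal family over it is then analytic by the proper mapping theorem. $Z$ is the union of its components meeting $\mathcal X_U$. In the projective-base case your Zariski-closure argument is fine as stated.
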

\begin{proof}
   The assumption $Z_t^2<0$ implies that $H^0(N_{Z_t|X_t})=0$ and the Hilbert scheme $\textup{Hilb}(X_t)$ is reduced and is 0-dimensional at $Z_t$. For each point $t\in U$, there is an analytic neighborhood $V_t$ of $t$ such that $\sigma_t:t\mapsto Z_t$ defines a section of the relative Hilbert scheme $\textup{Hilb}(\mathcal X_{V_t}/V_t)$. The assumption that the class $\alpha_t=[Z_t]$ is monodromy-invariant implies that $\{\sigma_t\}$ patches to a global section $\sigma$ on $\textup{Hilb}(\mathcal X_{U}/U)$. The universal correspondence defines a divisor $Z_U\subset\mathcal X_U$ such that $Z_U|_{X_t}=Z_t$. Finally, we take the closure to obtain a divisor $Z$ on $\mathcal X$.
\end{proof}

\begin{corollary} \label{cor_ICTZ-divisor}
   The \textup{ICT}$_{\Z}$ for divisor classes holds for the semistable families of elliptic surfaces in Theorems \ref{main_thm} and \ref{intro_thm-failureGICTZ}. 
\end{corollary}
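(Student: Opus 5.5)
The plan is to apply Lemma \ref{lemma_ICTZ-div} to a $\Z$-basis of the Néron--Severi lattice of the general fiber $S_t$, or at least to a set of curve classes generating $NS(S_t)$ over $\Z$. Each such class has to be represented by an irreducible curve of negative self-intersection, and it has to be monodromy invariant.

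First I will write down explicit curve classes generating $NS(S_t)$. By Lemma \ref{lemma_ell-ell-fibers}, $S_t\to E_t$ has singular fibers of type $IV^*$ and $IV$ (coming from the $II^*$ and $IV^*$ fibers of $X$), an $I_0$ fiber over the $I_0^*$ point, and Mordell--Weil rank $2$. I take the following candidates.
\begin{itemize}
\item The zero section $O$, which has $O^2=-\chi=-1$.
\item The non-identity components of the $IV^*$ and $IV$ fibers, each a $(-2)$-curve.
\item Sections $P_1,P_2$ generating the Mordell--Weil group. Every section of an elliptic-elliptic surface has self-intersection $-1$, so these are again negative curves.
\end{itemize}
The fiber class itself is not negative. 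However $F$ is a positive combination of the components of the $IV^*$ fiber, and those components are all negative curves (including the component meeting $O$). Hence all fiber components, the zero section and the Mordell--Weil generators give negative irreducible curves. By the Shioda--Tate decomposition (the standard integral version, in which sections together with fiber components generate $NS$), these classes generate $NS(S_t)$ over $\Z$.

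Second, I need monodromy invariance over $U$. The fiber components and $O$ come from the fixed K3 surface $X$: they are pullbacks of the zero section and of the components of the fixed star fibers, and by the proof of Lemma \ref{lemma_smoothfamily} monodromy acts trivially on these configurations. For the Mordell--Weil sections I would argue that they also arise from $X$. Via the base change, $S_t$ receives the $\pi^*$ images of sections of $X\to\PP^1$; alternatively, after a finite base change the finite monodromy of $\mathcal{NS}$ (Proposition \ref{prop_trivialVHS}) becomes trivial. Locally, for the LICT, I can pass to the semistable model after base change, where the monodromy on $H^2$ is trivial anyway. Then every class of $NS(S_t)$ is invariant, and Lemma \ref{lemma_ICTZ-div} applies generator by generator. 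The resulting divisors $Z\subset\mathcal X$ give lifts, and since the lifting is additive, all of $NS(S_t)^{\pi_1}$ lifts. This settles the local statement. For the global statement on the family of Theorem \ref{intro_thm-failureGICTZ} I argue in the same way, but now with $\pi_1(U,t)$-invariant classes. Because $NS(S_t)$ is spanned by negative curves, any finite monodromy permutes them. An invariant class is then a sum over orbits, and each orbit sum is the restriction of the closure of the divisor swept out by that orbit. This is a variant of the Hilbert scheme argument applied to the union of the orbit.

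The main obstacle is the integral generation step: showing that sections and fiber components generate $NS(S_t)$ over $\Z$ and not merely over $\Q$, and that the full lattice is spanned by negative irreducible curves. I would first check the discriminant. The trivial lattice $U\oplus E_6\oplus A_2$ has discriminant $9$, and the Mordell--Weil lattice contribution must make $\disc NS=3$. This confirms that the sublattice generated is all of $NS$, which I would verify using the unimodular-free statement $NS=\langle O,F,\text{fiber comps},\text{sections}\rangle$, a result that holds for any elliptic surface with a section.
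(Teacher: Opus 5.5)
Your local argument is the paper's argument. The $(-2)$-curves in the $IV^*$ and $IV$ fibers and the sections (each of self-intersection $-\chi(\OO_{S_t})=-1$) generate $NS(S_t)$ over $\Z$, because $NS(S_t)/\mathrm{Triv}\cong MW$; so use a full generating set of $MW$, torsion included, and drop the discriminant check, which is circular and adds nothing. On a semistable model the monodromy on $H^2$ is finite and unipotent, hence trivial, and Lemma~\ref{lemma_ICTZ-div} is applied generator by generator.

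Two of your auxiliary invariance claims are wrong and should be dropped.
\begin{enumerate}
\item Pulling back sections of $X$ yields only $O$. By Shioda--Tate ($20=2+8+6+4$), $X\to\PP^1$ has Mordell--Weil rank $0$, and it has no torsion because it has a $II^*$ fiber. So the rank-$2$ part of $MW(S_t)$ does not come from $X$.
\item The proof of Lemma~\ref{lemma_smoothfamily} only shows that the twelve exceptional curves, which lie in the ramification locus, are monodromy-fixed. Components over which the double cover splits are exchanged by the local covering involution. An example is the two arms of the $IV^*$ fiber not meeting $O$, which come from the even-multiplicity end chain of $II^*$. This exchange is what the monodromy does as $\lambda(t)$ circles $0$.
\end{enumerate}
So invariance comes only from semistable reduction or base change, as you eventually say.

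The genuine gap is in the global case. The paper simply asserts that the global monodromy of these semistable families is trivial, so every class is invariant. You instead allow a finite monodromy group $G$. You then infer that, because $G$ permutes a generating set of negative curves, every invariant class is a sum of orbit sums.

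That inference fails. $NS(S_t)$ is only a $G$-equivariant quotient of the permutation module on those curves, since the curves satisfy relations: $F$ is a combination of the components of either reducible fiber, and addition in $MW$ is not addition of classes. Invariants of a quotient need not lift. For example, let $G=\Z/2$ act on $L=\Z^4/\Z(b_1-b_2-b_3+b_4)$ by $b_1\leftrightarrow b_2$, $b_3\leftrightarrow b_4$. The class of $b_2+b_3$ is invariant, but only its double lies in the span of the orbit sums $b_1+b_2$ and $b_3+b_4$.

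You can close the gap in either of two ways.
\begin{enumerate}
\item Arrange trivial monodromy as the paper does, e.g.\ by base changing along the Galois cover that kills the finite monodromy of $\mathcal{NS}$ before semistable reduction.
\item Supply the missing structure. $G$ fixes $O$, $F$ and the identity components, and permutes the remaining components. So $\mathrm{Triv}$ is a permutation lattice on the basis formed by $O$, $F$ and the non-identity components. Since $F=\Theta_0+\sum m_i\Theta_i$ with $\Theta_0$ fixed and $m_i$ constant on orbits, $\mathrm{Triv}^G$ is spanned by orbit sums of negative curves. Moreover, for $x\in NS(S_t)^G$ its image $R\in MW^G$ is a $G$-fixed section, so $x-[R]\in\mathrm{Triv}^G$.
\end{enumerate}
With the second fix, your orbit-sum argument proves the statement for any semistable model, without needing the global monodromy to be trivial.
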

\begin{proof}
  For these semistable families, the global monodromy is trivial, so every class in $NS(S_t)$ is invariant. To find generators with negative self-intersection, recall that $S_t$ is an elliptic-elliptic surface which has singular fibers of types $IV^*,IV$. The irreducible components of those fibers have self-intersection $-2$, and any section curves have self-intersection $-1$. These generate $NS(S_t)$ by the Shioda-Tate formula, so the claim follows from Lemma \ref{lemma_ICTZ-div}.
\end{proof}

\bibliographystyle{alpha}
\bibliography{LICTZ}

\end{document}